\date{}
\numberwithin{equation}{section} \theoremstyle{plain}
\newtheorem{theorem}{Theorem}[section]
\newtheorem{corollary}[theorem]{Corollary}
\newtheorem{lemma}[theorem]{Lemma}
\newtheorem{proposition}[theorem]{Proposition}
\theoremstyle{definition}
\newtheorem{definition}[theorem]{Definition}
\newtheorem{example}[theorem]{Example}
\newtheorem{remark}[theorem]{Remark}
\newtheorem*{ack*}{ACKNOWLEDGEMENTS}
\newcommand{\pf}{\noindent\begin {proof}}
\newcommand{\epf}{\end{proof}}
\newcommand{\Z}{\mathbb{Z}}
\newcommand{\Ext}{{\rm Ext}}
\newcommand{\Hom}{{\rm Hom}}
\newcommand{\Der}{\mathbf{D}}
\newcommand{\mm}{\mathfrak{m}}
\newcommand{\pp}{\mathfrak{p}}
\newcommand{\Spec}{\mathrm{Spec}}
\newcommand{\mSpec}{\mathrm{mSpec}}
\newcommand{\RHom}{\mathbf{R}\Hom}
\newcommand{\ModR}{\mathrm{Mod}\text{-}R}
\newcommand{\ModRm}{\mathrm{Mod}\text{-}R_\mm}
\newcommand{\InjR}{\mathrm{Inj}\text{-}R}
\newcommand{\Ucal}{\mathcal{U}}
\newcommand{\Vcal}{\mathcal{V}}
\newcommand{\Ccal}{\mathcal{C}}
\newcommand{\Fcal}{\mathcal{F}}
\newcommand{\Tcal}{\mathcal{T}}
\newcommand{\Lcal}{\mathcal{L}}
\newcommand{\Hcal}{\mathcal{H}}
\newcommand{\Mcal}{\mathcal{M}}
\newcommand{\Ical}{\mathcal{I}}
\newcommand{\Scal}{\mathcal{S}}
\newcommand{\Ecal}{\mathcal{E}}
\newcommand{\Kcal}{\mathcal{K}}
\newcommand{\Bcal}{\mathcal{B}}
\newcommand{\Ocal}{\mathcal{O}}
\newcommand{\Supp}{\mathrm{Supp}}
\newcommand{\Sub}{\mathrm{Sub}}
\newcommand{\Prod}{\mathrm{Prod}}
\newcommand{\Cogen}{\mathrm{Cogen}}
 \title{Gluing compactly generated t-structures over stalks of affine schemes}
 \author{Michal Hrbek}
 \address[M. Hrbek]{Institute of Mathematics of the Czech Academy of Sciences, \v{Z}itn\'{a} 25, 115 67 Prague, Czech Republic}
 \email{hrbek@math.cas.cz}
 \author{Jiangsheng Hu}
 \address[J. Hu]{School of Mathematics and Physics, Jiangsu University of Technology, Changzhou 213001, China}
 \email{jiangshenghu@jsut.edu.cn}
 \author{Rongmin Zhu} 
 \address[R. Zhu]{Department of Mathematics, Nanjing University, Nanjing 210093, China}
 \email{rongminzhu@hotmail.com}
 \thanks{Michal Hrbek was supported by the GAČR project 20-13778S and RVO: 67985840. Jiangsheng Hu was supported by NSFC grant 11771212. Rongmin Zhu was supported by NSFC grant 11771202.}
 \keywords{Derived category, Thomason set, telescope conjecture, silting complex, cosilting complex.}
 \subjclass[2020]{Primary: 13D09, 18G80; Secondary: 13D30, 13B30, 13C05}
\begin{document}
\begin{abstract}
We show that compactly generated t-structures in the derived category of a commutative ring $R$ are in a bijection with certain families of compactly generated t-structures over the local rings $R_\mm$ where $\mm$ runs through the maximal ideals in the Zariski spectrum $\Spec(R)$. The families are precisely those satisfying a gluing condition for the associated sequence of Thomason subsets of $\Spec(R)$. As one application, we show that the compact generation of a homotopically smashing t-structure can be checked locally over localizations at maximal ideals. In combination with a result due to Balmer and Favi, we conclude that the $\otimes$-Telescope Conjecture for a quasi-coherent and quasi-separated scheme is a stalk-local property. Furthermore, we generalize the results of Trlifaj and \c{S}ahinkaya and establish an explicit bijection between cosilting objects of cofinite type over $R$ and compatible families of cosilting objects of cofinite type over all localizations $R_\mm$ at maximal primes.
\end{abstract}
\maketitle

\section {Introduction}
The notion of a t-structure was introduced by Be\u{ı}linson, Bernstein, and Deligne \cite{BBD} in their study of perverse sheaves on an algebraic or analytic variety as a tool for constructing cohomological functors. Later, t-structures turned out to be a natural framework for tilting theory of triangulated categories, see \cite{li16}, \cite{li14} and \cite{ni19}. Such t-structures usually satisfy some kind of finiteness condition, see e.g. \cite{ma18}. The compactly generated t-structures have been studied in depth and in some cases are known to allow for a full classification. For derived categories of commutative noetherian rings, a bijective correspondence between compactly generated t-structures and filtrations of the Zariski spectrum by supports was established by Alonso Tarrío, Jeremías López and Saorín \cite{ajs10}. This was further generalized by the first author to arbitrary commutative rings \cite{hr20} using filtrations by Thomason sets.

Silting theory can be viewed as an adaptation of tilting theory to triangulated categories, and the modern versions of silting theory rely heavily on the notion of a t-structure. Indeed, any cosilting object $C$ is up to equivalence determined by the cosilting t-structure $(^{\perp_{\leq0}}C, {^{\perp_{>0}}C})$. A strong relation between cosilting t-structures and compactly generated t-structures follows from a result by Laking \cite[Theorem 4.6]{la19}. In particular, any compactly generated t-structure is induced by a cosilting object if and only if it is non-degenerate. As a consequence, we call a cosilting object $C$ of \emph{cofinite type} if the t-structure induced by it is compactly generated, see \cite{lihrbek}. Such cosilting objects are abundant, for example, any bounded cosilting complex over a commutative noetherian ring is of cofinite type \cite[Corollary 2.14]{hn20}. Recently, Trlifaj and \c{S}ahinkaya \cite{tr14} constructed a bijective correspondence between (equivalence classes of) n-cotilting modules over a commutative noetherian ring $R$ and (equivalence classes of) compatible families of their colocalizations in all maximal ideals of $R$. This result is the starting point of our inquiry to gluing properties of compactly generated t-structures. The notion of colocalization in $\ModR$ is due to Melkerson and Schenzel (cf. \cite[p.118]{xu1996}) with similar constructions already used in \cite{ca1956}, and a derived version of colocalization will play an essential role in our approach as well.

A Bousfield localization of a triangulated category is called smashing if it commutes with all coproducts. The Telescope Conjecture (TC) originates from the work of Ravenel in algebraic topology \cite{Ra1984} and asks whether any such smashing localization is generated by compact objects. It is a landmark result of Neeman \cite{ne1992} that (TC) holds in the derived category $\Der(R)$ of a commutative noetherian ring $R$. On the other hand, Keller \cite{ke1994} established examples of non-noetherian commutative rings for which (TC) fails. For non-stable t-structures, a generalization of the smashing property was introduced by Saorín, Šťovíček and Virili \cite{ssv17}. This class of homotopically smashing t-structures encompasses both the smashing Bousfield localizations and the t-structures induced by pure-injective cosilting objects, this follows from the work of Krause \cite{kr2000} and Laking \cite{la18}. Recently, the first author and Nakamura showed in \cite{hn20} that any homotopically smashing t-structure in the derived category of a commutative noetherian ring is compactly generated,
which generalizes the validity of (TC) for commutative noetherian rings.
As a consequence, we say that the derived category $\Der(R)$ of a (not necessarily noetherian) ring $R$ satisfies the Semistable Telescope Conjecture (STC) if any homotopically smashing t-structure is compactly generated.

The aim of this paper is to glue compactly generated t-structures and cosilting objects of cofinite type over all (co)localizations at maximal ideals, and to study the stalk-local properties of the (Semistable) Telescope Conjecture. For this purpose, we use the description of these t-structures of \cite{ajs10} and \cite{hr20} by geometric invariants and first introduce the ``compatibility'' condition for the family $\{\mathbb{X}(\mm) \mid \mm \in \mSpec(R)\}$ of Thomason filtrations, which we then demonstrate to correspond precisely to the case in which this collection naturally glues over the cover of $\Spec(R)$ by the subsets homeomorphic to $\Spec(R_\mm)$ (see Definition \ref{def:compatible-of-Thomason} and Proposition \ref{prop_corr_thom}). We remark that our gluing condition Definition \ref{def:compatible-of-Thomason} is nothing but a suitable generalization of the condition used in \cite{tr14} to the setting of rings which are not necessarily noetherian, see Remark~\ref{rem_noeth}.

Let us briefly list the highlights of the present paper, which are all obtained using the gluing technique described above.

\begin{enumerate}
    \item[(1)] In Theorem \ref{thm-compact-t-structure} we glue (non-degenerate) compactly generated t-structures over all localizations at maximal ideals which is based on the gluing of the corresponding Thomason filtrations via Proposition \ref{prop_corr_thom}. More  specifically, it is proved that there is a bijective correspondence between (non-degenerate) compactly generated t-structures $(\Ucal,\Vcal)$ in $\Der(R)$ and compatible families $\{(\Ucal(\mm),\Vcal(\mm)) \mid \mm \in \mSpec(R)\}$ of (non-degenerate) compactly generated t-structures.

\item[(2)] In Theorem \ref{thm_tc_loc} we obtain a stalk-local criterion for (Semistable) Telescope Conjecture by applying the local-global property of compact generation established in Proposition \ref{prop_cg_loc}. More precisely, for any commutative ring $R$, it is proved that (Semistable) Telescope Conjecture holds in $\Der(R)$ if and only if (Semistable) Telescope conjecture holds in $\Der(R_\mm)$ for any maximal ideal $\mm$ of $R$. One corollary is that both (TC) and (STC) hold for any commutative ring $R$ all of which stalks $R_\mm$ are noetherian. Examples of such rings include non-noetherian rings like von Neumann regular rings, recovering a result of Bazzoni-Šťovíček, see \cite[\S 7]{bs17}. Our result also has consequences for non-affine schemes. Indeed, in combination with the result of Balmer and Favi we obtain that the $\otimes$-Telescope Conjecture ($\otimes$TC) for a quasi-coherent and quasi-separated scheme is not just affine-local, but even a stalk-local property, see \S\ref{ss_stalk}.

\item[(3)] We establish a bijective
correspondence between cosilting objects in $\Der(R)$ of cofinite type up to equivalence and compatible families $\{C(\mm) \mid \mm \in \mSpec(R)\}$ of cosilting objects of cofinite type up to equivalence (see Theorem \ref{thm4.17}). We give applications to pure-injective cosilting objects, $n$-term cosilting objects, cotilting modules, and cosilting modules over commutative noetherian rings (see Corollaries \ref{cor_result_noeth}, \ref{cor:cotilting-correspond} and \ref{corollary:cosilting-correspond}). It should be noted that our correspondence here restricts to one of \cite[Corollary 3.6]{tr14}, and the notions of equivalence and compatible condition on families here restrict perfectly well to the notions used in \cite{tr14}. In Section~\ref{S:silting}, we obtain a similar result for silting objects under slightly stronger assumptions. Unlike in the cosilting setting however, the gluing of local silting object to form a global one is not explicit, see Remark~\ref{rem_silting_coproduct}.
\end{enumerate}

\textbf{Notation.} All subcategories are always considered to be full and closed under isomorphisms. Complexes are written using the cohomological notation, meaning that the degree increases in the direction of differential maps. The $n$-th power of the suspension functor of any triangulated category will be denoted as $[n]$ for $n \in \Z$.

\section{Preliminaries}\label{S:prelim}
In this section, $\Tcal$ will always denote a compactly generated triangulated category underlying a Grothendieck derivator. We refer to reader to \cite[Appendix]{hn20} for a source of references and basic terminology about such categories well-suited for our objectives. In particular, this assumption implies that $\Tcal$ has all set-indexed products and coproducts. The enrichment of the Grothendieck derivator allows for computing of homotopy limits and colimits. In fact, we will be mostly interested in the case when $\Tcal = \Der(R)$ is the unbounded derived category of the module category $\ModR$ of a  (not necessarily noetherian) commutative ring $R$.
Also, the only homotopy construction we will be interested in computing is that of a \emph{directed homotopy colimits}. In the case of $\Der(R)$, the \emph{directed homotopy colimits} are precisely the direct limits (= directed colimits) constructed in $\mathbf{C}(R)$, the Grothendieck category of cochain complexes of $R$-modules. In particular, a subcategory $\Ccal$ of $\Der(R)$ is \emph{closed under directed homotopy colimits} if it is closed under direct limits computed in $\mathbf{C}(R)$.

Recall that an object $S \in \Tcal$ is \emph{compact} if the covariant functor $\Hom_{\Tcal}(S,-): \Der(R) \rightarrow \mathrm{Ab}$ preserves coproducts. The symbol $\Tcal^c$ will denote the subcategory of $\Tcal$ consisting of all compact objects of $\Tcal$. Recall that an object $S \in \Der(R)$ belongs to $\Der(R)^c$ if and only if $S$ is isomorphic in $\Der(R)$ to a bounded complex of finitely generated projective $R$-modules.

\subsection{t-structures}
 A \emph{t-structure} is a pair $(\Ucal,\Vcal)$ of full subcategories of $\Tcal$ which satisfy the following axioms:
\begin{enumerate}
    \item[(t1)] $\Hom_{\Tcal}(\Ucal,\Vcal) = 0$,
    \item[(t2)] $\Ucal[1] \subseteq \Ucal$ (and $\Vcal[-1] \subseteq \Vcal$),
    \item[(t3)] for any $X \in \Tcal$ there is a triangle
            $$U \rightarrow X \rightarrow V \rightarrow U[1]$$
            with $U \in \Ucal$ and $V \in \Vcal$.
\end{enumerate}

The class $\Ucal$ is called the \emph{aisle} and $\Vcal$ is called the \emph{coaisle} of the t-structure. We recall the well-known fact that the triangle in axiom (t3) is uniquely determined and functorial --- indeed, it follows from the axioms that $\Ucal$ is a coreflective subcategory of $\Tcal$ and the map $U \rightarrow X$ is precisely the $\Ucal$-coreflection of $X$, see \cite{kv1988}. The dual statement is valid for the coaisle $\Vcal$ and the map $X \rightarrow V$ as well --- in particular, $\Vcal$ is a reflective subcategory of $\Tcal$.

A t-structure $(\Ucal,\Vcal)$ is called:
\begin{itemize}
    \item \emph{stable} if $\Ucal[-1] \subseteq \Ucal$ (or equivalently, $\Vcal[1] \subseteq \Vcal$);
    \item \emph{non-degenerate} if $\bigcap_{n \in \Z}\Ucal[n] = 0$ and $\bigcap_{n \in \Z}\Vcal[n] = 0$.
\end{itemize}

The aisles of stable t-structures are precisely the kernels of Bousfield localization functors. The non-degeneracy condition holds precisely when the cohomological functor induced by the t-structure detects zero object. Clearly, the two conditions are mutually exclusive whenever $\Tcal$ contains non-zero objects.
\subsection{Purity and definable subcategories in triangulated setting}
We briefly recall the theory of purity in $\Tcal$, first introduced by Beligiannis \cite{be2000} and Krause \cite{kr2000}. We call a map $f$ in $\Tcal$ a \emph{pure monomorphism} (resp. \emph{pure epimorphism}) provided that $\Hom_{\Tcal}(S,f)$ is a monomorphism (resp. an epimorphism)  for any $S \in \Tcal^c$. Note that, in the triangulated world, a pure monomorphism does not have to be a categorical monomorphism. An object $E \in \Tcal$ is called \emph{pure-injective} provided that any pure monomorphism starting in $E$ is a split monomorphism in $\Tcal$. Let $\Ccal$ be a full subcategory of $\Tcal$. We say that $\Ccal$ is closed under pure monomorphisms if for any pure monomorphism $f: X \rightarrow Y$ with $Y \in \Ccal$ we also have $X \in \Ccal$ and we define the analogous notion of subcategory closed under pure epimorphisms similarly.

A subcategory $\Ccal$ is \emph{definable} provided that there is a set $\Phi$ of maps between objects of $\Tcal^c$ such that $\Ccal = \{X \in \Tcal \mid \Hom_R(f,X) \text{ is surjective for any $f \in \Phi$}\}$. Similarly to their more classical counterparts in module categories, definable subcategories of $\Tcal$ can be characterized by their closure properties. Indeed, we have the following result due to Laking and Laking-Vit\'{o}ria.
\begin{theorem}\emph{(\cite[Theorem 3.11]{la18}, \cite[Theorem 4.7]{la19})}
    The following are equivalent for a subcategory $\Ccal$ of $\Tcal$:
    \begin{enumerate}
        \item[(i)] $\Ccal$ is definable,
        \item[(ii)] $\Ccal$ is closed under products, pure monomorphisms, and pure epimorphisms.
        \item[(iii)] $\Ccal$ is closed under products, pure monomorphisms, and directed homotopy colimits.
    \end{enumerate}
\end{theorem}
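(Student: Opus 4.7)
The plan is to route everything through the restricted Yoneda embedding
$y\colon \Tcal \to \mathrm{Mod}\text{-}\Tcal^c$, $X \mapsto \Hom_{\Tcal}(-,X)|_{\Tcal^c}$, which acts as a bridge between the triangulated notion of purity and classical purity in a locally coherent Grothendieck abelian category. The key background facts I would invoke are: $y$ preserves products; by construction $y$ sends a map $f$ to a monomorphism (resp.\ epimorphism) in $\mathrm{Mod}\text{-}\Tcal^c$ precisely when $f$ is a pure monomorphism (resp.\ pure epimorphism) in $\Tcal$; and under the Grothendieck derivator enrichment, $y$ converts directed homotopy colimits in $\Tcal$ into ordinary directed colimits of modules. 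On the module-category side, one then has the classical Crawley--Boevey characterization: a subcategory of $\mathrm{Mod}\text{-}\Tcal^c$ is definable (in the sense of being cut out by vanishing of a set of coherent functors) if and only if it is closed under products, pure subobjects, and directed colimits.

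For (i)$\Rightarrow$(ii) and (i)$\Rightarrow$(iii), I would argue directly. Suppose $\Ccal = \{X \mid \Hom_{\Tcal}(f,X) \text{ surjective for all } f \in \Phi\}$ with $\Phi$ a set of maps in $\Tcal^c$. Closure under products is immediate from $\Hom_{\Tcal}(f,\prod_i X_i) \cong \prod_i \Hom_{\Tcal}(f,X_i)$ combined with the fact that a product of surjections is surjective. For a pure monomorphism $g\colon X \to Y$ with $Y \in \Ccal$, given any $f\colon A \to B$ in $\Phi$ and any $\alpha\colon A \to X$, lift $g\alpha$ to $\beta\colon B \to Y$; then $\beta$ factors through $g$ because $y(g)$ is mono and $A,B$ are compact, producing the required extension of $\alpha$ along $f$. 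Closure under pure epimorphisms and under directed homotopy colimits is established analogously by transporting the condition through $y$ and using that directed colimits in $\mathrm{Mod}\text{-}\Tcal^c$ are exact.

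For (ii)$\Rightarrow$(i) and (iii)$\Rightarrow$(i), the idea is to transport $\Ccal$ through $y$ and apply the Crawley--Boevey theorem. Let $\Dcal$ be the closure of $y(\Ccal)$ under isomorphism in $\mathrm{Mod}\text{-}\Tcal^c$. Under assumption (ii), the image $\Dcal$ is closed under products (since $y$ preserves products), under pure subobjects (by closure of $\Ccal$ under pure monos together with the fact that pure subobjects of representable-valued functors correspond to pure monos in $\Tcal$), and, by an elementary argument, under directed colimits: a pure epimorphism in $\mathrm{Mod}\text{-}\Tcal^c$ with codomain in $\Dcal$ lifts to a pure epimorphism in $\Tcal$ whose domain then lies in $\Ccal$, and one deduces closure of $\Dcal$ under directed colimits via the fact that every such colimit is a pure quotient of the coproduct of its terms. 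Under (iii) the directed-colimit closure of $\Dcal$ is instead seen directly from the preservation of directed homotopy colimits by $y$. Hence $\Dcal$ is definable in $\mathrm{Mod}\text{-}\Tcal^c$, so there is a set $\Phi$ of maps of finitely presented objects of $\mathrm{Mod}\text{-}\Tcal^c$ whose induced Hom-functors vanish on $\Dcal$; since finitely presented objects of $\mathrm{Mod}\text{-}\Tcal^c$ are cokernels of maps in $\Tcal^c$, this $\Phi$ can be realized as a set of maps in $\Tcal^c$ and yields the definability of $\Ccal$.

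The main obstacle is the faithful translation of pure structure across $y$: one must verify that every pure subobject (resp.\ pure quotient) of an object of the form $y(Y)$ in $\mathrm{Mod}\text{-}\Tcal^c$ actually arises as $y$ applied to a pure monomorphism (resp.\ pure epimorphism) in $\Tcal$, so that the closure properties of $\Ccal$ can be transferred to $\Dcal$ without loss. Closely tied to this is the triangulated analogue of the Govorov--Lazard theorem, namely that every pure epimorphism in $\Tcal$ is a directed homotopy colimit of split epimorphisms; this is exactly the ingredient that makes (ii) and (iii) interchangeable and is the place where the Grothendieck derivator assumption on $\Tcal$ is genuinely used.
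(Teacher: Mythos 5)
The first thing to note is that the paper does not prove this statement at all: it is quoted verbatim from Laking \cite{la18} and Laking--Vit\'{o}ria \cite{la19}, so there is no internal argument to compare against. Your outline does reproduce the broad strategy of those references --- push everything through the restricted Yoneda functor $y\colon \Tcal \to \mathrm{Mod}\text{-}\Tcal^c$ and invoke the classical characterization of definable subcategories on the abelian side --- and your argument for (i)$\Rightarrow$(ii) and (i)$\Rightarrow$(iii) is complete and correct.

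For the converse implications, however, there is a genuine gap, and you have in fact located it yourself without closing it. The step ``a pure epimorphism in $\mathrm{Mod}\text{-}\Tcal^c$ with codomain in $\mathcal{D}$ lifts to a pure epimorphism in $\Tcal$'' is not available as stated: $y$ is neither full nor essentially surjective, and its essential image need not be closed under pure quotients (nor under all pure subobjects), so the closure properties of $\Ccal$ inside $\Tcal$ do not transfer to the isomorphism closure of $y(\Ccal)$ by this route; consequently the application of the Crawley--Boevey criterion to $\mathcal{D}$, and the identification $\Ccal = y^{-1}(\mathcal{D})$ that you implicitly need at the end, are both unsupported. The actual proofs get around this by reducing to pure-injective objects, on which $y$ restricts to an equivalence onto the injectives of $\mathrm{Mod}\text{-}\Tcal^c$, and, for the interchangeability of (ii) and (iii), by establishing the triangulated Govorov--Lazard-type statement you mention in your last paragraph; that statement is the main theorem of \cite{la18}, it is where the derivator enhancement is genuinely used, and it is not elementary. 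Since you only name these two ingredients as ``obstacles'' rather than supplying them, what you have is a correct roadmap of the cited proofs rather than a proof: the decisive content of the theorem sits exactly in the steps you defer.
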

\subsection{Homotopically smashing t-structures}
A t-structure $(\Ucal,\Vcal)$ is called \emph{smashing} if the coaisle $\Vcal$ is closed under coproducts. The aisles of stable smashing t-structures are precisely the kernels of \emph{smashing localization functors} of $\Tcal$, see \cite[A.5]{hn20}. For t-structures which are non-stable, a stronger condition is often needed, here we follow \cite{ssv17}. A t-structure $(\Ucal,\Vcal)$ is called \emph{homotopically smashing} if $\Vcal$ is closed under directed homotopy colimits. For stable t-structures, this is equivalent to the smashing property \cite{kr2000}. A priori, this is a weaker condition than requiring the coaisle $\Vcal$ to be a definable subcategory. However, a recent result due to Saorín and Šťovíček \cite{ss20} shows that at least in the algebraic setting (in particular, in the case $\Tcal = \Der(R)$), these two conditions coincide. Furthermore, Angeleri-H\"{u}gel, Marks, and Vitória \cite{li17} showed that coaisles of such t-structures are fully determined by their closure properties.

Recall that a subcategory $\Ccal$ of $\Tcal$ is \emph{suspended} (respectively, \emph{cosuspended}) if $\Ccal$ is closed under extensions and $\Ccal[1]\subseteq \Ccal$ (respectively, $\Ccal[-1]\subseteq \Ccal$). Then we can summarize the two above mentioned results about homotopically smashing t-structures in derived categories.
\begin{theorem}\emph{\cite{ss20,li17}}
The following conditions are equivalent for a subcategory $\Vcal$ of $\Der(R)$:
\begin{enumerate}
    \item [(i)] $\Vcal$ is the coaisle of a homotopically smashing t-structure $(\Ucal,\Vcal)$,
    \item [(ii)] $\Vcal$ is definable and cosuspended.
\end{enumerate}
\end{theorem}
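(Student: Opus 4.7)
The plan is to treat the two implications separately. For (i) $\Rightarrow$ (ii), I would first collect the closure properties of the coaisle $\Vcal$ that come for free: since $\Vcal$ is reflective (as the right half of a t-structure), it is closed under all limits and in particular under products; the cosuspended condition $\Vcal[-1]\subseteq\Vcal$ is exactly axiom (t2); and closure under directed homotopy colimits is the homotopically smashing hypothesis. To upgrade these to definability via the Laking and Laking--Vit\'oria characterization stated just above, what is missing is closure under pure monomorphisms, and for this I would cite the main technical input of \cite{ss20}. The strategy there is to represent an arbitrary pure monomorphism $f\colon X\to Y$ with $Y\in\Vcal$ as a suitable directed homotopy colimit of split monomorphisms out of compact objects, and then feed the approximation triangle $U\to X\to V\to U[1]$ into that representation; the closure of $\Vcal$ under products and directed homotopy colimits combined with $U\in{}^{\perp}\Vcal$ forces $U=0$, hence $X\cong V\in\Vcal$. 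This step is where the algebraic nature of $\Der(R)$, encoded in the Grothendieck derivator enhancement, enters in an essential way.

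For (ii) $\Rightarrow$ (i), I would form the candidate aisle by left orthogonality,
\[
\Ucal:=\{X\in\Der(R)\mid \Hom_{\Der(R)}(X,V)=0\text{ for all }V\in\Vcal\}.
\]
Axiom (t1) is immediate from the definition, and $\Ucal[1]\subseteq\Ucal$ follows formally from $\Vcal[-1]\subseteq\Vcal$ using $\Hom_{\Der(R)}(X[1],V)\cong\Hom_{\Der(R)}(X,V[-1])$. The substance of the direction is axiom (t3), i.e.\ the existence of approximation triangles, and this is exactly what Angeleri H\"ugel--Marks--Vit\'oria \cite{li17} provide: a cosuspended subcategory of $\Der(R)$ that is closed under products and directed homotopy colimits admits the required approximation triangles and is therefore a genuine coaisle. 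Both hypotheses are delivered by definability, so this applies and $(\Ucal,\Vcal)$ is a t-structure with coaisle $\Vcal$. The closure of $\Vcal$ under directed homotopy colimits, which was already part of being definable, is then precisely the homotopically smashing condition, so nothing further is required.

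The hard part is clearly the pure-monomorphism closure in (i) $\Rightarrow$ (ii). This is the deep input from \cite{ss20} and is the only step that does not reduce to a formal manipulation of the t-structure axioms plus the Laking--Laking--Vit\'oria characterization of definable subcategories. It is also the step that prevents the theorem from being purely triangulated in nature: without the algebraic enhancement there is no known proof, and a genuinely triangulated version is not available in this generality. Everything else in the argument is either bookkeeping with the t-structure axioms or a direct application of the t-structure generation results of \cite{li17}.
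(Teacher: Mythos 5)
Your proposal is correct and follows essentially the same route as the paper: the forward direction is reduced to the definability of coaisles of homotopically smashing t-structures from Saor\'in--\v{S}\v{t}ov\'i\v{c}ek (the paper cites \cite[Remark 8.9]{ss20}, whose content is exactly the pure-monomorphism closure you identify as the hard step), and the converse is obtained by forming the left-orthogonal aisle and invoking \cite[Lemma 4.8]{li17} for the approximation triangles. The extra detail you supply about the internal mechanism of \cite{ss20} is not needed for the citation-level argument the paper gives, but it does not affect correctness.
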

\begin{proof}
    Any coaisle is clearly a cosuspended subcategory. If $(\Ucal,\Vcal)$ is homotopically smashing t-structure then $\Vcal$ is definable by \cite[Remark 8.9]{ss20}. On the other hand, if $\Vcal$ is a definable and cosuspended subcategory then it is a coaisle of a t-structure by \cite[Lemma 4.8]{li17}, and such t-structure is clearly homotopically smashing.
\end{proof}
\subsection{Orthogonal subcategories}
Let $\Ccal$ be a subcategory of $\Tcal$. We write $\Hom_{\Tcal}(\Ccal,X) = 0$ as a shorthand for the statement $\Hom_{\Tcal}(C,X) = 0$ for all $C \in \Ccal$. Given a subset $I$ of $\Z$, we also use the following notation for subcategories degreewise orthogonal to $\Ccal$:
$$\Ccal^{\perp_I} = \{X \in \Tcal \mid \Hom_{\Tcal}(\Ccal,X[i]) = 0 ~\forall i \in I\}.$$
The role of $I$ will be played by the symbols $0$, $\leq 0$, $<0$, $\geq 0$, $>0$, or $\Z$ with their obvious interpretations as subsets of $\Z$.
The symbols $\Hom_{\Tcal}(X,\Ccal) = 0$ and ${}^{\perp_I}\Ccal$ are defined analogously, in particular:
$${}^{\perp_I}\Ccal = \{X \in \Tcal \mid \Hom_{\Tcal}(X,\Ccal[i]) = 0 ~\forall i \in I\}.$$
If $\Ccal = \{C\}$ is a singleton for some object $C \in \Tcal$, we will omit the brackets and write just $C^{\perp_0}$ et cetera.
\subsection{Compactly generated t-structures}
We say that a t-structure $(\Ucal,\Vcal)$ in $\Tcal$ is \emph{compactly generated} if there is a set $\Scal \subseteq \Tcal^c$ of compact objects such that $\Vcal = \Scal^{\perp_0}$. Any compactly generated t-structure is homotopically smashing, see \cite[Proposition 5.4]{ssv17}.

The compactly generated t-structures in $\Der(R)$ admit a geometrical classification in terms of invariants coming from the dual topology on $\Spec(R)$, which we recall now. A subset $X$ of $\Spec(R)$ is called \emph{Thomason} provided that there is a set $\Ical$ of finitely generated ideals of $R$ such that $X = \bigcup_{I \in \Ical}V(I)$. We remark that a subset of the spectrum is Thomason precisely if it is an open subset with respect to the Hochster dual topology on $\Spec(R)$, as explained in the discussion \cite[\S 2]{hs20}. Also note that if $R$ is noetherian, Thomason subsets are precisely the \emph{specialization closed} subsets of $\Spec(R)$, that is, the upper subsets of the poset $(\Spec(R), \subseteq)$. A \emph{Thomason filtration} is a sequence $\mathbb{X} = (X_n \mid n \in \Z)$ of Thomason subsets of $\Spec(R)$ which is decreasing in the sense that $X_n \supseteq X_{n+1}$ for each $n \in \Z$.

It turns out that any compactly generated t-structure in $\Der(R)$ is generated by distinguished compact objects of $\Der(R)$, the Koszul complexes. Recall that if $x \in R$ is an element then the \emph{Koszul complex} of $x$ is the complex of the form $K(x)= (R \xrightarrow{\cdot x} R)$ concentrated in degrees -1 and 0. If $\bar{x} = (x_1,x_2,\ldots,x_n)$ is a finite sequence of elements then we define the Koszul complex $K(\bar{x}) = \bigotimes_{i=1}^n K(x_i)$. Let $I$ be a finitely generated ideal of $R$. Then we will abuse the notation and write $K(I)$ for the Koszul complex on any fixed finite sequence $\bar{x}$ of generators of $I$. Recall that $K(I)$ is always a compact object in $\Der(R)$ and also that its cohomology modules are supported on $V(I)$. Although the change of choice of generators may alter $K(I)$ even when considered as an object of $\Der(R)$ up to isomorphism, these complexes generate the same t-structure regardless of the choice of generators, and therefore the abuse in notation is harmless in our application. The following results generalizes the classification in the noetherian case considered in \cite{ajs10}.
\begin{theorem}\label{thmcompgen}\emph{(\cite[Theorem 5.1]{hr20})}
            Let $R$ be a commutative ring. There is a bijective correspondence between the following collections:

            (i) compactly generated t-structures $(\Ucal,\Vcal)$ in $\Der(R)$, and

            (ii) Thomason filtrations $\mathbb{X} = (X_n \mid n \in \Z)$ on $\Spec(R)$.

            This correspondence assigns to a Thomason filtration $\mathbb{X}$ the coaisle of the form $\Vcal = \{K(I)[-n] \mid V(I) \subseteq X_n, n \in \Z\}^{\perp_0}$.
\end{theorem}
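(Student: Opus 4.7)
My plan is to construct mutually inverse maps between Thomason filtrations and compactly generated t-structures. The forward map sends $\mathbb{X}$ to $\Vcal_{\mathbb{X}} := \Scal(\mathbb{X})^{\perp_0}$ with $\Scal(\mathbb{X}) = \{K(I)[-n] \mid I \text{ f.g.\ ideal},\, V(I) \subseteq X_n\}$; since $\Scal(\mathbb{X})$ consists of compact objects this is automatically a compactly generated coaisle, and $X_{n+1} \subseteq X_n$ gives $\Scal(\mathbb{X})[1] \subseteq \Scal(\mathbb{X})$, hence $\Ucal_\mathbb{X}[1] \subseteq \Ucal_\mathbb{X}$. The reverse map sends $(\Ucal,\Vcal)$ to
\[X_n^{\Vcal} := \bigcup \{V(I) \mid I \text{ f.g.}, \ K(I)[-n] \in \Ucal\},\]
which is Thomason by construction and decreasing because $\Ucal[1] \subseteq \Ucal$.

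\medskip

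Two of the four inclusions needed for bijectivity are essentially formal. The inclusion $X_n \subseteq X_n^{\Vcal_\mathbb{X}}$ is immediate, since $V(J) \subseteq X_n$ places $K(J)[-n]$ directly into $\Scal(\mathbb{X}) \subseteq \Ucal_\mathbb{X}$. The inclusion $\Vcal \subseteq \Vcal_{\mathbb{X}^\Vcal}$ requires showing that $V(J) \subseteq X_n^\Vcal$ implies $K(J)[-n] \in \Ucal$: using quasi-compactness of $V(J)$, I may reduce to $V(J) \subseteq V(I_1) \cup \cdots \cup V(I_m)$ with each $K(I_k)[-n] \in \Ucal$, and then Thomason's classification of thick subcategories of $\Der(R)^c$ by Thomason subsets of $\Spec(R)$ places $K(J)$ into the thick closure of the $K(I_k)$'s, hence into $\Ucal$.

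\medskip

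The substantive content lies in the two reverse inclusions. For $X_n^{\Vcal_\mathbb{X}} \subseteq X_n$, given a prime $\pp \notin X_n$, I must construct a test object $T \in \Vcal_\mathbb{X}$ with $\Hom_{\Der(R)}(K(J)[-n], T) \neq 0$ whenever $\pp \in V(J)$. The natural first candidate is the shifted Matlis envelope $T = E(R/\pp)[n]$, for which non-vanishing is clear because $\Hom_{\Der(R)}(K(J)[-n], E(R/\pp)[n])$ contains the composite $R/J \twoheadrightarrow R/\pp \hookrightarrow E(R/\pp)$. Membership $T \in \Vcal_\mathbb{X}$ reduces to the Koszul cohomology vanishing $H^{n+m}(I, E(R/\pp)) = 0$ for every f.g.\ $I$ and every $m$ with $V(I) \subseteq X_m$: this is automatic whenever $\pp \notin V(I)$, since then $K(I) \otimes_R R_\pp$ is contractible and $E(R/\pp)$ is an $R_\pp$-module, while in the remaining case $\pp \in V(I) \subseteq X_m$ the hypothesis $\pp \notin X_n$ forces $m < n$ by the decreasing property of $\mathbb{X}$. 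For the companion inclusion $\Vcal_{\mathbb{X}^\Vcal} \subseteq \Vcal$, I must show that every compact $S \in \Ucal$ (a bounded complex of f.g.\ projectives) lies in the aisle generated by $\Scal(\mathbb{X}^\Vcal)$; descending induction on the brutal truncation of $S$ reduces this to controlling its finitely presented cohomology modules by Koszul complexes on finitely generated ideals whose vanishing loci cover their Thomason supports, each such locus lying in the correct $X_{n}^\Vcal$ by definition.

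\medskip

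The main obstacle of the whole argument is the refinement of the test object $T$ in the third paragraph. In the noetherian case of \cite{ajs10}, the Koszul cohomology of $E(R/\pp)$ reduces cleanly to ordinary local cohomology, the relevant degree range is controlled by $\dim R_\pp$, and the naive $T = E(R/\pp)[n]$ already suffices. In the non-noetherian generality, however, Koszul cohomology of $E(R/\pp)$ can accumulate in undesired degrees $n+m$ for $m < n$, so one must replace $T$ by a more delicate construction --- for instance, a derived $\pp$-stalk, a Čech-type homotopy limit built directly from the filtration $\mathbb{X}$, or a truncation of the minimal injective resolution of $R_\pp$ --- that preserves the non-vanishing against $K(J)[-n]$ while cleanly orthogonalizing against all $K(I)[-m]$ appearing in $\Scal(\mathbb{X})$; this non-noetherian refinement is the principal technical content of \cite[Theorem 5.1]{hr20}.
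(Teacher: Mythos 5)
The paper does not prove this statement at all: it is quoted verbatim from \cite[Theorem 5.1]{hr20} and used as a black box, so there is no internal proof to compare against. Judged on its own terms, your skeleton (two mutually inverse assignments, four inclusions) is the right architecture, but two of the four legs have real problems.

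First, a sign error that propagates: the test object witnessing $X_n^{\Vcal_\mathbb{X}} \subseteq X_n$ should be $E(R/\pp)[-n]$, not $E(R/\pp)[n]$ (compare Lemma 3.6(iii) of the present paper, where exactly the object $E(\kappa(\pp))[-n]$ is placed in $\Vcal$). With the correct shift, $\Hom_{\Der(R)}(K(I)[-m],E(R/\pp)[-n]) = H^{m-n}\bigl(\Hom^\bullet(K(I),E(R/\pp))\bigr)$, and since the Hom-complex of a Koszul complex into an injective module is concentrated in non-negative degrees while your case analysis forces $m<n$, the vanishing is automatic; the ``accumulation in undesired degrees $n+m$'' that your final paragraph identifies as the principal non-noetherian obstacle is an artifact of the wrong sign. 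The genuinely hard, non-formal content lies instead in the two inclusions you label as formal or dispatch by induction. For $\Vcal \subseteq \Vcal_{\mathbb{X}^\Vcal}$ you invoke the Hopkins--Neeman--Thomason classification of thick subcategories to conclude $K(J)[-n]\in\Ucal$ from $K(I_k)[-n]\in\Ucal$; but thick subcategories are closed under both suspension and cosuspension, whereas an aisle is only closed under $[1]$, so membership of $K(J)$ in the \emph{thick} closure of the $K(I_k)$ does not place $K(J)[-n]$ in $\Ucal$. One needs the sharper statement that $K(J)[-n]$ lies in the cocomplete \emph{preaisle} (suspended, extension- and coproduct-closed subcategory) generated by the $K(I_k)[-n]$, which is one of the main technical results of \cite{hr20} and requires a separate argument (roughly, control of $K(I^m)$ and telescopes, not the thick-subcategory theorem). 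The same issue, in amplified form, affects the final inclusion $\Vcal_{\mathbb{X}^\Vcal}\subseteq\Vcal$: showing that an arbitrary compact object of $\Ucal$ lies in the aisle generated by the relevant Koszul complexes is the heart of the theorem (it is where the noetherian argument of \cite{ajs10} genuinely fails to generalize), and ``descending induction on the brutal truncation'' does not by itself explain why a finitely presented cohomology module supported on $X_n^\Vcal$ is built from the $K(I)[-n]$ inside an aisle. As written, the proposal has a correct frame but leaves the two substantive generation statements unproved and locates the difficulty in the wrong place.
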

\subsection{Hereditary torsion pairs in $\ModR$}\label{SS:torsion}
Recall that a \emph{torsion pair} in $\ModR$ is a pair of full subcategories $(\Tcal,\Fcal)$ of $\ModR$ which are maximal with respect to the property $\Hom_R(\Tcal,\Fcal) = 0$. A torsion pair $(\Tcal,\Fcal)$ is called \emph{hereditary} if the \emph{torsion class} $\Tcal$ is closed under submodules, or equivalently, if the \emph{torsion-free class} $\Fcal$ is closed under taking injective envelopes. A hereditary torsion pair $(\Tcal,\Fcal)$ is \emph{of finite type} if $\Fcal$ is closed under direct limits. Hereditary torsion pairs of finite type in $\ModR$ were proved by Garkusha and Prest to be in bijection with Thomason subsets of $\Spec(R)$.
\begin{theorem}\label{thm_torsion_pair}\emph{(\cite{gp08}, see also \cite[Proposition 2.11]{hs20})}
    There is a bijection
    $$\left \{ \begin{tabular}{ccc} \text{ Hereditary torsion pairs $(\Tcal,\Fcal)$} \\ \text{in $\ModR$} \end{tabular}\right \}  \xleftrightarrow{1-1}  \left \{ \begin{tabular}{ccc} \text{ Thomason subsets $X$} \\ \text{ of $\Spec(R)$ } \end{tabular}\right \}$$
    provided by the mutually inverse assignments
    $$\Tcal \mapsto X = \bigcup\{V(I) \mid R/I \in \Tcal\}$$
    and
    $$X \mapsto \Tcal = \{M \in \ModR \mid \Supp(M) \subseteq X\}.$$
\end{theorem}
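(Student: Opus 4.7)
The plan is to verify well-definedness of the two assignments, isolate a key quasi-compactness lemma, and then check both composites are identities. For the assignment $X \mapsto \Tcal_X := \{M \in \ModR \mid \Supp M \subseteq X\}$, additivity of $\Supp$ on short exact sequences yields closure under submodules, quotients, and extensions, while closure under coproducts is immediate, so $\Tcal_X$ is a hereditary torsion class. Writing $X = \bigcup_{I \in \Ical} V(I)$ with $\Ical$ consisting of finitely generated ideals, the torsion-free class coincides with $\{N \in \ModR \mid \Hom_R(R/I, N) = 0 \text{ for all } I \in \Ical\}$; since each $R/I$ is finitely presented, this class is closed under direct limits, so the pair is of finite type. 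For the other assignment $(\Tcal,\Fcal) \mapsto X_\Tcal := \bigcup\{V(I) \mid I \text{ finitely generated}, R/I \in \Tcal\}$, the output is Thomason by definition.

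The main technical step is the following lemma: for any finitely generated ideal $I$, $R/I \in \Tcal$ if and only if $V(I) \subseteq X_\Tcal$. The ``only if'' direction is immediate. For the converse, I would use that $V(I)$ is quasi-compact in the Hochster dual topology on $\Spec(R)$, in which Thomason subsets are precisely the open sets and the sets $V(J)$ with $J$ finitely generated form a basis of opens. Extracting a finite subcover gives $V(I) \subseteq V(J_1) \cup \cdots \cup V(J_n) = V(J_1 \cdots J_n)$ with each $R/J_k \in \Tcal$. Setting $J = J_1 \cdots J_n$, iterated extensions yield $R/J \in \Tcal$, and $V(I) \subseteq V(J)$ gives $J \subseteq \sqrt{I}$, whence $J^m \subseteq I$ for some $m$ because $J$ is finitely generated. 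Since $R/J^m$ is an iterated extension of $R/J$ and $R/I$ is its quotient, $R/I \in \Tcal$. I expect this compactness step, together with the translation of the finite-type hypothesis into the existence of a generating set of the form $R/I$ with $I$ finitely generated, to be the main obstacle.

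Granting the lemma, the composites are routine. Starting from $X$, one recovers $X_{\Tcal_X} = \bigcup\{V(I) \mid I \text{ f.g.},\, V(I) \subseteq X\} = X$ because $X$ is a union of such $V(I)$. Starting from $(\Tcal,\Fcal)$ of finite type, the finite-type condition is equivalent to the associated Gabriel filter admitting a basis of finitely generated ideals, so $\Tcal$ is generated as a hereditary torsion class by the cyclic modules $R/I$ it contains with $I$ finitely generated; by the lemma this generating set coincides with the cyclic objects of $\Tcal_{X_\Tcal}$, yielding $\Tcal = \Tcal_{X_\Tcal}$.
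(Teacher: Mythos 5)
The paper does not prove this statement --- it is quoted from Garkusha and Prest (with \cite[Proposition 2.11]{hs20} as an alternative source) --- so there is no in-paper proof to compare against. Your argument is a direct proof, and the Hochster-dual/quasi-compactness approach you take is the standard route to it. However, there is a genuine gap in your first paragraph.

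You assert, for $X = \bigcup_{I \in \Ical}V(I)$ Thomason with $\Ical$ a set of finitely generated ideals, that $\Fcal_X = \{N \mid \Hom_R(R/I,N)=0 \text{ for all } I \in \Ical\}$, and from this you deduce that $(\Tcal_X,\Fcal_X)$ is of finite type. The inclusion $\Fcal_X \subseteq \{N \mid \cdots\}$ is clear, but the reverse inclusion is equivalent to saying that $\Tcal_X$ is \emph{generated} as a hereditary torsion class by the cyclics $R/I$, $I \in \Ical$, and that is exactly where the content of this theorem is concentrated. Reducing to a cyclic module $R/J$ with $V(J) \subseteq X$ (where $J = \ann(m)$ need not be finitely generated), one must produce a finitely generated $I' \subseteq J$ with $V(I') \subseteq X$, so that $R/I' \twoheadrightarrow R/J$ and your key lemma can be invoked; equivalently, one must prove that the Gabriel filter $\{J \mid V(J) \subseteq X\}$ has a basis of finitely generated ideals. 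This requires quasi-compactness of $V(J)$ in the Hochster dual topology for \emph{arbitrary} ideals $J$, whereas your key lemma, and the standard fact you cite that the sets $V(I)$ with $I$ finitely generated are exactly the quasi-compact dual-opens, handle only the finitely generated case. The missing ingredient is true --- $V(J)$ is closed in the patch (constructible) topology and the inverse topology is coarser, so $V(J)$ is inverse-quasi-compact; a finite subcover $V(J) \subseteq V(I_1\cdots I_n)$ together with finite generation of $I_1\cdots I_n$ then yields $(I_1\cdots I_n)^m \subseteq J$ for some $m$ with $V((I_1\cdots I_n)^m)\subseteq X$ --- but it must be stated and justified, as neither of the two ingredients you explicitly isolate delivers it. Once this is supplied, the key lemma and both composite verifications do close up, and your restriction to finitely generated $I$ in the definition of $X_\Tcal$ is harmless precisely because of the finite-type hypothesis.
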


\section {(Co)localization and t-structures }\label{sec_three}

\begin{definition}For every object $X\in \Der(R)$ and every prime ideal $\pp$ of $R$, we denote by $X_{\pp}$ the object $X \otimes_R^\mathbf{L} R_\pp = X \otimes_R R_\pp$, the localization of $X$ at $\pp$ and by $X^\pp$ the object $\mathbf{R}\mathrm{Hom}_{R}(R_{\pp},X)$; we call it the \emph{colocalization} of $X$ at $\pp$.
Similarly, for a subcategory $\Ccal$ of $\Der(R)$ we consider the following subcategories of $\Der(R_\pp)$: $\Ccal_\pp = \{X_\pp \mid X \in \Ccal\}$ and $\Ccal^{\pp}=\{X^\pp \mid X\in\Ccal\}$.
\end{definition}
\begin{remark}\label{rem_adjoints}
Recall that for any prime ideal $\pp$, the derived category $\Der(R_\pp)$ is naturally a (full) subcategory of $\Der(R)$. By the same token, we can naturally consider any subcategory $\Ccal_\pp$ of $\Der(R_\pp)$ as a subcategory of $\Der(R)$. Moreover, there is a \emph{(stable) TTF-triple} $(\Lcal,\Der(R_\pp),\Kcal)$ in $\Der(R)$, which amounts to saying that there are two adjacent t-structures $(\Lcal,\Der(R_\pp))$ and $(\Der(R_\pp),\Kcal)$ (both of which are necessarily stable). It follows that the inclusion of $\Der(R_\pp)$ into $\Der(R)$ admits both the left and the right adjoint, and these are realized by the functors $- \otimes_R^\mathbf{L} R_\pp$ and $\RHom_R(R_\pp,-)$, respectively. For details, see e.g. \cite[Theorem 4.3]{lihrbek} and references therein.
\end{remark}
\begin{lemma}\label{lem_adjunction}Let $\pp$ be a prime ideal of $R$. We have the following
natural isomorphisms:

$(i)$ For every  $X\in \Der(R_{\pp})$ and $Y\in\Der(R)$, we have
$$\Hom_{\Der(R_{\pp})}(X,Y^{\pp})\cong \Hom_{\Der(R)}(X,Y).$$

$(ii)$  For every  $Y\in \Der(R_{\pp})$ and  $X\in \Der(R)$, we have
$$\Hom_{\Der(R_{\pp})}(X_{\pp},Y)\cong \Hom_{\Der(R)}(X,Y).$$

$(iii)$ For  $X, Y\in \Der(R)$, we have
$$\Hom_{\Der(R)}(X_{\pp},Y)\cong \Hom_{\Der(R)}(X,Y^{\pp}).$$

\end{lemma}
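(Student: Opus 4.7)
The plan is to read all three isomorphisms off the TTF-triple $(\Lcal, \Der(R_\pp), \Kcal)$ recalled in Remark~\ref{rem_adjoints}. That remark records that the inclusion $\iota: \Der(R_\pp) \hookrightarrow \Der(R)$ admits a left adjoint given by $(-)_\pp = - \otimes_R^{\mathbf{L}} R_\pp$ and a right adjoint given by $(-)^\pp = \RHom_R(R_\pp,-)$. Once these two adjunctions are in hand, all three statements are essentially formal consequences. The only subtle point -- that $\Der(R_\pp)$ genuinely sits as a full subcategory of $\Der(R)$, so that $\iota$ acts as an identity on objects -- is already built into Remark~\ref{rem_adjoints}.

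For part (i) I would invoke the right adjunction: for $X \in \Der(R_\pp)$ and $Y \in \Der(R)$,
$$\Hom_{\Der(R_\pp)}(X, Y^\pp) = \Hom_{\Der(R_\pp)}\bigl(X, \RHom_R(R_\pp, Y)\bigr) \cong \Hom_{\Der(R)}(\iota X, Y) = \Hom_{\Der(R)}(X, Y).$$
Part (ii) is the dual argument via the left adjunction: for $X \in \Der(R)$ and $Y \in \Der(R_\pp)$,
$$\Hom_{\Der(R_\pp)}(X_\pp, Y) = \Hom_{\Der(R_\pp)}(X \otimes_R^{\mathbf{L}} R_\pp, Y) \cong \Hom_{\Der(R)}(X, \iota Y) = \Hom_{\Der(R)}(X, Y).$$

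For (iii) the strategy is to chain (i) and (ii). Given $X, Y \in \Der(R)$, both $X_\pp$ and $Y^\pp$ lie in $\Der(R_\pp)$. Applying (i) with $X$ replaced by $X_\pp$ yields
$$\Hom_{\Der(R)}(X_\pp, Y) \cong \Hom_{\Der(R_\pp)}(X_\pp, Y^\pp),$$
and applying (ii) with $Y$ replaced by $Y^\pp$ yields
$$\Hom_{\Der(R_\pp)}(X_\pp, Y^\pp) \cong \Hom_{\Der(R)}(X, Y^\pp).$$
Composing the two displays yields the isomorphism in (iii).

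There is no real obstacle here: the lemma is a transcription of the adjoint chain $(-)_\pp \dashv \iota \dashv (-)^\pp$ into the language of $\Hom$-sets. Naturality in both variables is automatic from the naturality of the underlying adjunction bijections, so I would not dwell on it.
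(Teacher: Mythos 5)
Your proof is correct and follows essentially the same route as the paper: both read (i) and (ii) off the adjunction chain $(-)_\pp \dashv \iota \dashv (-)^\pp$ (the paper just unpacks this one level further via the derived tensor--hom adjunction together with the identifications $X_\pp \cong X$ and $Y^\pp \cong Y$ for objects already in $\Der(R_\pp)$), and both obtain (iii) by chaining (i) and (ii) in exactly the same way.
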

\begin{proof}
 For any object $X\in \Der(R_\pp)$ and maximal ideal $\pp$, we have $X_{\pp}=X\otimes_{R}R_{\pp}\cong X\otimes^{\mathbf{L}}_{R}{R_{\pp}}\cong X\otimes^{\mathbf{L}}_{R_{\pp}}{R_{\pp}}\cong X$ and $X^{\pp}=\RHom_{R}(R_{\pp},X)\cong\RHom_{R_{\pp}}(R_{\pp},X)\cong X$.
The isomorphism in $(i)$ follows by $\Hom_{\Der(R_{\pp})}(X,Y^{\pp})\cong
\Hom_{\Der(R_{\pp})}(X,\RHom_{R}(R_{\pp},Y))
\cong \Hom_{\Der(R)}(X\otimes^{\mathbf{L}}_{R}R_{\pp},Y)\cong \Hom_{\Der(R)}(X,Y)$. Similarly, the isomorphism in $(ii)$ follows by $\Hom_{\Der(R_{\pp})}(X_{\pp},Y)\cong \Hom_{\Der(R)}(X,\RHom_{R}(R_{\pp},Y))\cong\Hom_{\Der(R)}(X,Y)$.
Applying the isomorphisms from $(i)$ and $(ii)$, we obtain $\Hom_{\Der(R)}(X_{\pp},Y)\cong \Hom_{\Der(R_{\pp})}(X_{\pp},Y^{\pp})\cong \Hom_{\Der(R)}(X,Y^{\pp})$.
\end{proof}
\begin{lemma}\label{lemma_tstr_loc}
    Let $(\Ucal,\Vcal)$ be a t-structure in $\Der(R)$ and $\pp \in \Spec(R)$. Then:
    \begin{enumerate}
        \item[(i)] $\Ucal_\pp = \Ucal \cap \Der(R_\pp)$ and $\Vcal^\pp = \Vcal \cap \Der(R_\pp)$.
        \item[(ii)] $(\Ucal_\pp,\Vcal^\pp)$ is a t-structure in $\Der(R_\pp)$.
        \item[(iii)] If $(\Ucal,\Vcal)$ is in addition homotopically smashing then $\Vcal^\pp = \Vcal_\pp$.
    \end{enumerate}
\end{lemma}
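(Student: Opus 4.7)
For part (i), I would begin with the immediate inclusions $\Ucal\cap\Der(R_\pp)\subseteq\Ucal_\pp$ and $\Vcal\cap\Der(R_\pp)\subseteq\Vcal^\pp$, both following from the identifications $X_\pp\cong X\cong X^\pp$ for $X\in\Der(R_\pp)$ recorded in the proof of Lemma~\ref{lem_adjunction}. For the converse inclusion $\Ucal_\pp\subseteq\Ucal$, I would express $R_\pp=\varinjlim_{s\in R\setminus\pp}R$ as a filtered colimit with transition maps given by multiplication, so that $U_\pp = U\otimes_R R_\pp \cong \varinjlim_s U$ realizes $U_\pp$ as a directed homotopy colimit of copies of $U$ in $\Der(R)$. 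The aisle $\Ucal={}^{\perp_0}\Vcal$ is closed under coproducts (because $\Vcal$ is closed under products) and under cones of morphisms internal to $\Ucal$ (rotating the cone triangle exhibits the cone as an extension of two objects in $\Ucal$), hence also under directed homotopy colimits via the standard telescope description; so $U_\pp\in\Ucal$. The remaining inclusion $\Vcal^\pp\subseteq\Vcal$ is then formal from Lemma~\ref{lem_adjunction}(iii): for $V\in\Vcal$ and any $U\in\Ucal$ we have $\Hom(U,V^\pp)\cong\Hom(U_\pp,V)=0$, placing $V^\pp$ in $\Ucal^{\perp_0}=\Vcal$.

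The main obstacle is part (ii), specifically the triangle axiom. Given $X\in\Der(R_\pp)$, my plan is to take the $\Der(R)$-truncation triangle $U\to X\to V\to U[1]$ with $U\in\Ucal,\,V\in\Vcal$ and to argue that both ends automatically lie in $\Der(R_\pp)$, so that by part (i) they are in $\Ucal_\pp$ and $\Vcal^\pp$. The key observation is that membership $Y\in\Der(R_\pp)$ is detected by multiplication $s\cdot\colon Y\to Y$ being an isomorphism for every $s\in R\setminus\pp$ (since $Y_\pp\cong\varinjlim_s Y$ stabilizes to $Y$ in that case). Since $X$ satisfies this and the $\Ucal$-truncation functor $\tau_\Ucal$ is an additive, $R$-linear functor, naturality transports the isomorphism $s\cdot\colon X\to X$ to $s\cdot\colon U\to U$, which is therefore again an isomorphism; hence $U\in\Der(R_\pp)$, and then $V\in\Der(R_\pp)$ follows because the cone sits inside the triangulated subcategory $\Der(R_\pp)$. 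Axioms (t1) and (t2) for $(\Ucal_\pp,\Vcal^\pp)$ are inherited directly from the corresponding properties of $(\Ucal,\Vcal)$.

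For part (iii), one direction $\Vcal^\pp\subseteq\Vcal_\pp$ is unconditional via $Y\cong Y_\pp$ for $Y\in\Vcal\cap\Der(R_\pp)$. The reverse uses the homotopical smashing assumption: by the theorem of \S\ref{S:prelim}, $\Vcal$ is then closed under directed homotopy colimits, so the description $V_\pp\cong\varinjlim_s V$ places $V_\pp$ in $\Vcal\cap\Der(R_\pp)=\Vcal^\pp$ for every $V\in\Vcal$.
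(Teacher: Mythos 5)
Your proof is correct, and for parts (i) and (ii) it follows a genuinely different route than the paper. For (i), the paper simply cites \cite[Proposition 2.2]{hr20} for the facts $U_\pp\in\Ucal$ and $V^\pp\in\Vcal$, whereas you re-derive the first by presenting $U_\pp$ as a directed homotopy colimit of copies of $U$ (using the fact that aisles are closed under directed homotopy colimits --- this is \cite[Proposition 4.2]{ssv17}, which the paper uses elsewhere; your ``telescope'' wording is a bit loose, since $R\setminus\pp$ need not be countable, but the cited result covers the general directed case). For (ii), the paper takes a quite different tack: it shows directly that $V\in\Der(R_\pp)$ by exhibiting $V$ as a retract of $V^\pp$, using a diagram chase with the $\Vcal$-reflection $r\colon X\to V$ and the $\Der(R_\pp)$-coreflection $c\colon V^\pp\to V$, concluding $cg=\mathrm{id}_V$. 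You instead argue that the truncation functor $\tau_\Ucal$, being a right adjoint of an $R$-linear inclusion, is itself $R$-linear, so the isomorphisms $s\cdot\mathrm{id}_X$ descend to isomorphisms $s\cdot\mathrm{id}_U$, placing $U$ in $\Der(R_\pp)$ first and getting $V$ from the two-out-of-three property of the triangulated subcategory. Both arguments are valid; yours is self-contained in the sense of not invoking the stable TTF-triple from Remark~\ref{rem_adjoints}, at the price of having to justify the $R$-linearity of $\tau_\Ucal$ (which does hold: the adjunction isomorphism is post-composition with the counit, hence $R$-linear, but you should spell this out). Your part (iii) is essentially the paper's argument. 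One small slip: aisles are closed under coproducts because $\Ucal={}^{\perp_0}\Vcal$ and $\Hom(\coprod U_i,V)\cong\prod\Hom(U_i,V)$, not because $\Vcal$ is closed under products; the parenthetical reason you give is irrelevant, though the conclusion is of course standard.
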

\begin{proof}
    $(i):$ By \cite[Proposition 2.2]{hr20}, we have that $U_\pp \in \Ucal$ for any $U \in \Ucal$ and $V^\pp \in \Vcal$ for any $V \in \Vcal$, which easily yields the desired equalities.

    $(ii):$ The only non-trivial step is to check the existence of canonical triangles with respect to $(\Ucal_\pp,\Vcal^\pp)$. Let $X$ be an object of $\Der(R_\pp) \subseteq \Der(R)$ and consider the canonical triangle
    $$U \rightarrow X \xrightarrow{r} V \rightarrow U[1]$$
    with respect to the t-structure $(\Ucal,\Vcal)$ in $\Der(R)$. Consider the natural colocalization morphism $c: V^\pp \rightarrow V$. Recall from Remark~\ref{rem_adjoints} that $c$ is the $\Der(R_\pp)$-coreflection of $V$. Together with $X \in \Der(R_\pp)$ this yields that the morphism $r$ factors through $c$ by a map $f: X \rightarrow V^\pp$. On the other hand, as $V^\pp$ belongs to $\Vcal$ by $(i)$, the map $f$ factors through the $\Vcal$-reflection map $r$ by a map $g: V \rightarrow V^\pp$. The situation is captured in the following commutative diagram:
   {\begin{center}
    \begin{tikzcd}[column sep = large, row sep = large]
        X \arrow[r,"r"] \arrow[dr,"f"] & V \arrow[d, bend left=20,"g"] \\
        & V^\pp \arrow[u,bend left=20,"c"]
    \end{tikzcd}
    \end{center}}
    By the construction, we have $cgr = cf = r$. Since $r$ is the $\Vcal$-reflection morphism of $X$, the map $cg$ has to be the identity on $V$, and so $g$ is a split monomorphism. It follows that $V \in \Der(R_\pp)$, which implies also $U \in \Der(R_\pp)$. We conclude that $U \rightarrow X \xrightarrow{r} V \rightarrow U[1]$ is already the desired approximation triangle in $\Der(R_\pp)$.

    $(iii):$ If $(\Ucal,\Vcal)$ is homotopically smashing then $\Vcal$ is closed under the localization functor $- \otimes_R R_\pp$, and therefore $\Vcal_\pp = \Vcal \cap \Der(R_\pp)$. By $(i)$, $\Vcal_\pp = \Vcal^\pp$.
\end{proof}
\begin{lemma}\label{lem_def_loc}
    Let $\Vcal$ be a definable subcategory of $\Der(R)$. Then the following conditions are equivalent for any object $X \in \Der(R)$:
    \begin{enumerate}
        \item[(i)] $X \in \Vcal$,
        \item[(ii)] $X_\mm \in \Vcal_\mm$ for any $\mm \in \mSpec(R)$,
        \item[(iii)] $X_\mm \in \Vcal$ for any $\mm \in \mSpec(R)$.
    \end{enumerate}
\end{lemma}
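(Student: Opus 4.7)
The plan is to prove the cycle (i) $\Rightarrow$ (iii) $\Rightarrow$ (ii) $\Rightarrow$ (i), exploiting the characterisation of definable subcategories from Theorem~2.1 via closure under products, pure monomorphisms, and directed homotopy colimits. For (i) $\Rightarrow$ (iii), I would first observe that $R_\mm = \varinjlim_{s \in R \setminus \mm} R$ is a directed colimit of copies of $R$ (over the multiplicative poset $R\setminus\mm$ under divisibility), so using flatness of $R_\mm$ the object $X_\mm = X \otimes_R^{\mathbf{L}} R_\mm$ is a directed homotopy colimit of copies of $X$ in $\Der(R)$, and therefore lies in $\Vcal$. The equivalence (ii) $\Leftrightarrow$ (iii) would then follow from the identity $\Vcal_\mm = \Vcal \cap \Der(R_\mm)$: the inclusion $\Vcal_\mm \subseteq \Vcal \cap \Der(R_\mm)$ is exactly (i) $\Rightarrow$ (iii), and the reverse inclusion is automatic since any $Y \in \Der(R_\mm)$ satisfies $Y \cong Y_\mm$.

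The heart of the proof is the direction (iii) $\Rightarrow$ (i). I would consider the natural comparison morphism
\[
\phi\colon X \lr \prod_{\mm \in \mSpec(R)} X_\mm
\]
assembled from the localization units $X \to X_\mm$. The target lies in $\Vcal$: each $X_\mm$ is in $\Vcal$ by hypothesis, and $\Vcal$ is closed under products. It therefore suffices to show that $\phi$ is a pure monomorphism, as closure of $\Vcal$ under pure monomorphisms then forces $X \in \Vcal$. To verify purity, let $S \in \Der(R)^c$ be any compact object, i.e.\ a perfect complex. The projection formula for perfect complexes, combined with the flatness of $R_\mm$, yields a natural isomorphism $\Hom_{\Der(R)}(S, X_\mm) \cong \Hom_{\Der(R)}(S, X) \otimes_R R_\mm$. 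Setting $N = \Hom_{\Der(R)}(S, X)$, the induced map $\Hom_{\Der(R)}(S,\phi)$ becomes the canonical comparison $N \to \prod_{\mm} N_\mm$, which is injective by the standard local-global principle for modules (an element vanishing in every stalk at a maximal ideal has annihilator not contained in any maximal ideal of $R$, hence equal to $R$).

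The main obstacle, and the only step of genuine content, is the purity of $\phi$ in the direction (iii) $\Rightarrow$ (i). All other implications are formal manipulations with the closure properties of definable subcategories. The purity argument crucially relies on the compatibility $\Hom_{\Der(R)}(S,-)\circ(-\otimes_R R_\mm) \cong (-)_\mm\circ\Hom_{\Der(R)}(S,-)$ valid for perfect $S$, which is precisely where compactness enters and cannot be bypassed by a softer argument.
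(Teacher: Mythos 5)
Your proposal is correct and follows essentially the same route as the paper's proof: the crucial direction (iii) $\Rightarrow$ (i) is handled identically by showing that the natural map $X \to \prod_{\mm} X_\mm$ is a pure monomorphism (the paper also uses the isomorphism $\Hom_{\Der(R)}(S, X_\mm) \cong \Hom_{\Der(R)}(S,X)_\mm$ for compact $S$ and the local-global injectivity of $N \to \prod_\mm N_\mm$), and the remaining implications are handled via closure under directed homotopy colimits, just as you do. The only superficial difference is that the paper organizes the easy directions as (i) $\Rightarrow$ (ii) $\Rightarrow$ (iii), while you prove (i) $\Rightarrow$ (iii) directly and deduce (ii) $\Leftrightarrow$ (iii) from the identity $\Vcal_\mm = \Vcal \cap \Der(R_\mm)$; both versions rest on the same observation that $X_\mm$ is a directed homotopy colimit of (finite coproducts of) copies of $X$.
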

\begin{proof}
    $(i) \implies (ii):$ This is just the definition of the subcategory $\Vcal_\mm$.

    $(ii) \implies (iii):$ Since $\Vcal$ is definable, it is closed under directed homotopy colimits. But since $X_\mm = X \otimes_R R_\mm$, $X_\mm$ can be represented as a directed homotopy colimit of a coherent diagram consisting of finite coproducts of copies of $X$. Therefore, $\Vcal_\mm$ is a subcategory of $\Vcal$.

    $(iii) \implies (i):$ Consider the natural map $f: X \rightarrow \prod_{\mm \in \mSpec(R)} X_\mm$. If $S \in \Der(R)^c$ is a compact object, we have that $\Hom_{\Der(R)}(S,f): \Hom_{\Der(R)}(S,X) \rightarrow \Hom_{\Der(R)}(S,\prod_{\mm \in \mSpec(R)}X_\mm) \cong \prod_{\mm \in \mSpec(R)}\Hom_{\Der(R)}(S,X)_\mm$ is a monomorphism in $\ModR$, and therefore $f$ is a pure monomorphism in $\Der(R)$. Since $\Vcal$ contains $X_\mm$ for any $\mm \in \mSpec(R)$ and is closed under products and pure monomorphisms, we conclude that $X \in \Vcal$.
\end{proof}
\subsection{Aisles of compactly generated t-structures}
If $R$ is noetherian, a more explicit description of the compactly generated t-structure $(\Ucal,\Vcal)$ in terms of the associated Thomason filtration is available in \cite[Theorem 3.11]{ajs10}. In this case, the aisle is described cohomologically as $\Ucal = \{X \in \Der(R) \mid \Supp H^n(X) \subseteq X_n ~\forall n \in \Z\}$. For general commutative ring and general Thomason filtration, such description seems currently unavailable apart from special cases \cite[Proposition 6.6]{ro08}, \cite[Lemma 3.10]{hs20}. However, we can extract the following slightly weaker structural information even in the general situation which we record here for later use.

Let $\mathbb{X} = (X_n \mid n \in \Z)$ be a Thomason filtration of $\Spec(R)$ inducing a compactly generated t-structure $(\Ucal,\Vcal)$. Let $\Der^+(R)$ denote the subcategory of $\Der(R)$ consisting of all objects $X$ with $H^i(X) = 0$ for $i \ll 0$.

\begin{lemma}\label{lem_bounded_dev}
    Let $(\Ucal,\Vcal)$ be a compactly generated t-structure in $\Der(R)$ corresponding to a Thomason filtration $\mathbb{X} = (X_n \mid n \in \Z)$. Put $\Ucal_\# = \{X \in \Der(R) \mid \Supp H^n(X) \subseteq X_n ~\forall n \in \Z\}$. Then:
    \begin{enumerate}
        \item[(i)] There is a set $\Ecal$ of shifts of stalks of injective $R$-modules such that $\Ucal_\# = {}^{\perp_0}\Ecal$.
        \item[(ii)] The category $\Ucal_\#$ is an aisle of a t-structure $(\Ucal_\#,\Vcal_\#)$.
        \item[(iii)] $\Ucal \subseteq \Ucal_\#$ and $\Vcal_\# \subseteq \Vcal$,
        \item[(iv)] $\Ucal \cap \Der^+(R) = \Ucal_\# \cap \Der^+(R)$ and $\Vcal \cap \Der^+(R) = \Vcal_\# \cap \Der^+(R)$.
    \end{enumerate}
\end{lemma}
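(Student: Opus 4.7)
The plan for \emph{(i)} is to invoke Theorem~\ref{thm_torsion_pair} to associate to each Thomason set $X_n$ the hereditary torsion pair of finite type $(\Tcal_n,\Fcal_n)$ in $\ModR$ with torsion class $\Tcal_n = \{M \mid \Supp M \subseteq X_n\}$. Since $\Fcal_n$ is a hereditary torsion-free class in a Grothendieck category, it admits an injective cogenerator $E_n$. Taking $\Ecal = \{E_n[-n] \mid n \in \Z\}$ and using that $\Hom_{\Der(R)}(X, E_n[-n]) \cong \Hom_R(H^n(X), E_n)$ (valid since $E_n$ is injective), the condition $X \in {}^{\perp_0}\Ecal$ translates exactly into $H^n(X) \in \Tcal_n$ for every $n$, which is the defining condition of $\Ucal_\#$.

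For \emph{(ii)} and \emph{(iii)}, the monotonicity $X_{n+1} \subseteq X_n$ immediately gives $\Ucal_\#[1] \subseteq \Ucal_\#$, which combined with (i) upgrades the description to $\Ucal_\# = {}^{\perp_{\leq 0}}\Ecal$. The existence of the t-structure $(\Ucal_\#,\Vcal_\#)$ then follows from a standard result about t-structures generated by a set in the compactly generated category $\Der(R)$: the smallest coaisle containing $\Ecal$ (closed under products, extensions, and negative shifts) has aisle precisely ${}^{\perp_{\leq 0}}\Ecal$. For (iii), one checks directly that each Koszul generator $K(I)[-n]$ of $\Ucal$ from Theorem~\ref{thmcompgen} lies in $\Ucal_\#$: its cohomology vanishes above degree $n$, and in each degree $m \leq n$ is supported on $V(I) \subseteq X_n \subseteq X_m$. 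The inclusion $\Ucal \subseteq \Ucal_\#$ then follows from $\Ucal_\#$ being closed under coproducts, suspensions, and extensions (as an aisle), and $\Vcal_\# \subseteq \Vcal$ follows by passing to right orthogonals.

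For \emph{(iv)}, one inclusion in each equality is already given by (iii). For the nontrivial inclusion on aisles, I would take $X \in \Ucal_\# \cap \Der^+(R)$ with $H^k(X) = 0$ for $k < N$ and use the truncation triangles $\tau^{\leq m-1} X \to \tau^{\leq m} X \to H^m(X)[-m]$ together with $X = \mathrm{hocolim}_m \tau^{\leq m} X$, noting that $\Ucal$ absorbs the associated Milnor triangle via its closure under coproducts and extensions. The whole argument then reduces to the subclaim that $M[-n] \in \Ucal$ whenever $\Supp M \subseteq X_n$. Using that $\Tcal_n$ is of finite type and generated by the modules $R/J$ with $V(J) \subseteq X_n$, this in turn reduces to $(R/J)[-n] \in \Ucal$, which one extracts from the generator $K(J)[-n] \in \Ucal$ via the triangle $\tau^{<n}(K(J)[-n]) \to K(J)[-n] \to (R/J)[-n]$ and a secondary induction controlling the lower Koszul cohomology (all supported on $V(J) \subseteq X_n$).

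For the dual inclusion on coaisles, I would take $X \in \Vcal \cap \Der^+(R)$ with $H^k(X) = 0$ for $k < N$ and verify $\Hom_{\Der(R)}(U,X) = 0$ for every $U \in \Ucal_\#$. Writing $U = \mathrm{hocolim}_m \tau^{[-m,m]} U$, each bounded piece $\tau^{[-m,m]} U$ inherits the support conditions, hence lies in $\Ucal_\# \cap \Der^+(R) = \Ucal$ by the aisle direction just established, and therefore has vanishing $\Hom$ into $X \in \Vcal$. The Milnor triangle for this hocolim then yields $\Hom_{\Der(R)}(U,X) = 0$ provided that the shifted groups $\Hom_{\Der(R)}(\tau^{[-m,m]} U, X[-1])$ also vanish; they do, because $\Vcal[-1] \subseteq \Vcal$ by cosuspension. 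The main obstacle will be the aisle subclaim that $M[-n] \in \Ucal$ for a general module $M \in \Tcal_n$, which demands delicate control of the closure of $\Ucal$ under the directed limits intrinsic to hereditary torsion classes of finite type, combined with the Koszul-based induction.
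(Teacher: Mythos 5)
Your overall strategy coincides with the paper's: parts (i)--(iii) are argued exactly as in the text (torsion pairs from Theorem~\ref{thm_torsion_pair}, the adjunction $\Hom_{\Der(R)}(X,E[-n])\cong\Hom_R(H^n(X),E)$ for injective $E$, and membership of the Koszul generators $K(I)[-n]$ in $\Ucal_\#$), and the aisle half of (iv) follows the same truncation-and-stalk reduction. However, there are two genuine problems. First, in (ii) you justify that ${}^{\perp_{\leq 0}}\Ecal$ is an aisle by appeal to ``a standard result about t-structures generated by a set.'' That standard result concerns aisles \emph{generated} by a set of objects; here the t-structure is \emph{cogenerated} by $\Ecal$, and for an arbitrary set of cogenerators the left orthogonal ${}^{\perp_{\leq0}}\Ecal$ is only a coproduct-closed pre-aisle, which is not known to be an aisle in general. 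The argument goes through precisely because the objects $E_n[-n]$ are shifts of injective modules, hence \emph{pure-injective} objects of $\Der(R)$, so that \cite[Corollary 5.4]{la19} applies; this hypothesis must be invoked explicitly.

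Second, in the coaisle half of (iv) your decomposition $U=\mathrm{hocolim}_m\,\tau^{[-m,m]}U$ does not exist: the canonical maps between the truncations $\tau^{\geq -m}$ go in the direction $\tau^{\geq -(m+1)}(-)\to\tau^{\geq -m}(-)$, so the doubly truncated pieces do not form a directed system with homotopy colimit $U$, and an object of $\Ucal_\#$ unbounded below cannot be recovered this way. The correct (and shorter) argument: for $X\in\Vcal\cap\Der^+(R)$ with $H^k(X)=0$ for $k<N$ and $U\in\Ucal_\#$, use the single triangle $\tau^{<N}U\to U\to\tau^{\geq N}U\to\tau^{<N}U[1]$; one has $\Hom_{\Der(R)}(\tau^{<N}U,X)=0$ because $\tau^{<N}U\in\Der^{\leq N-1}$ and $X\in\Der^{\geq N}$, while $\tau^{\geq N}U\in\Ucal_\#\cap\Der^+(R)=\Ucal$ by the aisle half, so $\Hom_{\Der(R)}(\tau^{\geq N}U,X)=0$ as well. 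Finally, your ``secondary induction'' for $(R/J)[-n]\in\Ucal$ is circular as sketched: to extract $(R/J)[-n]$ from $K(J)[-n]$ you need $\tau^{<n}(K(J)[-n])\in\Ucal$, whose cohomologies are again modules supported on $V(J)$ in lower degrees, i.e.\ instances of the very claim being proved. This step is a nontrivial lemma in the non-noetherian setting (the paper cites \cite[Lemma 5.3]{hr20} for it) and needs either that citation or a genuine induction on the number of Koszul generators.
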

\begin{proof}
    $(i)$: Recall from \S\ref{SS:torsion} that $\Tcal_n = \{M \in \ModR \mid \Supp(M) \subseteq X_n\}$ is a torsion class of a hereditary torsion pair $(\Tcal_n,\Fcal_n)$, which amounts to saying that there is a set $\Ecal_n$ of injective $R$-modules such that $\Tcal_n = \{M \in \ModR \mid \Hom_R(M,E) = 0 ~\forall E \in \Ecal_n\}$. Then  $\Ucal_\# = {}^{\perp_0}\Ecal$, where $\Ecal = \bigcup_{n \in \Z}\Ecal_n[-n]$ (see \cite[Lemma 3.2]{hr20}).

    $(ii)$: This follows from $(i)$, \cite[Corollary 5.4]{la19}, and the easy observation that $\Ucal_\#$ is a suspended subcategory of $\Der(R)$.

    $(iii)$: Since $\Vcal = \Ucal^{\perp_0}$ and $\Vcal_\# = \Ucal_\#^{\perp_0}$, it is clearly enough to show the first claim. Recall from \cite[\S 1.2]{ajs10} that $\Ucal$ is the smallest suspended subcategory of $\Ucal$ containing all objects $K(I)[-n]$ where $V(I) \subseteq X_n, n \in \Z$ closed under coproducts. Since $\Supp H^n(K(I)) \subseteq V(I)$ for any finitely generated ideal $I$ and any $n \in \Z$, we see that all the objects $K(I)[-n]$ belong to $\Ucal_\#$. Finally, since $\Ucal_\#$ is an aisle, we conclude that $\Ucal \subseteq \Ucal'$.

    $(iv)$: By $(iii)$ we already have inclusions $\Ucal \cap \Der^+(R) \subseteq \Ucal_\# \cap \Der^+(R)$ and $\Vcal \cap \Der^+(R) \supseteq \Vcal_\# \cap \Der^+(R)$. Let $X \in \Ucal_\# \cap \Der^+(R)$. Since the subcategory $\Ucal_\#$ is defined by cohomology, we clearly have that any soft (cohomological) truncation $\tau^{<n}(X)$ belongs to $\Ucal_\#$. Also, the aisle $\Ucal$ is closed under extensions and homotopy colimits \cite[Proposition 4.2]{ssv17}. Since $X$ can be expressed as a directed homotopy colimit of its soft truncations, $X \cong \mathrm{hocolim}_{n >0}\tau^{<n}(X)$, it is enough to show that $\tau^{<n}(X) \in \Ucal$ for each $n > 0$. Since $X \in \Der^+(R)$, $\tau^{<n}(X)$ is obtained by a finitely iterated extension of the cohomology stalks $H^k(X)[-k]$. Therefore, it is enough to consider the case of a stalk complex $X = M[-k]$ for an $R$-module $M$ satisfying $\Supp(M) \subseteq X_k$. The latter condition implies that any annihilator $I$ of an element of $M$ satisfies $V(I) \subseteq X_k$ (\cite[Proposition 2.11]{hs20}). Therefore, $M$ can be constructed from cyclic modules $R/I$ satisfying $V(I) \subseteq X_k$ using extensions and direct limits, and so $M[-k] \in \Ucal$ by \cite[Lemma 5.3]{hr20}.
\end{proof}
For any $R$-module $M$, let us denote by $E(M)$ the injective envelope of $M$ in $\ModR$.
\begin{lemma}\label{lem_tstr_techn}
    Let $(\Ucal,\Vcal)$ be a compactly generated t-structure in $\Der(R)$ corresponding to a Thomason filtration $\mathbb{X} = (X_n \mid n \in \Z)$. Then the following statements hold true:
    \begin{enumerate}
        \item[(i)] For any $\pp \in \Spec(R)$, $\kappa(\pp)[-n] \in \Ucal$ if and only if $\pp \in X_n$.
        \item[(ii)] For any $X \in \Vcal \cap \Der^+(R)$ we have $E(H^{\inf(X)}(X))[-\inf(X)] \in \Vcal$, where $\inf(X) = \min\{k \in \Z \mid H^k(X) \neq 0\}$,
        \item[(iii)] Let $\Ecal_n$ be the subcategory of $\ModR$ consisting of all injective $R$-modules $E$ such that $E[-n] \in \Vcal$. Then for any $\pp \in \Spec(R)$ we have $\Hom_R(\kappa(\pp),\Ecal_n) = 0$ if and only if $\pp \in X_n$.
    \end{enumerate}
\end{lemma}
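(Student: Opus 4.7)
The plan is to reduce each item to the more explicit structural description of $\Ucal_\#$ provided by Lemma~\ref{lem_bounded_dev}, together with the Garkusha--Prest correspondence of Theorem~\ref{thm_torsion_pair} identifying $\Tcal_n = \{M \in \ModR \mid \Supp(M) \subseteq X_n\}$ with its torsion-free class $\Fcal_n$ through an injective cogenerator.

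For $(i)$, since $\kappa(\pp)[-n] \in \Der^+(R)$, Lemma~\ref{lem_bounded_dev}(iv) reduces the problem to checking $\kappa(\pp)[-n] \in \Ucal_\#$, which is just the condition $\Supp(\kappa(\pp)) \subseteq X_n$. A direct computation yields $\Supp(\kappa(\pp)) = V(\pp)$, and since Thomason subsets are specialization-closed we have $V(\pp) \subseteq X_n$ if and only if $\pp \in X_n$.

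For $(ii)$, set $k = \inf(X)$. The key technical computation is that for any $R$-module $N$, the truncation triangle $H^k(X)[-k] \to X \to \tau^{>k}(X) \to H^k(X)[-k+1]$ combined with vanishing of $\Hom$'s across the standard t-structure yields $\Hom_{\Der(R)}(N[-k], X) \cong \Hom_R(N, H^k(X))$. Together with the well-known $\Hom_{\Der(R)}(U, E[-k]) \cong \Hom_R(H^k(U), E)$ for any injective $E$, this shows that for an injective module $E$, the condition $E[-k] \in \Vcal$ is equivalent to $E \in \Fcal_k$. To apply this with $E = E(H^k(X))$, assume for contradiction that some $M \in \Tcal_k$ admits a nonzero map $f$ into $E(H^k(X))$; essentiality of the injective envelope produces $m \in M$ with $0 \neq f(m) \in H^k(X)$, hence a nonzero map $Rm \to H^k(X)$. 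But $Rm \in \Tcal_k$, so $Rm[-k] \in \Ucal$ by Lemma~\ref{lem_bounded_dev}(iv), which combined with $X \in \Vcal$ and the Hom isomorphism above forces $\Hom_R(Rm, H^k(X)) = 0$, a contradiction.

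For $(iii)$, the direction $\pp \in X_n \Rightarrow \Hom_R(\kappa(\pp), \Ecal_n) = 0$ follows at once from $(i)$ and orthogonality of the aisle and coaisle. Conversely, if $\pp \notin X_n$, take $E = E(\kappa(\pp))$, which obviously satisfies $\Hom_R(\kappa(\pp), E) \neq 0$; it remains to check $E \in \Fcal_n$, equivalently $E[-n] \in \Vcal$. If some $M \in \Tcal_n$ mapped nontrivially into $E$, essentiality produces a cyclic submodule $Rm \cong R/J$ in $\Tcal_n$ (so $V(J) \subseteq X_n$) admitting a nonzero map to $\kappa(\pp)$; but any such map must annihilate $J$, forcing $J \subseteq \pp$ because $\kappa(\pp)$ is a field whose only nonzero $R$-submodule is itself, hence $\pp \in V(J) \subseteq X_n$, a contradiction. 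The main obstacle is the Hom-computation $\Hom_{\Der(R)}(N[-k], X) \cong \Hom_R(N, H^k(X))$ used in $(ii)$, where one must carefully verify the vanishings from the truncation triangle, and the repeated translations between statements about $\Vcal$ and those about the classical torsion-free class $\Fcal_k$ via the characterization of membership in $\Vcal$ on degree-shifted injective modules.
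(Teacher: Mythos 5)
Your overall strategy coincides with the paper's: parts (i) and (iii) are reduced to the cohomologically described aisle $\Ucal_\#$ of Lemma~\ref{lem_bounded_dev} and to the hereditary torsion pair $(\Tcal_n,\Fcal_n)$, exactly as in the paper's proof; for part (ii) the paper simply cites \cite[Lemma 3.3]{hr20}, whereas you reprove that lemma from scratch via the truncation triangle $H^k(X)[-k]\to X\to\tau^{>k}(X)$ and the identification, for injective $E$, of the condition $E[-k]\in\Vcal$ with $E\in\Fcal_k$. That self-contained argument is correct (the two vanishings coming from the standard t-structure check out), and your treatment of (iii) is in fact slightly more complete than the paper's, since you justify explicitly why $E(\kappa(\pp))\in\Fcal_n$ when $\pp\notin X_n$, a step the paper passes over quickly. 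In (i), your computation $\Supp(\kappa(\pp))=V(\pp)$ together with specialization-closedness of Thomason sets gives the right conclusion.

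One justification is wrong as stated, though the conclusion you draw from it is correct. In (iii) you assert that ``$\kappa(\pp)$ is a field whose only nonzero $R$-submodule is itself''; this is false in general (e.g.\ $R=\Z$, $\pp=0$, $\kappa(\pp)=\Q$ has many $\Z$-submodules). What you actually need is that every nonzero element $y$ of $\kappa(\pp)$ has annihilator exactly $\pp$: the $R$-action factors through $R/\pp\hookrightarrow\kappa(\pp)$, and multiplication by a nonzero element of a field is injective, so $Jy=0$ with $y\neq 0$ forces $J\subseteq\pp$. With that one-line repair, $\pp\in V(J)\subseteq X_n$ gives the desired contradiction and the proof stands.
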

\begin{proof}
    $(i)$: By Lemma~\ref{lem_bounded_dev}(iv), $\kappa(\pp)[-n] \in \Ucal$ if and only if $\kappa(\pp)[-n] \in \Ucal_\#$, or equivalently, $\Supp (\kappa(\pp)) = \{\pp\} \subseteq X_n$.

    $(ii)$: This follows from \cite[Lemma 3.3]{hr20}.

    $(iii)$: If $E[-n]$ belongs to $\Vcal$ then $\Hom_R(\kappa(\pp),E) = \Hom_{\Der(R)}(\kappa(\pp)[-n],E[-n]) = 0$ for any $\pp \in X_n$ because $\kappa(\pp)[-n] \in \Ucal$ by $(i)$. For the converse, assume that $\pp \not\in X_n$ and let $X \in \Ucal$. Since $E(\kappa(\pp))$ is injective, we have $\Hom_{\Der(R)}(X,E(\kappa(\pp))[-n]) = \Hom_R(H^n(X),E(\kappa(\pp)))$ by \cite[Lemma 3.2]{hr20}. By Lemma~\ref{lem_bounded_dev}(iii), $\Supp H^n(X) \subseteq X_n$, and so $\Hom_R(H^n(X),E(\kappa(\pp))) = 0$. This shows that $E(\kappa(\pp))[-n] \in \Ucal^{\perp_0} = \Vcal$, and therefore $E(\kappa(\pp)) \in \Ecal_n$. Then clearly $\Hom_R(\kappa(\pp),\Ecal_n) \neq 0$.
\end{proof}
\subsection{Compatible families of Thomason sets}\label{ss_comp_thom}
Our next step is to show how Thomason sets can be glued together from local data over all localizations at maximal ideals. Let $X(\mm)$ be a Thomason subset of $\Spec(R_\mm)$ for each $\mm \in \mSpec(R)$. If $Y$ is any subset of $\Spec(R_\mm)$, we will denote by $Y^*$ its image under the natural inclusion $\Spec(R_\mm) \xhookrightarrow{} \Spec(R)$ induced by the localization map (the choice of maximal ideal $\mm$ will always be clear from the context). Note that $Y = \{\pp_\mm \mid \pp \in Y^*\} \subseteq \Spec(R_\mm)$. We consider the following condition for the family $\{X(\mm) \mid \mm \in \mSpec(R)\}$:
\begin{equation}\label{gluingcondition}\tag{$\dagger$}
    \forall \mm,\mathfrak{m'} \in \mSpec(R): \{\pp \in X(\mm)^* \mid \pp \subseteq \mathfrak{m'}\} = \{\pp \in X(\mathfrak{m'})^* \mid \pp \subseteq \mm\}.
\end{equation}
\begin{lemma}\label{lem4.9}
    In the setting as above, put $X = \bigcup_{\mm \in \mSpec(R)}X(\mm)^*$. Then the following conditions are equivalent:
    \begin{enumerate}
        \item[(i)] The set $\{X(\mm) \mid \mm \in \mSpec(R)\}$ satisfies condition (\ref{gluingcondition}) and $X$ is a Thomason subset of $\Spec(R)$,
        \item[(ii)] $X = \bigcup_{I \in \Ical} V(I)$ where $\Ical$ is the set of all finitely generated ideals of $R$ such that $V(I_\mm) \subseteq X(\mm)$ for all $\mm \in \mathrm{Spec(R)}$.
    \end{enumerate}
\end{lemma}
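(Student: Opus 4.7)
The plan is to prove the two implications separately, in each case using that $\pp \in X(\mm)^*$ is equivalent to $\pp \subseteq \mm$ together with $\pp_\mm \in X(\mm)$, and the characterization of Thomason subsets as unions $\bigcup V(I)$ for finitely generated $I$. The direction (ii)$\Rightarrow$(i) is nearly automatic for the Thomason claim, since $X = \bigcup_{I \in \Ical}V(I)$ is already of the required form. For condition~(\ref{gluingcondition}), I would take $\pp \in X(\mm)^*$ with $\pp \subseteq \mm'$; then $\pp \in X$, and (ii) yields some $I \in \Ical$ with $I \subseteq \pp$. The containments $I \subseteq \pp \subseteq \mm'$ give $I_{\mm'} \subseteq \pp_{\mm'}$, and since $V(I_{\mm'}) \subseteq X(\mm')$ by the very definition of $\Ical$, this forces $\pp \in X(\mm')^*$; symmetry handles the reverse containment.

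For (i)$\Rightarrow$(ii), the inclusion $\bigcup_{I\in \Ical}V(I) \subseteq X$ requires no gluing: given $\pp \in V(I)$ with $I \in \Ical$, any maximal $\mm$ containing $\pp$ satisfies $\pp_\mm \in V(I_\mm) \subseteq X(\mm)$, so $\pp \in X(\mm)^* \subseteq X$. For the reverse inclusion I would pick $\pp \in X$ and use Thomason-ness of $X$ to obtain a finitely generated $I \subseteq \pp$ with $V(I) \subseteq X$; the substance of the argument is then verifying $I \in \Ical$. Given $\mm \in \mSpec(R)$ and $\mathfrak{q}_\mm \in V(I_\mm)$, one has $\mathfrak{q} \subseteq \mm$ and $I \subseteq \mathfrak{q}$, so $\mathfrak{q} \in V(I) \subseteq X$, i.e.\ $\mathfrak{q} \in X(\mm')^*$ for some maximal $\mm'$; applying~(\ref{gluingcondition}) to the pair $\{\mm',\mm\}$ (with $\mathfrak{q} \subseteq \mm$) then transfers this to $\mathfrak{q} \in X(\mm)^*$, giving $\mathfrak{q}_\mm \in X(\mm)$ as required.

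The main obstacle I anticipate is this last step: the gluing condition is invoked precisely there, in order to produce a \emph{single} finitely generated ideal simultaneously compatible with every $X(\mm)$. Without $(\dagger)$ a Thomason witness $V(I) \subseteq X$ extracted from the Thomason structure of $X$ need not satisfy $V(I_{\mm'}) \subseteq X(\mm')$ at every maximal ideal, and supplying exactly this uniform local compatibility across all $\mm \in \mSpec(R)$ is what $(\dagger)$ is designed to do.
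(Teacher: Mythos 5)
Your proof is correct and follows essentially the same argument as the paper's: both directions hinge on the observation that $\pp \in X(\mm)^*$ iff $\pp \subseteq \mm$ and $\pp_\mm \in X(\mm)$, and both use $(\dagger)$ exactly where you flag it — to transfer a witness $V(I)\subseteq X$ produced by Thomason-ness of $X$ into the uniform local condition $V(I_\mm)\subseteq X(\mm)$ for every $\mm$. The paper merely interposes the intermediate description $X = \{\pp \mid V(\pp_\mm)\subseteq X(\mm)\ \forall\mm\}$ before invoking Thomason-ness, but that is a cosmetic reorganization of what you wrote.
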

\begin{proof}
    $(i) \implies (ii):$ Set $X' =  \bigcup_{I \in \Ical} V(I)$ and let us show $X = X'$. Fix $I \in \Ical$, $\pp \in V(I)$, and $\mm \in \mSpec(R)$. Since $V(I_\mm) \subseteq X(\mm)$, we clearly have $\pp_\mm \in X(\mm)$, showing that $X' \subseteq X$. Conversely, assume that $\pp \in X(\mm)^* \subseteq X$ and let $\mathfrak{m'}$ be another maximal ideal of $R$. Then there are two possibilities. Either $\pp \subseteq \mathfrak{m'}$, and then $\pp \in X(\mathfrak{m'})^*$ by the condition (\ref{gluingcondition}). The other possibility is that $\pp \not\subseteq \mathfrak{m'}$, and then $\pp_\mathfrak{m'} = R_\mathfrak{m'}$. Together we proved that $X = \{\pp \in \Spec(R) \mid V(\pp_\mm) \subseteq X(\mm) ~\forall \mm \in \mSpec(R)\}$. Since $X$ is Thomason, for any $\pp \in X$ there is a finitely generated ideal $I$ of $R$ such that $\pp \in V(I) \subseteq X$. For any $\mm \in \mSpec(R)$ we have $V(I_\mm)  = \{\pp_\mm \mid \pp \in V(I)\}$, and so $V(I_\mm) \subseteq X(\mm)$, as desired.

    $(ii) \implies (i):$ First, the condition $(ii)$ clearly implies that $X$ is Thomason. We are left with proving that the set $\{X(\mm) \mid \mm \in \mSpec(R)\}$ satisfies condition (\ref{gluingcondition}). Let $\pp \in X(\mm)^*$ satisfy $\pp \subseteq \mathfrak{m'}$ and let us show that $\pp \in X(\mathfrak{m'})^*$. By $(ii)$ there is a finitely generated ideal of $R$ such that $\pp \in V(I) \subseteq X$ and such that $V(I_\mathfrak{m'}) \subseteq X(\mathfrak{m'})$. This already testifies that $\pp \in X(\mathfrak{m'})^*$.
\end{proof}
\begin{definition}\label{def:compatible-of-Thomason}
    Let $X(\mm)$ be a Thomason subset of $\Spec(R_\mm)$ for each $\mm \in \mSpec(R)$. The family $\{X(\mm) \mid \mm \in \mSpec(R)\}$ is called \emph{compatible} if it satisfies the equivalent conditions of Lemma~\ref{lem4.9}.

    Let $\mathbb{X}(\mm)  = (X(\mm)_n \mid n \in \Z) $ be a Thomason filtration of $\Spec(R_\mm)$ for each $\mm \in \mSpec(R)$. The family $\{\mathbb{X}(\mm) \mid \mm \in \mSpec(R)\}$ is called \emph{compatible} if $\{X(\mm)_n \mid \mm \in \mSpec(R)\}$ is a compatible family of Thomason subsets for each $n \in \Z$.
\end{definition}
\begin{remark}\label{rem_noeth}
    If $R$ is noetherian then the assumption of $X$ being a Thomason set in condition (i) of Lemma~\ref{lem4.9} is superfluous. Indeed, the proof of the lemma shows that $X = \{\pp \in \Spec(R) \mid V(\pp_\mm) \subseteq X(\mm) ~\forall \mm \in \mSpec(R)\}$ holds even without such assumption, and this is enough to see that $X$ is a specialization closed subset of $\Spec(R)$, which for noetherian rings amounts to $X$ being Thomason. It follows that our compatibility condition generalizes the one used in \cite[Definition 2.3]{tr14}. Indeed, as we just observed, for noetherian rings our compatibility condition for a family of Thomason sets boils down to (\ref{gluingcondition}), which is precisely the condition used by Trlifaj and Şahinkaya.

    On the other hand, the condition of $X$ being Thomason is not superfluous for rings which are not noetherian, in general. Indeed, let $R = k^\omega$ be the countably infinite product of a field $k$ and choose a non-principal maximal ideal $\mm$ of $R$. Set $X(\mm) = \{\mm_\mm\}$ and $X(\mathfrak{m'}) = \emptyset$ for any maximal ideal $\mathfrak{m'} \neq \mm$. Clearly, this family satisfies (\ref{gluingcondition}). However $X = \bigcup_{\mathfrak{n} \in \mSpec(R)} X(\mathfrak{n})^* = \{\mm\}$, which is well-known not to be a Thomason set, as there is no idempotent $e$ of $R$ with $V(e) = \{\mm\}$.
\end{remark}

\begin{proposition}\label{prop_corr_thom}
    There is a bijective correspondence
    $$\left \{ \begin{tabular}{ccc} \text{ Thomason subsets $X$ }  \\ \text{ of $\Spec(R)$ } \end{tabular}\right \}  \xleftrightarrow{1-1}  \left \{ \begin{tabular}{ccc} \text{ Compatible families } \\ \text{ $\{X(\mm) \mid \mm \in \mSpec(R)\}$ of Thomason subsets } \end{tabular}\right \}$$
    induced by the mutually inverse assignments
    $$X \mapsto X(\mm) = X_\mm = \{\pp_\mm \mid \pp \in X\} ~\forall \mm \in \mSpec(R)$$ and
    $$\{X(\mm) \mid \mm \in \mSpec(R)\} \mapsto X = \bigcup_{\mm \in \mSpec(R)}X(\mm)^*.$$
    This bijection naturally extends to a bijection
    $$\left \{ \begin{tabular}{ccc} \text{ Thomason filtrations $\mathbb{X}$ }  \\ \text{ of $\Spec(R)$ } \end{tabular}\right \}  \xleftrightarrow{1-1}  \left \{ \begin{tabular}{ccc} \text{ Compatible families } \\ \text{ $\{\mathbb{X}(\mm) \mid \mm \in \mSpec(R)\}$ of Thomason filtrations }\end{tabular}\right \}.$$
\end{proposition}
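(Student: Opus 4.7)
The plan is to verify that both assignments are well-defined and mutually inverse, then to reduce the filtration statement to the subset statement pointwise. The central fact that drives everything is the classical bijection between $\{\pp \in \Spec(R) \mid \pp \subseteq \mm\}$ and $\Spec(R_\mm)$ given by $\pp \mapsto \pp_\mm$, together with the reformulation of compatibility provided by Lemma~\ref{lem4.9}(ii).

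First, I would check that $X \mapsto \{X_\mm \mid \mm \in \mSpec(R)\}$ actually produces a compatible family. Write $X = \bigcup_{I \in \Ical} V(I)$ for finitely generated ideals $I$; then $X_\mm = \bigcup_{I \in \Ical} V(I_\mm)$, which is a Thomason subset of $\Spec(R_\mm)$. To see compatibility, I verify condition (ii) of Lemma~\ref{lem4.9}. Clearly every $I \in \Ical$ lies in the set $\Ical'$ of finitely generated ideals $I$ with $V(I_\mm) \subseteq X_\mm$ for all $\mm$, so $X \subseteq \bigcup_{I \in \Ical'} V(I)$. Conversely, for $I \in \Ical'$ and $\pp \in V(I)$, pick a maximal ideal $\mm \supseteq \pp$; then $\pp_\mm \in V(I_\mm) \subseteq X_\mm$, so $\pp_\mm = \qq_\mm$ for some $\qq \in X$ with $\qq \subseteq \mm$, and the bijection above forces $\pp = \qq \in X$. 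On the other side, $\{X(\mm)\} \mapsto \bigcup_\mm X(\mm)^*$ lands among Thomason sets by Lemma~\ref{lem4.9}(i).

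Next, I would check both compositions are identities. Starting from Thomason $X$, every $\pp \in X$ lies in $X(\mm)^*$ for any maximal $\mm \supseteq \pp$, and conversely $\pp \in X(\mm)^* = \{\qq \subseteq \mm : \qq_\mm \in X_\mm\}$ forces $\pp_\mm = \qq_\mm$ for some $\qq \in X$ and hence $\pp = \qq \in X$ by the prime bijection. Starting from a compatible family $\{X(\mm)\}$ and setting $X = \bigcup_{\mathfrak{m'}} X(\mathfrak{m'})^*$, the inclusion $X(\mm) \subseteq X_\mm$ is immediate by taking any $\qq \in X(\mm)$ and pulling back to some $\pp \in X(\mm)^* \subseteq X$. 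For the reverse inclusion, take $\pp_\mm \in X_\mm$ with $\pp \in X$; then $\pp \in X(\mathfrak{m'})^*$ for some $\mathfrak{m'}$, and since automatically $\pp \subseteq \mm$, the gluing condition~(\ref{gluingcondition}) (which holds by Lemma~\ref{lem4.9}) gives $\pp \in X(\mm)^*$, i.e.\ $\pp_\mm \in X(\mm)$.

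For the extension to filtrations, both the definition of a Thomason filtration and the notion of a compatible family of Thomason filtrations are purely levelwise in $n \in \Z$, and the subset bijection clearly preserves the inclusion $X_{n+1} \subseteq X_n$ in both directions (localization and $(-)^*$ are both inclusion-preserving). Hence applying the subset bijection at each $n$ yields the filtration bijection. I do not expect a real obstacle here: the entire argument is a bookkeeping exercise translating the condition (\ref{gluingcondition}) through the Zariski prime bijection. The only step that requires a moment of care is recognizing that in the direction ``Thomason $\mapsto$ compatible family'' one must verify the stronger condition~(ii) of Lemma~\ref{lem4.9} rather than just (\ref{gluingcondition}), since the latter alone does not guarantee $X$ is Thomason in the non-noetherian setting (cf.\ Remark~\ref{rem_noeth}).
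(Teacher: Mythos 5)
Your proof is correct and takes essentially the same route as the paper: reducing everything to Lemma~\ref{lem4.9} and the bijection $\{\pp \in \Spec(R) \mid \pp \subseteq \mm\} \cong \Spec(R_\mm)$, then applying the subset bijection levelwise for filtrations. The only cosmetic difference is that you verify compatibility of $\{X_\mm\}$ via condition~(ii) of Lemma~\ref{lem4.9}, whereas the paper checks condition~(i) directly (condition~(\ref{gluingcondition}) follows at once from $X_\mm^* = X \cap \Spec(R_\mm)^*$, and $X = \bigcup_\mm X_\mm^*$ is Thomason by hypothesis); both are valid by the equivalence in that lemma.
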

\begin{proof}
    Let $X$ be a Thomason subset of $\Spec(R)$ and put $X_\mm = \{\pp_\mm \mid \pp \in X\}$ for each maximal ideal $\mm$. Clearly, $X_\mm^* = X \cap \Spec(R_\mm)^*$ and so the condition (\ref{gluingcondition}) is satisfied. Also, $X = \bigcup_{\mm \in \mSpec(R)}X_\mm^*$ is Thomason, showing that the family $\{X_\mm \mid \mm \in \mSpec(R)\}$ is compatible. It follows from Lemma~\ref{lem4.9} that if a given family $\{X(\mm) \mid \mm \in \mSpec(R)\}$ is compatible, then $X(\mm) = X_\mm = \{\pp_\mm \mid \pp \in X\}$, which establishes that the assignments are mutually inverse.

    For the claim about Thomason filtrations, it is enough to notice that for two Thomason subsets $X,Y$ we have $X \subseteq Y$ if and only if $X_\mm \subseteq Y_\mm$ for each maximal ideal $\mm$, as the sets $\Spec(R_\mm)^*, \mm \in \mSpec(R)$ form a cover of $\Spec(R)$.
\end{proof}
\subsection{Local-global property of compact generation} The gluing condition on Thomason sets allows us to prove the following useful local characterization of when a homotopically smashing t-structure is compactly generated. Indeed, it allows us to construct a family of compact generators out of a family of compact generators for each localization at maximal ideal. Before that, we need a relatively straightforward observation on the injective $R$-modules living in torsion-free classes of hereditary torsion pairs.

Let $\InjR$ denote the subcategory of all injective $R$-modules. We will consider subcategories $\Ecal$ of $\InjR$ which reflect the closure properties of torsion-free classes of hereditary torsion pairs of finite type. Two obvious conditions are that $\Ecal$ needs to be closed under products and direct summands. Furthermore, the fact that the torsion-free class is closed under direct limit will be reflected by the following condition on the subcategory $\Ecal \subseteq \InjR$:
\begin{equation}\label{injectivecondition}\tag{$\dagger\dagger$}
\text{For any direct system $(E_i \mid i \in I)$ in $\Ecal$, the injective envelope of its colimit belongs to $\Ecal$.}
\end{equation}
\begin{lemma}\label{lem_cog_inj}
    There is a bijection
    $$\left \{ \begin{tabular}{ccc} \text{ Hereditary torsion pairs $(\Tcal,\Fcal)$} \\ \text{in $\ModR$} \end{tabular}\right \}  \xleftrightarrow{1-1}  \left \{ \begin{tabular}{ccc} \text{ Subcategories $\Ecal$ of $\InjR$} \\ \text{ closed under products and direct summands } \end{tabular}\right \}$$
    provided by the mutually inverse assignments
    $$\Fcal \mapsto \Ecal = \InjR \cap \Fcal$$
    and
    $$\Ecal \mapsto \Tcal = \{M \in \ModR \mid \Hom_R(M,\Ecal) = 0 \}.$$
    Furthermore, the torsion pair $(\Tcal,\Fcal)$ is of finite type (that is, $\Fcal$ is closed under direct limits) if and only if the corresponding subcategory $\Ecal$ satisfies condition (\ref{injectivecondition}).
\end{lemma}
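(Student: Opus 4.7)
The claim has two parts: the bijection itself, and the refinement via (\ref{injectivecondition}). I will sketch both, locating the main obstacle in the second part.

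For the bijection, the assignment $\Fcal \mapsto \Ecal := \InjR \cap \Fcal$ clearly lands in subcategories closed under products and direct summands. Given $\Ecal \subseteq \InjR$ with these closure properties, the class $\Tcal := \{M \in \ModR \mid \Hom_R(M,\Ecal) = 0\}$ is closed under submodules (by injectivity of $\Ecal$, any morphism from a submodule extends and then vanishes), quotients, coproducts, and extensions, so the pair $(\Tcal,\Fcal)$, with the torsion-free class $\Fcal := \{N \in \ModR \mid \Hom_R(\Tcal,N) = 0\}$, forms a hereditary torsion pair. The nontrivial verification is that the two maps are mutually inverse. To show $\InjR \cap \Fcal \subseteq \Ecal$ for the given $\Ecal$, pick $E \in \InjR \cap \Fcal$ and set $K := \bigcap \{\ker(f) \mid f \in \Hom_R(E,E'),\ E' \in \Ecal\}$. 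Any $g : K \to E'$ with $E' \in \Ecal$ extends by injectivity to $\tilde g : E \to E'$, which lies in the defining family of $K$ and hence vanishes on $K$, giving $g = 0$; thus $K \in \Tcal$. Since $K \subseteq E \in \Fcal$ and $\Fcal$ is closed under submodules, $K = 0$. Separating nonzero elements of $E$ by suitable morphisms into objects of $\Ecal$ yields an embedding of $E$ into a set-indexed product of objects of $\Ecal$; closure under products places this product in $\Ecal$, and since $E$ is injective it splits off as a direct summand, whence $E \in \Ecal$. The other composition uses the torsion decomposition $0 \to t(M) \to M \to M/t(M) \to 0$: heredity yields $E(M/t(M)) \in \Fcal \cap \InjR = \Ecal$, and the assumption $\Hom_R(M,\Ecal) = 0$ forces $M/t(M) = 0$, so $M \in \Tcal$.

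The forward direction of the finite-type characterization is immediate: for a direct system $(E_i)$ in $\Ecal \subseteq \Fcal$, closure of $\Fcal$ under direct limits gives $\varinjlim E_i \in \Fcal$, and heredity then yields $E(\varinjlim E_i) \in \Fcal \cap \InjR = \Ecal$.

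The reverse direction is the main obstacle. Assuming (\ref{injectivecondition}), given a direct system $(M_i)$ in $\Fcal$ with colimit $M$, I aim to embed $M$ into an object of $\Ecal$; this suffices since $\Fcal$ is closed under submodules and $\Ecal \subseteq \Fcal$. After well-ordering, the problem reduces to a continuous chain $(M_\alpha)_{\alpha < \lambda}$, along which I construct by transfinite recursion a compatible chain $(E_\alpha)$ in $\Ecal$ together with monomorphisms $M_\alpha \hookrightarrow E_\alpha$ commuting with the transitions. Naively taking $E_\alpha = E(M_\alpha)$ fails because extensions along injective hulls are not unique and therefore need not compose. The workaround at a successor step is to set $E_{\alpha+1} := E_\alpha \times E(M_{\alpha+1})$, which lies in $\Ecal$ by the product closure, and to define the transition $E_\alpha \to E_{\alpha+1}$ as $e \mapsto (e,\psi(e))$ and the embedding $M_{\alpha+1} \to E_{\alpha+1}$ as $m \mapsto (\sigma(m),\iota(m))$, where $\psi : E_\alpha \to E(M_{\alpha+1})$ extends the composite $M_\alpha \hookrightarrow M_{\alpha+1} \hookrightarrow E(M_{\alpha+1})$ and $\sigma : M_{\alpha+1} \to E_\alpha$ extends $M_\alpha \hookrightarrow E_\alpha$; both maps are then monic and all squares commute. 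At a limit ordinal $\alpha$ I set $E_\alpha := E(\varinjlim_{\beta < \alpha} E_\beta)$, which belongs to $\Ecal$ by exactly (\ref{injectivecondition}), using that filtered colimits in $\ModR$ preserve monomorphisms. A final application of (\ref{injectivecondition}) to $(E_\alpha)_{\alpha<\lambda}$ gives an object of $\Ecal$ containing $\varinjlim E_\alpha$, which in turn contains $M = \varinjlim M_\alpha$ as a submodule, and the argument closes.
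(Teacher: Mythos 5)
Your treatment of the bijection itself is correct, and your separation argument (intersect the kernels of all maps from $E$ into objects of $\Ecal$, show that this intersection is torsion and hence zero, then embed $E$ into a set-indexed product of objects of $\Ecal$ and split it off) is a valid alternative to the paper's route, which instead proves $\Fcal = \Sub(\Ecal)$ and then uses closure under direct summands. The forward direction of the finite-type statement also matches the paper.

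The genuine gap is in the reverse direction, exactly where you yourself located the difficulty. The reduction you invoke is not the one that is available: closure under arbitrary direct limits reduces (by \cite[Lemma 2.14]{gt}, which the paper cites) to closure under direct limits of \emph{well-ordered direct systems}, whose transition maps $f_{\alpha,\alpha+1}\colon M_\alpha\to M_{\alpha+1}$ need not be monomorphisms. You instead reduce to a ``continuous chain'', i.e.\ a well-ordered system with monomorphic transitions, and give no justification for this stronger reduction; it is not automatic, since replacing each $M_i$ by its image in $M=\varinjlim M_i$ produces quotients of the $M_i$, which need not lie in $\Fcal$. Your recursion genuinely uses the monomorphy: a map $\sigma\colon M_{\alpha+1}\to E_\alpha$ with $\sigma\circ f_{\alpha,\alpha+1}$ equal to the monomorphism $M_\alpha\hookrightarrow E_\alpha$ cannot exist when $f_{\alpha,\alpha+1}$ has nonzero kernel. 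The repair is to extend in the other direction, which is exactly what \cite[Lemma 3.5]{hr20} (cited by the paper for this purpose) does: take $E_{\alpha+1}=E(M_{\alpha+1})$ and define the transition $E_\alpha\to E_{\alpha+1}$ as an extension of $M_\alpha\to M_{\alpha+1}\hookrightarrow E(M_{\alpha+1})$ along the monomorphism $M_\alpha\hookrightarrow E_\alpha$, and at limit stages extend along the monomorphism $\varinjlim_{\beta<\lambda}M_\beta\to\varinjlim_{\beta<\lambda}E_\beta$. This requires no injectivity of the transition maps of $(M_\alpha)$ and renders your auxiliary product $E_\alpha\times E(M_{\alpha+1})$ unnecessary. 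With that modification the rest of your argument (exactness of filtered colimits and a final application of (\ref{injectivecondition})) goes through and coincides with the paper's proof.
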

\begin{proof}
    First, it is routine to check that both the assignments are well-defined. If $(\Tcal,\Fcal)$ is a hereditary torsion pair then $\Fcal$ is closed under injective envelopes, and therefore $\Tcal = \{M \in \ModR \mid \Hom_R(M,\Ecal) = 0 \}$. To establish that the assignments are mutually inverse, the only non-trivial step is to check that if $\Ecal$ is closed under products and direct summands then the corresponding torsion pair $(\Tcal,\Fcal)$ satisfies $\Fcal \cap \InjR \subseteq \Ecal$. To see this, it is enough to prove that $\Fcal = \Sub(\Ecal)$, the subcategory of all submodules of modules from $\Ecal$. Since $\Fcal$ is the smallest torsion-free class containing $\Ecal$, it is further enough to show that $\Sub(\Ecal)$ is a torsion-free class in $\ModR$. Clearly, $\Sub(\Ecal)$ is closed under subobjects, and since $\Ecal$ is closed under products, so is $\Sub(\Ecal)$. Therefore, we only need to see that $\Sub(\Ecal)$ is closed under extensions. Let $0 \rightarrow A_0 \rightarrow B \rightarrow A_1 \rightarrow 0$ be an exact sequence with $A_i \subseteq E_i$ where $E_i$ belongs to $\Ecal$ for both $i=0,1$. The injectivity of $E_0$ allows to extend the inclusion $A_0 \subseteq E_0$ to a map $B \rightarrow E_0$, which easily yields an embedding of $B$ into $E_0 \oplus E_1$. Since $\Ecal$ is closed under products, this shows $B \in \Sub(\Ecal)$, as desired.

    Finally we show that $\Fcal$ is closed under direct limits if and only if $\Ecal$ satisfies (\ref{injectivecondition}). If $\Fcal$ is closed under direct limits then for any direct system $E_i, i \in I$ in $\Ecal$ we have $\varinjlim_{i \in I}E_i \in \Fcal$. Since $\Fcal$ is closed under injective envelopes, the injective envelope of this direct limit belongs to $\Ecal$. Conversely, let $\Ecal$ satisfy (\ref{injectivecondition}). To establish that $\Fcal$ is closed under direct limits, it suffices to show that it is closed under direct limits of well-ordered systems \cite[Lemma 2.14]{gt}. Let $(F_\alpha \mid \alpha < \lambda$) be such a direct system in $\Fcal$ and denote its direct limit by $F = \varinjlim_{\alpha < \lambda}F_\alpha$. By \cite[Lemma 3.5]{hr20}, there is a direct system $E_\alpha, \alpha < \lambda$ such that $E_\alpha$ is the injective envelope of $F_\alpha$ for each $\alpha < \lambda$, and such that the envelope embeddings $F_\alpha \subseteq E_\alpha$ induce a map between the two direct systems. Since $\Ecal$ satisfies (\ref{injectivecondition}), the injective envelope $E$ of the direct limit $M = \varinjlim_{\alpha < \lambda}E_\alpha$ belongs to $\Ecal$. By the properties of the direct systems constructed and by exactness of the direct limit functor, we have a natural limit monomorphism $F \rightarrow M$, and therefore $F$ embeds into $E$. We conclude that $F \in \Fcal$ as desired.
\end{proof}
We are ready to prove a key result of this section showing that the property of a homotopically smashing t-structure being compactly generated is a local-global property with respect to the cover $\Spec(R) = \bigcup_{\mm \in \mSpec(R)}\Spec(R_\mm)^*$.
\begin{proposition}\label{prop_cg_loc}
    Let $(\Ucal,\Vcal)$ be a homotopically smashing t-structure in $\Der(R)$. For each maximal ideal $\mm$, let $(\Ucal_\mm,\Vcal_\mm)$ be the localized t-structure in $\Der(R_\mm)$ of Lemma~\ref{lemma_tstr_loc}. Then:
    \begin{enumerate}
        \item[(i)] The t-structure $(\Ucal,\Vcal)$ is compactly generated in $\Der(R)$ if and only if the t-structure $(\Ucal_\mm,\Vcal_\mm)$ is compactly generated in $\Der(R_\mm)$ for each $\mm \in \mSpec(R)$.
        \item[(ii)] Assume that $(\Ucal,\Vcal)$ is compactly generated and corresponds to a Thomason filtration $\mathbb{X}$. Let $\mathbb{X}(\mm)$ be a Thomason filtration corresponding to the t-structure $(\Ucal_\mm,\Vcal_\mm)$ for each $\mm \in \mSpec(R)$. Then $\{\mathbb{X}(\mm) \mid \mm \in \mSpec(R)\}$ is a compatible family of Thomason filtrations corresponding via Proposition~\ref{prop_corr_thom} to the Thomason filtration $\mathbb{X}$ on $\Spec(R)$.
    \end{enumerate}
\end{proposition}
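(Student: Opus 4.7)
The plan is to dispatch the forward direction of (i) by directly localizing compact generators, and then to use Lemma~\ref{lem_tstr_techn}(iii) applied both globally and locally as the unifying tool for part (ii) and the backward direction of (i).

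For the forward direction of (i), if $\Vcal = \Scal^{\perp_0}$ with $\Scal \subseteq \Der(R)^c$, then Lemma~\ref{lem_adjunction}(ii) together with $\Vcal_\mm = \Vcal \cap \Der(R_\mm)$ from Lemma~\ref{lemma_tstr_loc}(iii) immediately yields $\Vcal_\mm = \{S_\mm \mid S \in \Scal\}^{\perp_0}$ in $\Der(R_\mm)$, with each $S_\mm$ a perfect complex, hence compact. For part (ii), the local t-structures $(\Ucal_\mm, \Vcal_\mm)$ are therefore compactly generated, and I let $\mathbb{X}(\mm)$ be the attached Thomason filtration. Applying Lemma~\ref{lem_tstr_techn}(iii) both to $(\Ucal, \Vcal)$ and to $(\Ucal_\mm, \Vcal_\mm)$, for $\pp \subseteq \mm$ I read off the equivalences $\pp \in X_n \Leftrightarrow E(R/\pp)[-n] \notin \Vcal$ and $\pp_\mm \in X(\mm)_n \Leftrightarrow E(R/\pp)[-n] \notin \Vcal_\mm$. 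Since $E(R/\pp) \in \Der(R_\mm)$ and $\Vcal_\mm = \Vcal \cap \Der(R_\mm)$, the two right-hand conditions coincide, so $(X_n)_\mm = X(\mm)_n$, which is precisely the gluing relation in Proposition~\ref{prop_corr_thom}.

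For the reverse direction of (i), my strategy is to construct a global Thomason filtration from $\Vcal$ alone via Lemma~\ref{lem_cog_inj}. For each $n$ I set $\Ecal_n := \{E \in \InjR \mid E[-n] \in \Vcal\}$; this is manifestly closed under products and direct summands, the latter because coaisles are closed under retracts via the orthogonality characterization. The main obstacle is verifying condition (\ref{injectivecondition}): for a direct system $(E_i)_i$ in $\Ecal_n$ with module colimit $M = \varinjlim_i E_i$, I must show $E(M)[-n] \in \Vcal$. The plan is to reduce this via Lemma~\ref{lem_def_loc} to the local statements $E(M)_\mm[-n] \in \Vcal_\mm$ for every maximal $\mm$, and then to invoke Lemma~\ref{lem_tstr_techn}(ii) at the \emph{compactly generated} local t-structure $(\Ucal_\mm, \Vcal_\mm)$ to obtain $E_{R_\mm}(M_\mm)[-n] \in \Vcal_\mm$. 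The delicate comparison between the global localization $E(M)_\mm$ and the $R_\mm$-injective envelope $E_{R_\mm}(M_\mm)$ is the crux, which I expect to bridge using the closure of $\Vcal_\mm$ under products and pure monomorphisms.

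Once (\ref{injectivecondition}) is secured, Lemma~\ref{lem_cog_inj} and Theorem~\ref{thm_torsion_pair} produce a Thomason subset $X_n$ characterized by $\pp \in X_n \Leftrightarrow E(R/\pp)[-n] \notin \Vcal$, and the cosuspension $\Vcal[-1] \subseteq \Vcal$ forces $X_n \supseteq X_{n+1}$, so $\mathbb{X} := (X_n)_{n \in \Z}$ is a Thomason filtration. Combining this characterization with the local one $\pp_\mm \in X(\mm)_n \Leftrightarrow E(R/\pp)[-n] \notin \Vcal_\mm$ from Lemma~\ref{lem_tstr_techn}(iii) gives $(X_n)_\mm = X(\mm)_n$, so $\mathbb{X}$ is the Proposition~\ref{prop_corr_thom} gluing of $\{\mathbb{X}(\mm)\}$. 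Letting $(\Ucal', \Vcal')$ be the compactly generated t-structure attached to $\mathbb{X}$ by Theorem~\ref{thmcompgen}, part (ii) applied to $(\Ucal', \Vcal')$ yields $\Vcal'_\mm = \Vcal_\mm$ for every $\mm$; since both $\Vcal$ and $\Vcal'$ are definable, Lemma~\ref{lem_def_loc} forces $\Vcal = \Vcal'$, whence $(\Ucal, \Vcal)$ is globally compactly generated.
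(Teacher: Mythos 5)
Your overall architecture coincides with the paper's: the forward direction of (i) by localizing a set of compact generators via Lemma~\ref{lem_adjunction} and Lemma~\ref{lemma_tstr_loc}, part (ii) by comparing residue-field tests for $X_n$ and $X(\mm)_n$ through $\Vcal_\mm = \Vcal\cap\Der(R_\mm)$ (the paper uses Lemma~\ref{lem_tstr_techn}(i) with $\kappa(\pp)$ directly, which is slightly cleaner than routing through injective envelopes), and the backward direction by building $\Ecal_n=\{E\in\InjR\mid E[-n]\in\Vcal\}$, extracting a Thomason filtration via Lemma~\ref{lem_cog_inj} and Theorem~\ref{thm_torsion_pair}, and closing with definability and Lemma~\ref{lem_def_loc}. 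That endgame is exactly the paper's.

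However, the step you yourself flag as ``the crux'' is a genuine gap, and the bridge you propose will not close it. To verify condition (\ref{injectivecondition}) you reduce $E(M)[-n]\in\Vcal$ via Lemma~\ref{lem_def_loc} to $E(M)_\mm[-n]\in\Vcal_\mm$ and then hope to compare the localized envelope $E(M)_\mm$ with the local envelope $E_{R_\mm}(M_\mm)$. Over a non-noetherian ring there is no control here: localization preserves neither injectivity nor essential extensions, so there is in general no map --- let alone a pure monomorphism --- from $E(M)_\mm$ into an object you already know lies in $\Vcal_\mm$, and ``closure under products and pure monomorphisms'' has nothing to act on. The correct move, which the paper makes, is never to localize $E(M)$: since $M$ embeds into $\prod_{\mm}M_\mm$ and hence into the injective $R$-module $\prod_{\mm}E_{R_\mm}(M_\mm)$, the envelope $E(M)$ is a direct summand of that product; each factor satisfies $E_{R_\mm}(M_\mm)[-n]\in\Vcal_\mm\subseteq\Vcal$ by Lemma~\ref{lem_tstr_techn}(ii) applied to the compactly generated local t-structure, so $E(M)[-n]\in\Vcal$ follows from closure under products and direct summands. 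Note also that applying Lemma~\ref{lem_tstr_techn}(ii) locally presupposes $M_\mm[-n]\in\Vcal_\mm$, which is precisely where the homotopically smashing hypothesis enters ($M[-n]=\mathrm{hocolim}_i\,E_i[-n]\in\Vcal$, then localize); your plan omits this step. Two smaller points: your characterization $\pp\in X_n\iff E(R/\pp)[-n]\notin\Vcal$ for the torsion pair produced by Lemma~\ref{lem_cog_inj} needs the observation that $\pp\notin X_n$ forces $\kappa(\pp)$ to be torsion-free (every nonzero submodule of $\kappa(\pp)$ has $\pp$ in its support), and matching it with the local test requires identifying $E_R(\kappa(\pp))$ with $E_{R_\mm}(\kappa(\pp_\mm))$; both are true but deserve justification.
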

\begin{proof}
    $(i):$ If $(\Ucal,\Vcal)$ is compactly generated then there is a set $\Scal$ of compact objects such that $\Vcal = \Scal^{\perp_0}$. By Lemma~\ref{lem_adjunction} it is easy to see that $\Vcal_\mm = \{S_\mm \mid S \in \Scal\}^{\perp_0}$. Since $S_\mm$ is a compact object of $\Der(R_\mm)$ for any $S \in \Der(R)^c$, we see that $(\Ucal_\mm,\Vcal_\mm)$ is compactly generated.

    For the converse, let $\mathbb{X}(\mathfrak{m})=(X(\mm)_n \mid n \in \Z)$ be a Thomason filtration corresponding via Theorem~\ref{thmcompgen} to the compactly generated t-structure $(\Ucal_\mm,\Vcal_\mm)$ for each $\mm \in \mSpec(R)$. Put $X_n = \bigcup_{\mm \in \mSpec(R)}(X(\mm)_n)^*$ for each $n \in \Z$. By Lemma~\ref{lem_tstr_techn}(i), $X(\mm)_n = \{\pp \in \Spec(R_\mm) \mid \kappa(\pp)[-n] \in \Ucal_\mm\}$. Since $\Ucal_\mm = \Ucal \cap \Der(R_\mm)$ by Lemma~\ref{lemma_tstr_loc}, we have $(X(\mm)_n)^* = \{\pp\in \Spec(R) \mid \pp \subseteq \mm \text{ and } \kappa(\pp)[-n] \in \Ucal\}$. Then it easily follows that the family $\{X(\mm)_n \mid \mm \in \mSpec(R)\}$ satisfies condition (\ref{gluingcondition}) for each $n \in \Z$. Next, let $\Ecal_n = \{E \in \ModR \mid E[-n] \in \Vcal \text{ and $E$ is injective}\}$ for each $n \in \Z$. Clearly, $\Ecal_n$ is closed under direct summands and products as a subcategory of $\InjR$. We claim that $\Ecal_n$ satisfies condition (\ref{injectivecondition}) for each $n \in \Z$. Indeed, let $(E_i \mid i \in I)$ be a direct system of modules from $\Ecal_n$. Denote its direct limit by $M = \varinjlim_{i \in I}E_i$. Since $E_\alpha \in \Ecal_n$, we have $E_\alpha[-n] \in \Vcal$ for each $n \in \Vcal$. Therefore, $M[-n] = \mathrm{hocolim}_{\alpha < \lambda} E_\alpha[-n]$ belongs to $\Vcal$, as $\Vcal$ is closed under directed homotopy colimits. Let $E$ be the injective envelope of $M$, we need to show that $E[-n] \in \Vcal$. For any $\mm \in \mSpec(R)$, $M_\mm[-n]$ belongs to $\Vcal_\mm$, and so the injective envelope $G_\mm$ of $M_\mm$ in $\ModRm$ also satisfies $G_\mm[-n] \in \Vcal_\mm$, here we use Lemma~\ref{lem_tstr_techn}(ii). Since $G_\mm$ is also injective as an $R$-module, we have $G_\mm \in \Ecal_n$. Finally, observe that $E$ is a direct summand of $\prod_{\mm \in \mSpec(R)}G_\mm$, and so $E \in \Ecal_n$.

    Let $(\Tcal_n, \Fcal_n)$ be the hereditary torsion pair of finite type in $\ModR$ corresponding to $\Ecal_n$ via Lemma~\ref{lem_cog_inj}, and let $X'_n$ be the Thomason set in $\Spec(R)$ corresponding to $(\Tcal_n,\Fcal_n)$ via Theorem~\ref{thm_torsion_pair}. Note that since $\Tcal_n = \{M \in \ModR \mid \Hom_R(M,\Ecal_n) = 0 \}$ and $\Supp(\kappa(\pp)) = \{\pp\}$, we have $X'_n = \{\pp \in \Spec(R) \mid \Hom_R(\kappa(\pp),\Ecal_n) = 0\}$. We claim that $X'_n = X_n$. The inclusion $X_n \subseteq X'_n$ is clear as $\Ecal_n[-n] \subseteq \Vcal$. On the other hand, if $\pp \in X'_n$ then consider any maximal ideal $\mm$ which contains $\pp$. Since $\Hom_R(\kappa(\pp),\Ecal_n) = 0$, we have in particular that $\Hom_R(\kappa(\pp),\Ecal_n(\mm)) = 0$ where $\Ecal_n(\mm)$ consists of all injective $R_\mm$-modules $E$ such that $E[-n] \in \Vcal_\mm$. But since the t-structure $(\Ucal_\mm,\Vcal_\mm)$ is compactly generated, this implies $\pp_\mm \in X_n^\mm$, and so $\pp \in (X_n^\mm)^* \subseteq X_n$ by Lemma~\ref{lem_tstr_techn}(iv).

    We have proved that $\{\mathbb{X}(\mm) \mid \mm\in \mSpec(R)\}$ is a compatible family of Thomason filtrations corresponding to a Thomason filtration $\mathbb{X} = \{X_n \mid n \in \Z\}$ via Proposition~\ref{prop_corr_thom}. Put $\Scal = \{K(I)[-n] \mid V(I) \subseteq X_n, n \in \Z\}$. It follows from Theorem~\ref{thmcompgen} that $\Scal_\mm = \{K(I)[-n] \otimes_R R_\mm \mid V(I) \subseteq X_n, n \in \Z\} = \{K(I_\mm)[-n] \mid V(I_\mm) \subseteq X(\mm)_n, n \in \Z\}$ is a set of compact generators for $(\Ucal_\mm,\Vcal_\mm)$ in $\Der(R_\mm)$, and thus satisfies $(\Scal_\mm)^{\perp_0} = \Vcal_\mm$. By Lemma~\ref{lem_adjunction}, we have $(\Scal^{\perp_0})_\mm = (\Scal_\mm)^{\perp_0} = \Vcal_\mm$. But since both $\Scal^{\perp_0}$ and $\Vcal$ are definable subcategories of $\Der(R)$, Lemma~\ref{lem_def_loc} shows that $\Scal^{\perp_0} = \Vcal$, and so $(\Ucal,\Vcal)$ is compactly generated.

    $(ii):$ This follows easily either from the proof of $(i)$ or directly from Lemma~\ref{lem_tstr_techn}(i).
\end{proof}
\subsection{Gluing of compactly generated t-structures}
\begin{definition}
    Let $(\Ucal(\mm),\Vcal(\mm))$ be a compactly generated t-structure in $\Der(R_\mm)$ for each $\mm \in \mSpec(R)$ corresponding to a Thomason filtration $\mathbb{X}(\mm)$ in $\Spec(R_\mm)$. The family $\{(\Ucal(\mm),\Vcal(\mm)) \mid \mm \in \mSpec(R)\}$ is said to be \emph{compatible} if the family $\{\mathbb{X}(\mm) \mid \mm \in \mSpec(R)\}$ of Thomason filtrations is compatible.
\end{definition}

We say that a Thomason filtration $\mathbb{X} = (X_n \mid n \in \Z)$ is \emph{non-degenerate} if $\bigcap_{n \in \Z}X_n = \emptyset$ and $\bigcup_{n \in \Z}X_n = \Spec(R)$.

\begin{theorem}\label{thm-compact-t-structure}Let $R$ be a commutative ring. There is a bijective correspondence between
\begin{enumerate}
\item[(i)] compactly generated t-structures $(\Ucal,\Vcal)$ in $\Der(R)$, and

\item[(ii)] compatible families $\{(\Ucal(\mm),\Vcal(\mm)) \mid \mm \in \mSpec(R)\}$ of compactly generated t-structures,
\end{enumerate}

 which restricts to a bijective correspondence between
\begin{enumerate}
\item[(i')] non-degenerate compactly generated t-structures $(\Ucal,\Vcal)$ in $\Der(R)$, and

\item[(ii')] compatible families $\{(\Ucal(\mm),\Vcal(\mm)) \mid \mm \in \mSpec(R)\}$ of non-degenerate compactly generated t-structures.
\end{enumerate}
\end{theorem}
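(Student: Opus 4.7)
The plan is to compose three bijections already established in the paper. Theorem~\ref{thmcompgen} identifies compactly generated t-structures in $\Der(R)$ with Thomason filtrations of $\Spec(R)$; Proposition~\ref{prop_corr_thom} identifies the latter with compatible families $\{\mathbb{X}(\mm) \mid \mm \in \mSpec(R)\}$ of local Thomason filtrations; and Theorem~\ref{thmcompgen} applied locally to each $R_\mm$ turns each $\mathbb{X}(\mm)$ into a compactly generated t-structure $(\Ucal(\mm),\Vcal(\mm))$ in $\Der(R_\mm)$. Compatibility of families of t-structures was defined precisely so as to match compatibility of their associated Thomason filtrations, hence the composition yields the correspondence between (i) and (ii). To check that this composition is canonical, i.e., that the local t-structure $(\Ucal(\mm),\Vcal(\mm))$ associated to $\mathbb{X}(\mm)$ coincides with the localization $(\Ucal_\mm,\Vcal_\mm)$ from Lemma~\ref{lemma_tstr_loc}, I would invoke Proposition~\ref{prop_cg_loc}(ii). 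Conversely, starting with a compatible family of local t-structures, gluing via Proposition~\ref{prop_corr_thom} and applying Theorem~\ref{thmcompgen} produces a compactly generated t-structure in $\Der(R)$ whose localizations by Proposition~\ref{prop_cg_loc}(ii) recover the input family.

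For the non-degenerate refinement, the plan is first to show that a compactly generated t-structure $(\Ucal,\Vcal)$ with Thomason filtration $\mathbb{X} = (X_n)$ is non-degenerate if and only if $\bigcap_n X_n = \emptyset$ and $\bigcup_n X_n = \Spec(R)$. One direction uses witnesses from Lemma~\ref{lem_tstr_techn}: a prime in $\bigcap_n X_n$ yields $\kappa(\pp) \in \bigcap_n \Ucal[n]$, while a prime outside $\bigcup_n X_n$ yields $E(\kappa(\pp)) \in \bigcap_n \Vcal[n]$ via part~(iii) together with the closure of $\Ecal_n$ under direct summands recorded in the proof of Proposition~\ref{prop_cg_loc}. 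The converse for the aisle follows from the cohomological containment $\Ucal \subseteq \Ucal_\#$ of Lemma~\ref{lem_bounded_dev}(iii), forcing each object of $\bigcap_n \Ucal[n]$ to have cohomology supported in $\bigcap_n X_n = \emptyset$. For the coaisle, Hochster quasi-compactness of $\Spec(R)$ promotes $\bigcup_n X_n = \Spec(R)$ to $X_n = \Spec(R)$ for some $n$, placing a shift of $R$ into $\Ucal$ and, together with closure of $\Ucal$ under positive shifts, killing all cohomology of any $X \in \bigcap_n \Vcal[n]$. Finally, the identity $X(\mm)_n = (X_n)_\mm$ from Proposition~\ref{prop_corr_thom} and the fact that every prime is contained in some maximal ideal reduce non-degeneracy of $\mathbb{X}$ to non-degeneracy of each $\mathbb{X}(\mm)$ by an elementary verification.

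The main obstacle is the coaisle-side non-degeneracy characterization, which requires Hochster quasi-compactness and a careful exchange of vertical shifts exploiting $\Ucal[1] \subseteq \Ucal$. Everything else is a formal assembly of previously established bijections.
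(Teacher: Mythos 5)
Your outline of the bijection between (i) and (ii) matches the paper exactly: compose Theorem~\ref{thmcompgen}, Proposition~\ref{prop_corr_thom}, and Proposition~\ref{prop_cg_loc}(ii), with the latter guaranteeing that the composed correspondence coincides with the one given by localization. Your aisle-side non-degeneracy argument (using $\Ucal\subseteq\Ucal_\#$ and supports) and the ``witness'' constructions from Lemma~\ref{lem_tstr_techn} also match the paper's proof. Where you diverge is the coaisle-side implication ``$\bigcup_n X_n = \Spec(R) \Rightarrow \bigcap_n\Vcal[n]=0$''. The paper handles this by passing to the stable compactly generated t-structure $(\Lcal,\Ccal)$ with $\Ccal = \Scal^{\perp_\Z}$ (for $\Scal = \Ucal\cap\Der(R)^c$), using the classification of stable compactly generated t-structures via constant Thomason filtrations to identify $\Ccal$ with the coaisle attached to the Thomason set $Y=\bigcup_n X_n$, and then reading off $\Ccal=0 \Leftrightarrow Y=\Spec(R)$. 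You instead invoke quasi-compactness of $\Spec(R)$ in the Hochster dual topology to promote $\bigcup_n X_n=\Spec(R)$ to $X_{n_0}=\Spec(R)$ for some $n_0$ (using that the filtration is decreasing, hence directed); then $K((0))[-n_0]\cong R[-n_0]\oplus R[1-n_0]\in\Ucal$, and since $\Ucal={}^{\perp_0}\Vcal$ is closed under direct summands, $R[-n_0]\in\Ucal$, which kills all cohomology of any $X\in\bigcap_n\Vcal[n]$. This is a correct and genuinely different route: it is more elementary in that it avoids the classification of stable t-structures and the TTF machinery, at the cost of needing the (standard but not-stated-in-the-paper) quasi-compactness of the Hochster dual and the small Koszul-complex/direct-summand computation. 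The final gluing of non-degeneracy via $X(\mm)_n = (X_n)_\mm$ that you sketch is exactly what the paper proves as Lemma~\ref{lem_corr_thom_nd}.
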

\begin{proof} The bijection between $(i)$ and $(ii)$ follows from Theorem \ref{thmcompgen}, Proposition \ref{prop_corr_thom}, and Proposition \ref{prop_cg_loc}$(ii)$. Recall that $(\Ucal,\Vcal)$ is a non-degenerate t-structure if it satisfies $\bigcap_{n \in \Z}\Ucal[n] = 0 = \bigcap_{n \in \Z}\Vcal[n]$. To prove the equivalence of $(i')$ and $(ii')$, by the bijection between $(i)$ and $(ii)$, it suffices to show that a compactly generated t-structure $(\Ucal,\Vcal)$ is non-degenerate if and only if the corresponding Thomason filtration $\mathbb{X}$ corresponding to $(\Ucal,\Vcal)$ is non-degenerate.

    Let $\Ucal_\# = \{X \in \Der(R) \mid \Supp H^n(X) \subseteq X_n, n
\in \Z\}$ be the aisle of Lemma~\ref{lem_bounded_dev}, recall that $\Ucal \subseteq \Ucal_\#$. Now if $\bigcap_{n \in \Z}X_n = \emptyset$ then $\bigcap_{n \in \Z} \Ucal_\#[n] = \{X \in \Der(R) \mid \Supp H^*(X) = \emptyset\} = 0$, which imples $\bigcap_{n \in \Z} \Ucal[n] = 0$. On the other hand, assume that $\bigcap_{n \in \Z}X_n$ contains a prime ideal $\pp$. Then $\Ucal[n]$ contains $\kappa(\pp)[0]$ for each $n \in \Z$ by Lemma~\ref{lem_tstr_techn}(i), and so $\bigcap_{n \in \Z}\Ucal[n] \neq 0$.

Now we consider the second non-degeneracy condition. Put $\Scal = \Ucal \cap \Der(R)^c$. Since the t-structure is compactly generated we have $\Scal^{\perp_0} = \Vcal$. Observe that $\bigcap_{n \in \Z}\Vcal[n] = 0$ is equivalent to $\Scal^{\perp_\Z} = (\bigcup_{n \in \Z}\Scal[n])^{\perp_0} = 0$. We let $(\Lcal,\Ccal)$ be the t-structure generated by the set $\bigcup_{n \in \Z}\Scal[n]$, so that $\Ccal = \Scal^{\perp_\Z}$. The t-structure $(\Lcal,\Ccal)$ is compactly generated and so it corresponds to a Thomason filtration $\mathbb{Y} = (Y_n \mid n \in \Z)$ via Theorem~\ref{thmcompgen}. Since the t-structure $(\Lcal,\Ccal)$ is stable (recall that this means that $\Lcal$ is closed under the cosuspension functor $[-1]$), the Thomason filtration $\mathbb{Y}$ is constant in the sense that there is a single Thomason set $Y$ such that $Y_n = Y$ for all $n \in \Z$, cf. \cite[Theorem 5.3]{hr20}. It also follows from Theorem~\ref{thmcompgen} that $Y = \bigcup_{n \in \Z}X_n$, as $\Lcal$ is generated by all shifts of the Koszul complexes of the form $K(I)$ with $V(I) \subseteq X_n$ for any $n \in \Z$. Finally, we have that $\Ccal = 0$ if and only if $\Lcal = \Der(R)$, which happens if and only if the Thomason set $Y$ is equal to $\Spec(R)$.
\end{proof}

\section{Stalk-locality of Telescope Conjecture}
\begin{definition}\label{def_tc}
    A triangulated category $\Tcal$ satisfying the assumptions of \S\ref{S:prelim} is said to
    \begin{itemize}
        \item satisfy the \emph{Telescope Conjecture (TC)} if any \emph{stable} (homotopically) smashing t-structure in $\Tcal$ is compactly generated.
        \item satisfy the \emph{Semistable Telescope Conjecture (STC)} if any homotopically smashing t-structure in $\Tcal$ is compactly generated.
    \end{itemize}
\end{definition}
\begin{remark}
    The formulation (TC) of Definition~\ref{def_tc} is equivalent to a more customary form of the Telescope Conjecture which asks for any kernel of a smashing localization of the triangulated category $\Der(R)$ to be generated by compact objects. More precisely, a stable t-structure $(\Lcal,\Ccal)$ is homotopically smashing if and only if $\Ucal$ is a \emph{smashing subcategory}, that is, if $\Ucal$ is a localizing subcategory of $\Der(R)$ such that $\Ucal^{\perp_0} = \Vcal$ is closed under coproducts, see e.g. \cite[Appendix]{hn20} for details and further references.

    Clearly, the validity of (STC) in $\Der(R)$ implies (TC).
\end{remark}

The fact that (TC) holds in the derived category of a commutative noetherian ring was established by Neeman \cite{ne1992}. The following recent development shows that also (STC) is valid in the same setting.
\begin{theorem}\label{thmHN}\emph{(\cite[Theorem 1.1]{hn20})}
    Let $R$ be a commutative noetherian ring. Then $\Der(R)$ satisfies (STC).
\end{theorem}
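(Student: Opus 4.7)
The plan is to start with an arbitrary homotopically smashing t-structure $(\Ucal,\Vcal)$ in $\Der(R)$, extract from it a candidate Thomason filtration $\mathbb{X}$ of $\Spec(R)$, form the associated compactly generated t-structure $(\Ucal',\Vcal')$ via Theorem~\ref{thmcompgen}, and show $(\Ucal,\Vcal)=(\Ucal',\Vcal')$. Since $R$ is noetherian, Thomason subsets coincide with specialization closed subsets, and the indecomposable injective $R$-modules are in bijection with $\Spec(R)$, which are the features that will allow the extraction and the matching to succeed.

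First I would define, for each $n \in \Z$,
\[
\Ecal_n=\{E\in\InjR\mid E[-n]\in\Vcal\}.
\]
Since $(\Ucal,\Vcal)$ is homotopically smashing, the coaisle $\Vcal$ is definable (by the theorem of Saor\'in--\v{S}\v{t}ov\'i\v{c}ek--Laking--Vit\'oria cited in the preliminaries), hence closed under products, direct summands and directed homotopy colimits. One then checks $\Ecal_n$ is closed under products and direct summands and that it satisfies condition~$(\dagger\dagger)$: given a direct system in $\Ecal_n$, its direct limit $M$ satisfies $M[-n]\in\Vcal$ by closure under directed homotopy colimits, and then $E(M)[-n]\in\Vcal$ by (the noetherian version of) Lemma~\ref{lem_tstr_techn}(ii), which in the noetherian case is a straightforward consequence of $M\hookrightarrow E(M)$ being an essential monomorphism split over every indecomposable injective summand. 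By Lemma~\ref{lem_cog_inj}, $\Ecal_n$ corresponds to a hereditary torsion pair of finite type, and hence via Theorem~\ref{thm_torsion_pair} to a Thomason subset $X_n\subseteq\Spec(R)$. Since $\Vcal[-1]\subseteq\Vcal$ we have $\Ecal_n\subseteq\Ecal_{n+1}$, so $X_n\supseteq X_{n+1}$, yielding a Thomason filtration $\mathbb{X}=(X_n\mid n\in\Z)$.

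Let $(\Ucal',\Vcal')$ be the compactly generated t-structure associated with $\mathbb{X}$. The next step is to identify the two t-structures. By Lemma~\ref{lem_tstr_techn}(iii), the set $X_n$ is exactly $\{\pp\in\Spec(R)\mid\kappa(\pp)[-n]\in\Ucal'\}$; on the other hand, by construction of $\Ecal_n$ and the explicit formula $X_n=\{\pp\mid\Hom_R(\kappa(\pp),\Ecal_n)=0\}$ together with Lemma~\ref{lem_tstr_techn}(iii) applied to the coaisle $\Vcal$, the same set describes when $\kappa(\pp)[-n]\in\Ucal$. Combined with Lemma~\ref{lem_bounded_dev}(iv), which identifies the bounded-below parts of the compactly generated aisle and its ``cohomological'' enlargement $\Ucal_\#$, one obtains $\Ucal'\cap\Der^+(R)=\Ucal\cap\Der^+(R)$ and $\Vcal'\cap\Der^+(R)=\Vcal\cap\Der^+(R)$.

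Finally I would bootstrap from the bounded-below case to full equality. For $\Vcal=\Vcal'$: any $X\in\Vcal'$ can be written as $X=\mathrm{holim}_{n}\tau^{\ge n}X$; since both $\Vcal$ and $\Vcal'$ are definable and agree on $\Der^+(R)$, each truncation lies in $\Vcal\cap\Vcal'$, and closure of $\Vcal$ under products plus the usual pure-triangle presenting the homotopy limit forces $X\in\Vcal$, and vice versa. This establishes $\Vcal=\Vcal'$ and hence $(\Ucal,\Vcal)=(\Ucal',\Vcal')$, so $(\Ucal,\Vcal)$ is compactly generated. The main obstacle is the passage from bounded-below to unbounded objects: one needs the noetherian hypothesis to ensure not just that definable subcategories are determined by the injective cogenerators (which holds generally) but that the cohomological description of the aisle $\Ucal_\#$ is actually correct in the sense that it controls the whole coaisle via a suitable approximation, which is precisely where the noetherian approximation of unbounded complexes by bounded-below ones through homotopy limits of K-injective resolutions plays its role.
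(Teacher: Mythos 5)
First, note that the paper does not prove this statement at all: Theorem~\ref{thmHN} is quoted verbatim from \cite[Theorem 1.1]{hn20}, so there is no internal proof to compare against. What you have written is a sketch of the main theorem of that other paper, and it has genuine gaps at exactly the points where the real difficulty lies.

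The central problem is that you repeatedly invoke Lemma~\ref{lem_tstr_techn} and Lemma~\ref{lem_bounded_dev} for the t-structure $(\Ucal,\Vcal)$, but those lemmas are stated and proved in this paper only for \emph{compactly generated} t-structures (their proofs rest on \cite[Lemma 3.2, Lemma 3.3]{hr20} and on the explicit Koszul generators), so using them for a t-structure that is merely homotopically smashing is circular. Concretely: (1) the step ``$M[-n]\in\Vcal$ implies $E(M)[-n]\in\Vcal$'' is not a formal consequence of definability. Definable subcategories are closed under pure \emph{sub}objects and products; an essential extension $M\hookrightarrow E(M)$ is not a pure monomorphism, and the claimed splitting ``over every indecomposable injective summand'' does not produce a map in the right direction. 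Establishing closure of a definable coaisle under injective envelopes of the bottom cohomology over a noetherian ring is one of the key technical results of \cite{hn20}, not a one-line remark. (2) The identification $\Ucal\cap\Der^+(R)=\Ucal'\cap\Der^+(R)$ requires knowing that membership of a bounded-below object in $\Vcal$ is detected by the supports of its cohomology relative to $\mathbb{X}$; for $\Vcal'$ this is Lemma~\ref{lem_bounded_dev}(iv), but for $\Vcal$ it is precisely what has to be proved. (3) The bootstrap to unbounded objects silently assumes that $\Vcal'$ (and $\Vcal$) is closed under the soft truncations $\tau^{\ge n}$; the triangle $\tau^{<n}X\to X\to\tau^{\ge n}X$ only gives $\Hom(U,\tau^{\ge n}X)=0$ if one already knows $\Hom(U,\tau^{<n}X[1])=0$, which is again the statement being proved. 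The Milnor-tower argument for homotopy limits is fine once truncations are known to stay in the coaisle, but that closure is another of the delicate points of \cite{hn20} (handled there via colocalization and an induction over the support, using the structure theory of injectives over noetherian rings). As it stands, your outline correctly identifies the invariant $\mathbb{X}$ to extract, but the three steps above are the substance of the theorem and remain unproved.
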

We are ready to formulate the local-global property of Proposition~\ref{prop_cg_loc} in terms of telescope properties of $\Der(R)$.
\begin{lemma}\label{lem_tc_loc}
    Let $R$ be a commutative ring and $\pp \in \Spec(R)$. Then:
    \begin{enumerate}
        \item[(i)] If (TC) holds in $\Der(R)$ then (TC) holds in $\Der(R_\pp)$.
        \item[(ii)] If (STC) holds in $\Der(R)$ then (STC) holds in $\Der(R_\pp)$.
    \end{enumerate}
\end{lemma}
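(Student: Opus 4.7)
The plan is to prove both parts simultaneously by lifting a given (stable) homotopically smashing t-structure $(\Ucal',\Vcal')$ in $\Der(R_\pp)$ to a (stable) homotopically smashing t-structure $(\Ucal,\Vcal)$ in $\Der(R)$, invoking the hypothesis for $\Der(R)$ to conclude that the lifted t-structure is compactly generated, and then transferring a set of compact generators back to $\Der(R_\pp)$ through the adjunctions of Lemma~\ref{lem_adjunction}. The natural recollement-style lift I would use is
\[
\Vcal := \Vcal', \qquad \Ucal := \{X \in \Der(R) \mid X_\pp \in \Ucal'\},
\]
where $\Vcal'$ is viewed as a subcategory of $\Der(R)$ along the full embedding $\Der(R_\pp) \hookrightarrow \Der(R)$.

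To verify that $(\Ucal,\Vcal)$ is a t-structure, the orthogonality $\Hom_{\Der(R)}(\Ucal,\Vcal)=0$ reduces by Lemma~\ref{lem_adjunction}(ii) to the orthogonality $\Hom_{\Der(R_\pp)}(\Ucal',\Vcal')=0$, and the shift condition $\Ucal[1]\subseteq\Ucal$ follows from $(X[1])_\pp\cong X_\pp[1]$. For the approximation triangle of $X\in\Der(R)$, I would compose the localization unit $X\to X_\pp$ with the $\Vcal'$-reflection $X_\pp\to V'$ in $\Der(R_\pp)$ and complete to a triangle $U\to X\to V'\to U[1]$ in $\Der(R)$. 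Applying the exact functor $(-)_\pp$ yields the canonical approximation triangle of $X_\pp$ for $(\Ucal',\Vcal')$ (since $X_\pp$ is already $\pp$-local, the localization of $X\to X_\pp\to V'$ is just $X_\pp\to V'$), whence $U_\pp\cong U'\in\Ucal'$ and therefore $U\in\Ucal$. Homotopic smashing of $(\Ucal,\Vcal)$ is inherited from $(\Ucal',\Vcal')$ because by Remark~\ref{rem_adjoints} the inclusion $\Der(R_\pp)\hookrightarrow\Der(R)$ is a left adjoint and thus preserves directed colimits, so $\Vcal=\Vcal'$ is still closed under directed homotopy colimits in $\Der(R)$. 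Stability passes through as well, since $\Vcal[1]=\Vcal'[1]\subseteq\Vcal'=\Vcal$.

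Now the hypothesis produces a set $\Scal\subseteq\Der(R)^c$ with $\Vcal=\Scal^{\perp_0}$. The localization $(-)_\pp$ preserves compactness (a direct check via Lemma~\ref{lem_adjunction}(ii) shows $\Hom_{\Der(R_\pp)}(S_\pp,-)$ commutes with coproducts for $S\in\Der(R)^c$), so $\Scal_\pp\subseteq\Der(R_\pp)^c$, and Lemma~\ref{lem_adjunction}(ii) combined with Lemma~\ref{lemma_tstr_loc} yields in $\Der(R_\pp)$
\[
\Scal_\pp^{\perp_0}=\Vcal\cap\Der(R_\pp)=\Vcal^\pp=\Vcal_\pp.
\]
Using Lemma~\ref{lemma_tstr_loc}(i) and the identity $X_\pp\cong X$ for $X\in\Der(R_\pp)$, one immediately reads off $\Ucal_\pp=\Ucal'$ and $\Vcal_\pp=\Vcal'$, so $(\Ucal',\Vcal')$ is the compactly generated t-structure in $\Der(R_\pp)$ with generating set $\Scal_\pp$. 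The main non-formal step is the construction of the lifted t-structure; the naive choice $\Vcal:=\Vcal'$ works precisely because the reflection $X\to X_\pp$ supplies the approximation triangle in $\Der(R)$, so one avoids having to check directly that $\Vcal'$ is definable in the larger derived category (which would require closure under pure monomorphisms in $\Der(R)$, a property that generically fails).
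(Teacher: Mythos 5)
Your argument is correct, and it takes a genuinely different route from the paper's proof of Lemma~\ref{lem_tc_loc}. The paper never describes the aisle of the lifted t-structure explicitly: it views $\Vcal$ as a subcategory of $\Der(R)$, observes that it is cosuspended and definable there (because $\Vcal$ is definable in $\Der(R_\pp)$ and $\Der(R_\pp)$ is definable in $\Der(R)$), and then invokes the Angeleri H\"ugel--Marks--Vit\'oria characterization to conclude that $\Vcal$ is the coaisle of a homotopically smashing t-structure in $\Der(R)$. Everything after that --- extracting a set $\Scal$ of compact generators from the hypothesis on $\Der(R)$ and transferring it to $\Der(R_\pp)$ via Lemma~\ref{lem_adjunction} --- is identical in both arguments. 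Your recollement-style construction, setting $\Ucal=\{X : X_\pp\in\Ucal'\}$ and producing the approximation triangle by completing $X\to X_\pp\to V'$, buys you an explicit description of the glued aisle and avoids the definability machinery, at the cost of a hands-on verification of the t-structure axioms and of the closure of $\Vcal$ under directed homotopy colimits.

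Two small inaccuracies, neither fatal. First, ``the inclusion is a left adjoint and thus preserves directed colimits'' is not quite the right justification for closure under directed homotopy colimits: these are not categorical colimits in $\Der(R)$. The honest argument, in line with the conventions of \S\ref{S:prelim}, is that the exact fully faithful functor $\mathbf{C}(R_\pp)\hookrightarrow\mathbf{C}(R)$ preserves direct limits, so a directed homotopy colimit in $\Der(R)$ of objects from $\Vcal'\subseteq\Der(R_\pp)$ agrees with the one computed in $\Der(R_\pp)$, which lies in $\Vcal'$. Second, your closing parenthetical is mistaken: closure of $\Vcal'$ under pure monomorphisms in $\Der(R)$ does \emph{not} generically fail --- it always holds, which is precisely what the paper's proof relies on. Indeed, $\Der(R_\pp)$ is itself a definable subcategory of $\Der(R)$, hence closed under pure monomorphisms, so any pure monomorphism $X\to Y$ with $Y\in\Vcal'$ forces $X\in\Der(R_\pp)$, and a pure monomorphism of $\Der(R)$ between objects of $\Der(R_\pp)$ is already a pure monomorphism of $\Der(R_\pp)$ (by the adjunction of Lemma~\ref{lem_adjunction} and the Thomason--Neeman localization of compacts), so $X\in\Vcal'$ by definability of $\Vcal'$ in $\Der(R_\pp)$. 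Your construction therefore works, but not for lack of alternatives.
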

\begin{proof}
    Assume that (STC) holds in $\Der(R)$ and let $\pp \in \mSpec(R)$. Let $(\Ucal,\Vcal)$ be a homotopically smashing t-structure in $\Der(R_\pp)$. Consider $\Vcal$ as a subcategory of $\Der(R)$. Since $\Vcal$ is a cosuspended definable subcategory of $\Der(R_\pp)$ (see \cite[Remark 8.9]{ss20}) and $\Der(R_\pp)$ is a cosuspended definable subcategory of $\Der(R)$, we infer that $\Vcal$ is a cosuspended definable subcategory of $\Der(R)$, and so there is a homotopically smashing t-structure $(\Ucal',\Vcal)$ in $\Der(R)$, see \cite[Proposition 4.5]{li17}. Since $\Der(R)$ satisfies (STC), there is a set of compact objects $\Scal$ of $\Der(R)$ such that $\Vcal = \Scal^{\perp_0}$ in $\Der(R)$. By Lemma~\ref{lem_adjunction}, we have the equality $\Vcal = \Scal_\pp^{\perp_0}$ in $\Der(R_\pp)$, and so $(\Ucal,\Vcal)$ is compactly generated in $\Der(R_\pp)$. The version of this implication for (TC) follows easily as the closure of $\Vcal$ under suspensions is also checked equivalently in $\Der(R_\pp)$ and $\Der(R)$.
\end{proof}
\begin{theorem}\label{thm_tc_loc}
    Let $R$ be a commutative ring.
        \begin{enumerate}
            \item[(i)] (TC) holds in $\Der(R)$ if and only if (TC) holds in $\Der(R_\mm)$ for any maximal ideal $\mm$ of $R$.
            \item[(ii)] (STC) holds in $\Der(R)$ if and only if (STC) holds in $\Der(R_\mm)$ for any maximal ideal $\mm$ of $R$.
        \end{enumerate}
\end{theorem}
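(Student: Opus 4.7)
The forward implications in both (i) and (ii) are already established by Lemma~\ref{lem_tc_loc}, so the plan is to prove the converse direction: given that (STC) (respectively (TC)) holds in each $\Der(R_\mm)$, we want to deduce it for $\Der(R)$. The key tool will be Proposition~\ref{prop_cg_loc}(i), which lets us check compact generation stalk-locally, so the whole argument reduces to setting up the local t-structures correctly.

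Concretely, for (STC), I would start with an arbitrary homotopically smashing t-structure $(\Ucal,\Vcal)$ in $\Der(R)$. By Lemma~\ref{lemma_tstr_loc}(ii) and (iii), for each $\mm \in \mSpec(R)$ the pair $(\Ucal_\mm,\Vcal_\mm)$ is a t-structure in $\Der(R_\mm)$ with $\Vcal_\mm = \Vcal \cap \Der(R_\mm)$. The first step is to verify that $(\Ucal_\mm,\Vcal_\mm)$ is again homotopically smashing: since directed homotopy colimits of objects of $\Der(R_\mm)$ computed in $\Der(R)$ already lie in $\Der(R_\mm)$ (because $\Der(R_\mm)$ is a definable subcategory of $\Der(R)$, or directly because $R_\mm$ is $R$-flat and directed colimits in $\mathbf{C}(R)$ commute with $-\otimes_R R_\mm$), and since $\Vcal$ is closed under such colimits by hypothesis, the intersection $\Vcal_\mm = \Vcal \cap \Der(R_\mm)$ is closed under directed homotopy colimits computed in $\Der(R_\mm)$.

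Now the assumption that (STC) holds in each $\Der(R_\mm)$ gives that $(\Ucal_\mm,\Vcal_\mm)$ is compactly generated in $\Der(R_\mm)$ for every $\mm \in \mSpec(R)$. By Proposition~\ref{prop_cg_loc}(i), this local compact generation implies that $(\Ucal,\Vcal)$ is itself compactly generated in $\Der(R)$, which is exactly (STC) for $\Der(R)$. For part (i), the only change is that we start with a stable homotopically smashing t-structure $(\Ucal,\Vcal)$; stability $\Vcal[1] \subseteq \Vcal$ transfers immediately to $\Vcal_\mm = \Vcal \cap \Der(R_\mm)$ since $\Der(R_\mm)$ is closed under $[1]$, so the localized t-structures are stable as well, and applying (TC) stalk-locally together with Proposition~\ref{prop_cg_loc}(i) again yields compact generation of $(\Ucal,\Vcal)$.

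The only genuine technical point, and the one I expect to warrant a brief justification, is the stability of the homotopically smashing property under localization to $R_\mm$. Everything else is bookkeeping: the TTF-triple structure of Remark~\ref{rem_adjoints} ensures that the localization/colocalization adjunctions play well with t-structures (already packaged in Lemma~\ref{lemma_tstr_loc}), and Proposition~\ref{prop_cg_loc}(i) does the heavy lifting of reassembling local compact generators into a global criterion for compact generation.
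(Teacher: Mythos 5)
Your proposal is correct and follows essentially the same route as the paper: forward implications via Lemma~\ref{lem_tc_loc}, and the converse by localizing the t-structure via Lemma~\ref{lemma_tstr_loc}, checking that the homotopically smashing (resp.\ stable) property passes to $(\Ucal_\mm,\Vcal_\mm)$, and invoking Proposition~\ref{prop_cg_loc}(i). The only difference is that you spell out why $\Vcal_\mm=\Vcal\cap\Der(R_\mm)$ remains closed under directed homotopy colimits, a point the paper dismisses as clear.
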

\begin{proof}
    By Lemma~\ref{lem_tc_loc}, we only need to prove the backward implication of both statements. If a t-structure $(\Ucal,\Vcal)$ is homotopically smashing in $\Der(R)$ then clearly so is $(\Ucal_\mm,\Vcal_\mm)$ in $\Der(R_\mm)$. Therefore, if (STC) holds for each localization $R_\mm$ at maximal ideals then (STC) holds in $\Der(R)$ by Proposition~\ref{prop_cg_loc}(i). Also, if $(\Ucal,\Vcal)$ is in addition stable then so is clearly $(\Ucal_\mm,\Vcal_\mm)$. As a consequence, we also get that if (TC) holds for each localization $R_\mm$ then it holds in $\Der(R)$.
\end{proof}
\begin{corollary}\label{cor_loc_noeth}
    Let $R$ be a commutative ring such that $R_\mm$ is noetherian for any $\mm \in \mSpec(R)$. Then (STC) holds in $\Der(R)$. In particular, (TC) holds in $\Der(R)$.
\end{corollary}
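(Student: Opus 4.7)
The proof is essentially a direct combination of two results already established in the excerpt, so the plan is short. First I would invoke Theorem~\ref{thmHN} of Hrbek--Nakamura: since each stalk $R_\mm$ is commutative noetherian by hypothesis, (STC) holds in $\Der(R_\mm)$ for every $\mm \in \mSpec(R)$. Second, I would apply the stalk-local criterion Theorem~\ref{thm_tc_loc}(ii), whose backward direction promotes the validity of (STC) for every localization at a maximal ideal to the validity of (STC) in $\Der(R)$ itself. These two steps together give (STC) in $\Der(R)$.

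For the ``in particular'' clause, I would simply observe that, by the remark immediately following Definition~\ref{def_tc}, the validity of (STC) in $\Der(R)$ implies the validity of (TC) in $\Der(R)$, since (TC) only requires the compact generation of the subclass of \emph{stable} homotopically smashing t-structures, whereas (STC) provides it for all homotopically smashing t-structures.

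I do not expect any real obstacle; the whole content lies in the earlier results. The only thing to double-check is that the hypothesis ``$R_\mm$ is noetherian for each maximal $\mm$'' is exactly what is needed to feed Theorem~\ref{thmHN} into Theorem~\ref{thm_tc_loc}, and that no further noetherian hypothesis on $R$ itself sneaks in. In particular, $R$ need not be noetherian (as the subsequent discussion of von Neumann regular rings in the introduction highlights), and the proof does not need any control over the global ring beyond the stalk-local hypothesis.
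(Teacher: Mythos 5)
Your proposal is correct and follows essentially the same route as the paper: the paper's proof combines Proposition~\ref{prop_cg_loc} with the main result of \cite{hn20}, and your use of Theorem~\ref{thm_tc_loc}(ii) is just the packaged form of that same proposition. The ``in particular'' clause is handled exactly as in the text, via the observation that (STC) implies (TC).
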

\begin{proof}
    Combine Proposition~\ref{prop_cg_loc} and the main result of \cite{hn20}.
\end{proof}
\begin{remark}\label{rem_nonnoeth}
    There are many examples of non-noetherian commutative rings which are locally noetherian in the sense of Corollary~\ref{cor_loc_noeth}, see e.g. \cite{ho71}, \cite{ab}. A well-known class of examples of such rings consists of the (commutative and not semi-simple) von Neumann regular rings --- indeed, these can be characterized as rings $R$ such that $R_\mm$ is a field for any $\mm \in \Spec(R)$. In this way, our local-global criterion recovers some recent results for von Neumann regular rings \cite[\S 7]{bs17}, \cite{st14}, \cite[Corollary 3.12]{bh20}.
\end{remark}
\subsection{Stalk-locality of $\otimes$-Telescope Conjecture for schemes}\label{ss_stalk}
Here we follow the setting of \cite[Examples 1.2(2)]{bf11}, for basic terminology about schemes we adhere to \cite{vakil}. Let $X$ be a quasi-compact and quasi-separated scheme $X$ and let $\Der(X)$ denote the derived category of complexes of $\Ocal_X$-modules with quasi-coherent cohomology. Then $\Der(X)$ together with the usual derived tensor product $\otimes^\mathbf{L}_X$ is a compactly generated tensor triangulated category. If $X$ is in addition separated then $\Der(X)$ is naturally equivalent to $\Der(\mathrm{Qcoh(X)})$, the usual derived category of the category of quasi-coherent sheaves over $X$. For non-affine schemes, one can only expect good local behavior from localizations which respect the tensor structure. The following formulation of tensor-friendly Telescope Conjecture is a special case of the general formulation for tensor triangulated categories of Balmer and Favi \cite[Definition 4.2]{bf11}, building on previous work of Hovey, Palmieri and Strickland \cite{hps}.
\begin{definition}
Let $X$ be a quasi-compact and quasi-separated scheme. We say that $\Der(X)$ satisfies the \emph{$\otimes$-Telescope Conjecture ($\otimes$TC)} if any stable (homotopically) smashing t-structure $(\Lcal,\Ccal)$ such that $\Lcal$ is a $\otimes$-ideal, meaning that $\Lcal$ satisfies in addition the condition
\begin{equation}\label{E:tensor}L \otimes_X^\mathbf{L} M \in \Lcal ~\text{ for all } L \in \Lcal, M \in \Der(X),\end{equation}
is compactly generated. It is well-known that in case $X$ is an affine scheme the tensor condition (\ref{E:tensor}) is vacuous, and so for affine schemes, ($\otimes$TC) is equivalent to (TC) by taking the ring of global sections.
\end{definition}
Balmer and Favi \cite[Corollary 6.8]{bf11} showed that ($\otimes$TC) is a local-global property with respect to any cover $X = \bigcup_{i \in I}U_i$ by open and quasi-compact sets. In particular, one can check ($\otimes$TC) locally on any cover of $X$ by open affine sets. We remark that Balmer and Favi worked in a much broader generality of compactly generated tensor triangulated categories in terms of Balmer spectra. This was further generalized by Stevenson \cite{st13} to the relative setting of a suitable action  triangulated categories. In algebrogeometric context, Antieau \cite{an14} established the local-global criterion for \'{e}tale covers of (derived) schemes. Note these results do not include the case of the cover $\Spec(R) = \bigcup_{\mm \in \mSpec(R)}\Spec(R_\mm)^*$ we consider in Theorem~\ref{thm_tc_loc}$(i)$ --- indeed, this cover is not even fpqc whenever $\mSpec(R)$ is an infinite set.

Given a scheme $X$ and $x \in X$, we denote the \emph{stalk} of $X$ at $x$ by $\Ocal_{X,x}$, recall that $\Ocal_{X,x}$ is always a local commutative ring. Combining Theorem~\ref{thm_tc_loc}$(i)$ with the Balmer and Favi result, we obtain that ($\otimes$TC) for $\Der(X)$ is a \emph{stalk-local} property in the following sense.
\begin{theorem}\label{tc_stalk_local}
    Let $X$ be a quasi-compact and quasi-separated scheme. Then the following statements are equivalent:
    \begin{enumerate}
        \item[(i)] ($\otimes$TC) holds in $\Der(X)$,
        \item[(ii)] ($\otimes$TC), or equivalently (TC), holds in $\Der(\Ocal_{X,x})$ for all $x \in X$,
        \item[(iii)] ($\otimes$TC), or equivalently (TC), holds in $\Der(\Ocal_{X,x})$ for all closed points $x \in X$.
    \end{enumerate}
\end{theorem}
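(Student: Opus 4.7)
The plan is to combine the Balmer--Favi local--global criterion for ($\otimes$TC) with Theorem~\ref{thm_tc_loc}(i) and a standard topological observation: in any quasi-compact scheme $X$, every nonempty closed subset contains a closed point of $X$ (by a Zorn's lemma argument applied to the specialization order, using quasi-compactness to ensure that decreasing chains of closed subsets have nonempty intersection). Since each stalk $\Ocal_{X,x}$ is a local commutative ring, the tensor condition (\ref{E:tensor}) is vacuous for $\Der(\Ocal_{X,x})$, so ($\otimes$TC) coincides with (TC) in the stalks, as noted already in the definition. I will establish $(i) \Leftrightarrow (ii)$ and $(ii) \Leftrightarrow (iii)$ separately.

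The implication $(ii) \Rightarrow (iii)$ is immediate. For the converse, fix $x \in X$. By the topological fact above, pick a closed point $y$ of $X$ in the closure $\overline{\{x\}}$. Choose any affine open neighborhood $U = \Spec(R)$ of $y$; since $x$ is a generalization of $y$, the point $x$ lies in $U$ as well and corresponds to primes $\pp \subseteq \mm$ of $R$, where $\mm$ corresponds to $y$. Then $\Ocal_{X,x} = R_\pp$ is naturally isomorphic to $(R_\mm)_{\pp R_\mm}$, i.e., a localization of $\Ocal_{X,y} = R_\mm$ at a prime, and Lemma~\ref{lem_tc_loc}(i) transfers (TC) from $\Ocal_{X,y}$ to $\Ocal_{X,x}$.

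For $(i) \Rightarrow (ii)$, given $x \in X$, choose an affine open $U = \Spec(R)$ containing $x$, say with $x$ corresponding to a prime $\pp$ of $R$. The Balmer--Favi result \cite[Corollary 6.8]{bf11} forces ($\otimes$TC) on $\Der(U)$, which coincides with (TC) on $\Der(R)$ in the affine case. Lemma~\ref{lem_tc_loc}(i) then yields (TC) in $\Der(R_\pp) = \Der(\Ocal_{X,x})$. For $(ii) \Rightarrow (i)$, fix an affine open cover $X = \bigcup_{i \in I}\Spec(R_i)$. Every maximal ideal $\mm$ of $R_i$ determines a point of $X$ whose stalk is $(R_i)_\mm$, and by (ii), (TC) holds there. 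Theorem~\ref{thm_tc_loc}(i) then yields (TC) for each $R_i$, so ($\otimes$TC) holds on each affine piece, and Balmer--Favi promotes this to ($\otimes$TC) for $\Der(X)$.

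The main subtlety I expect is that maximal ideals of an affine open chart $\Spec(R_i) \subseteq X$ need not correspond to closed points of $X$: a maximal ideal of $R_i$ only gives a closed point of $\Spec(R_i)$, which may fail to be closed globally. This mismatch is exactly what forces the detour through (ii). The affine-local description of ($\otimes$TC) (via Balmer--Favi combined with Theorem~\ref{thm_tc_loc}) is naturally governed by closed points of the charts $U_i$, whereas the stalk-local formulation (iii) is governed by closed points of the global scheme $X$. The topological fact about closed points existing in closures in quasi-compact schemes is the bridge that reconciles these two notions, and this is the only ingredient of the argument not already isolated earlier in the paper.
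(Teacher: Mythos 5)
Your proposal is correct and follows essentially the same route as the paper: Balmer--Favi reduces ($\otimes$TC) for $\Der(X)$ to (TC) on the affine charts, Theorem~\ref{thm_tc_loc}(i) together with Lemma~\ref{lem_tc_loc} handles the stalk-locality on each chart, and quasi-compactness supplies a closed point in the closure of any given point for $(iii)\Rightarrow(ii)$. Your explicit remark about the mismatch between closed points of a chart and closed points of $X$ is a helpful clarification but does not change the argument.
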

\begin{proof}
    Let $X = \bigcup_{i \in I} U_i$ be a cover of $X$ by open affine sets, let $\lambda_i: U_i \cong \Spec(R_i)$ be homeomorphisms where $R_i$ is a appropriate commutative ring for each $i \in I$. By \cite[Corollary 6.8]{bf11}, ($\otimes$TC) holds in $\Der(X)$ if and only if ($\otimes$TC) holds in $\Der(U_i)$ for each $i \in I$. Then ($\otimes$TC) holds in $\Der(X)$ if and only if (TC) holds in $\Der(R_i)$ for each $i \in I$. By Theorem~\ref{thm_tc_loc}$(i)$ we know that (TC) holds in $\Der(R_i)$ if and only if it holds in $\Der((R_i)_{\pp})$ for any $\pp \in \Spec(R_i)$. But $\Der((R_i)_{\pp}) \cong \Der(\Ocal_{U_i,\lambda_i(\pp)}) \cong \Der(\Ocal_{X,\lambda_i(\pp)})$. This establishes the equivalence $(i) \iff (ii)$.

    It remains to prove the implication $(iii) \implies (ii)$. Let $x \in X$ be any point. Since $X$ is quasi-compact, there is a closed point $c$ contained in the closure of $x$ in $X$. By the assumption, (TC) holds in $\Der(\Ocal_{X,c})$. Since the ring $\Ocal_{X,x}$ is isomorphic to a localization of $\Ocal_{X,c}$ at some prime ideal, Lemma~\ref{lem_tc_loc} implies that (TC) holds in $\Der(\Ocal_{X,x})$.
\end{proof}
As a consequence, ($\otimes$TC) holds if $X$ is stalk-noetherian in the following sense. As mentioned already in Remark~\ref{rem_nonnoeth}, the property of a scheme being noetherian is not a stalk-local property, even for affine schemes \cite{ho71},\cite{ab}.
\begin{corollary}\label{cor_loc_noeth_scheme}
    Let $X$ be a quasi-compact and quasi-separated scheme such that the stalk $\Ocal_{X,x}$ is noetherian for any point $x \in X$ (equivalently, for any closed point $x \in X$). Then $\Der(X)$ satisfies ($\otimes$TC).
\end{corollary}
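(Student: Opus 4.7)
The plan is to deduce the corollary directly from Theorem~\ref{tc_stalk_local} combined with the main result of \cite{hn20} (Theorem~\ref{thmHN}). The proof is a short assembly; the only ingredient requiring a separate remark is the parenthetical equivalence between ``noetherian at every point'' and ``noetherian at every closed point'', which is used to reconcile the hypothesis with part~(iii) of Theorem~\ref{tc_stalk_local}.

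First I would verify the parenthetical equivalence. Let $X$ be quasi-compact and quasi-separated and assume $\Ocal_{X,c}$ is noetherian for every closed point $c \in X$. Given any point $x \in X$, quasi-compactness of $X$ guarantees that the closure $\overline{\{x\}}$ contains a closed point $c$. Passing to any affine open neighborhood $\Spec(R) \cong U \subseteq X$ containing $c$ (and then passing to an affine open containing both $x$ and $c$, using quasi-separatedness if necessary), the point $x$ corresponds to a prime ideal contained in the maximal ideal corresponding to $c$. Hence $\Ocal_{X,x}$ is a localization of $\Ocal_{X,c}$ at a prime ideal, and a localization of a noetherian ring is noetherian.

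Next I would apply Theorem~\ref{thmHN} to each stalk: since every $\Ocal_{X,x}$ (or equivalently, each stalk at a closed point) is noetherian, (STC) holds in $\Der(\Ocal_{X,x})$, and in particular (TC) holds in $\Der(\Ocal_{X,x})$. Finally, invoking the implication $(ii) \Rightarrow (i)$ of Theorem~\ref{tc_stalk_local} (or the equivalent $(iii) \Rightarrow (i)$), we conclude that $\Der(X)$ satisfies ($\otimes$TC).

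There is no serious obstacle here; this is a formal consequence of the stalk-local criterion established in Theorem~\ref{tc_stalk_local}. The only mild subtlety, mentioned in Remark~\ref{rem_nonnoeth}, is that the class of schemes covered by this corollary is genuinely larger than the class of noetherian schemes, so the result is not vacuous; however the proof itself is a direct citation chain once the equivalence of the two formulations of the hypothesis is checked.
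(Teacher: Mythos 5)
Your proof is correct and is essentially the same as the paper's: apply the noetherian telescope result to the stalks and invoke Theorem~\ref{tc_stalk_local} (the paper cites Neeman's \cite{ne1992} for (TC) over noetherian rings rather than the stronger (STC) of \cite{hn20}, and the parenthetical equivalence is already folded into the proof of Theorem~\ref{tc_stalk_local}$(iii)\Rightarrow(ii)$). One small remark: in your verification of that equivalence, the appeal to quasi-separatedness is unnecessary --- when $c\in\overline{\{x\}}$, every open neighborhood of $c$ already contains $x$ because open sets are stable under generization, so any affine open around $c$ already does the job.
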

\begin{proof}
    This is a direct consequence of Theorem~\ref{tc_stalk_local} together with validity of (TC) in derived categories of commutative noetherian rings \cite{ne1992}.
\end{proof}
\section{Cosilting and cotilting objects}\label{S:cosilting}
The goal of this section is to refine the (co)localization results for t-structures that are induced by cosilting objects. \begin{definition}We say that an object $T \in \Der(R)$ is \emph{silting} if the pair
    $(T^{\perp_{>0}}, T^{\perp_{\leq0}})$ is a t-structure, which we call the
    \emph{silting t-structure} induced by $T$. Two silting objects
    $T, T'\in \Der(R)$ are\emph{ equivalent} if they induce the same t-structure.

    An object $C\in \Der(R)$ is \emph{cosilting} if the pair $(^{\perp_{\leq0}}C, {^{\perp_{>0}}C})$ forms a t-structure, which
    we call the \emph{cosilting t-structure} induced by $C$. Two cosilting objects $C,C'$ are \emph{equivalent} if they induce the same t-structure. By \cite[Lemma 4.5]{ps18}, $C$ and $C'$ are equivalent if and only if $\Prod(C) = \Prod(C')$, where $\Prod(C)$ is the subcategory of all direct summands of set-indexed direct products of copies of $C$.

    A silting object is called a \emph{bounded silting complex }if it is quasi-isomorphic to a bounded complex of projective $R$-modules, and a cosilting object is called a \emph{bounded cosilting complex} if it is quasi-isomorphic to a bounded complex of injective $R$-modules.
    \end{definition}
    It is an easy consequence of the definition that both silting and cosilting t-structures are always non-degenerate. If $C$ is a pure-injective cosilting object then the induced t-structure $(\Ucal,\Vcal)$ is homotopically smashing. Indeed, the pure-injectivity of $C$ ensures that the coaisle $\Vcal = {}^{\perp_{>0}}C$ is closed under both pure monomorphisms and pure epimorphisms.





    \begin{definition} We say that a silting object $T$ in $\Der(R)$ is of \emph{finite type} if there is a set of compact objects $\Scal$ such that $T^{\perp_{>0}} = \Scal^{\perp_0}$. Similarly, we call a cosilting object $C$ in $\Der(R)$ of\emph{ cofinite type} if there is a set of compact objects $\Scal$ such that ${}^{\perp_{>0}}C = \Scal^{\perp_0}$. Note that the a cosilting object is cofinite type precisely when the induced cosilting t-structure $(\Ucal,\Vcal)$ is compactly generated.
    \end{definition}

    \begin{theorem}\emph{(\cite{ma18})}\label{mv_bounded}
        Any bounded silting object is of finite type. Any bounded cosilting object is pure-injective.
    \end{theorem}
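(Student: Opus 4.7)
The plan is to prove the two statements separately, in each case exploiting the bounded complex representation together with the characterization of definable subcategories recalled in Theorem~2.1.

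For the first statement, let $T$ be a bounded silting object represented by a bounded complex $T^\bullet$ of projective $R$-modules. I would first show that the coaisle $\Vcal = T^{\perp_{>0}}$ is a definable subcategory of $\Der(R)$. The coaisle is automatically closed under products and pure monomorphisms, so by the Laking characterization it suffices to verify closure under directed homotopy colimits. Since $T^\bullet$ is a bounded complex of projectives, $\Hom_{\Der(R)}(T[i], -)$ can be computed at the homotopy level for each $i \in \Z$, and a filtration or spectral-sequence argument reduces closure of $T^{\perp_{>0}}$ under directed homotopy colimits to the module-level fact that projective modules preserve directed colimits of Hom-groups. Once $\Vcal$ is known to be definable, I would extract a set $\Scal \subseteq \Der(R)^c$ with $\Vcal = \Scal^{\perp_0}$ using the connection between definable coaisles and compactly generated t-structures (cf.\ \cite{la19}), yielding the finite type property.

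For the second statement, let $C$ be a bounded cosilting complex represented by a bounded complex of injective $R$-modules concentrated in some finite interval $[a,b]$. I would argue by induction on the length $b - a$ that $C$ is pure-injective in $\Der(R)$. The base case is a stalk complex $E[-n]$ for an injective $R$-module $E$, which is pure-injective in $\Der(R)$ by the classical pure-injectivity of $E$ in $\ModR$ together with the identification of purity for stalks in $\Der(R)$ versus $\ModR$. The inductive step uses a canonical distinguished triangle obtained from a smart truncation splitting $C$ into a shorter bounded complex of injectives and a stalk of an injective, combined with the general fact that extensions of pure-injective objects in a triangulated category remain pure-injective.

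The main technical obstacle will be the definability step in the first statement. While the bounded projective representation of $T$ makes the interaction with directed colimits seem natural, turning this into a rigorous closure statement for $T^{\perp_{>0}}$ requires care, because the vanishing $\Hom_{\Der(R)}(T, X[i]) = 0$ for $i > 0$ mixes data from all degrees of $T^\bullet$ simultaneously and the spectral sequence computing it must be controlled. This is precisely the step that uses the boundedness assumption crucially and is known to fail for general silting complexes. By contrast, the pure-injectivity argument for bounded cosilting is more mechanical once the inductive framework is set up.
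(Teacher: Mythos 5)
You should first be aware that the paper offers no proof of Theorem~\ref{mv_bounded}: it is quoted verbatim from \cite{ma18}, so the only meaningful task is to assess your argument on its own terms, and both halves have genuine gaps. In the silting half, the decisive step --- reducing closure of $T^{\perp_{>0}}$ under directed homotopy colimits to ``projective modules preserve directed colimits of Hom-groups'' --- rests on a false fact: $\Hom_R(P,-)$ commutes with direct limits only for finitely presented $P$, and the projectives occurring in a bounded silting complex are typically of infinite rank (already for the $2$-term silting complex of the tilting $\mathbb{Z}$-module $\mathbb{Q}\oplus\mathbb{Q}/\mathbb{Z}$). Closure of $T^{\perp_{>0}}$ under pure monomorphisms is likewise not ``automatic'' when $T$ is not compact. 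More fundamentally, even granting definability of the coaisle, your final inference ``definable coaisle $\Rightarrow$ compactly generated'' is unavailable: by Theorem 2.2 of this paper, definable coaisles correspond exactly to homotopically smashing t-structures, and the implication from homotopically smashing to compactly generated is precisely the Semistable Telescope Conjecture, which fails for general commutative rings (Keller's examples) and whose conditional validity is the subject of Section 4. A correct proof of finite type must extract actual compact generators from the boundedness of $T$ (as is done in \cite{ma18}); definability alone cannot suffice.

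In the cosilting half, the ``general fact that extensions of pure-injective objects in a triangulated category remain pure-injective'' is false, and the induction collapses with it. In $\Der(\mathbb{Z})$, the brutal truncation triangle
$$\mathbb{Q}/\mathbb{Z}[0] \rightarrow X \rightarrow \mathbb{Q}[1] \rightarrow \mathbb{Q}/\mathbb{Z}[1],$$
where $X$ is the two-term complex $\mathbb{Q}\rightarrow\mathbb{Q}/\mathbb{Z}$ of injective $\mathbb{Z}$-modules, exhibits $X$ as an extension of shifted injective stalks; yet $X\cong\mathbb{Z}[1]$ in $\Der(\mathbb{Z})$ and $\mathbb{Z}$ is not a pure-injective abelian group. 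Note that your induction never invokes the cosilting hypothesis, so if it worked it would show that \emph{every} bounded complex of injectives is pure-injective --- exactly what this example refutes. Any correct argument must use the cosilting conditions (for instance, the product-closure of ${}^{\perp_{>0}}C$ and the membership $C\in{}^{\perp_{>0}}C$) to verify a genuine pure-injectivity criterion for $C$, such as the factorization of the summation map $C^{(I)}\rightarrow C$ through $C^{(I)}\rightarrow C^{I}$.
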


    \begin{theorem}\emph{(\cite[A.8, Corollary 2.14]{hn20})}\label{noeth_coftype}
        If $\Der(R)$ satisfies (STC) then any pure-injective cosilting object is of cofinite type. In particular, this is the case if $R$ is a commutative noetherian ring.
    \end{theorem}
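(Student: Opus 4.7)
The plan is to chain together several facts already established in the excerpt, so the proof is essentially a definition-unwinding. Let $C$ be a pure-injective cosilting object with induced cosilting t-structure $(\Ucal,\Vcal) = ({}^{\perp_{\leq 0}}C,\, {}^{\perp_{>0}}C)$. My first step is to verify that $(\Ucal,\Vcal)$ is homotopically smashing. As recorded in the discussion right after the definition of cosilting objects, the coaisle $\Vcal$ is closed under pure monomorphisms and pure epimorphisms by pure-injectivity of $C$ (and all of its shifts $C[k]$, $k>0$), and it is automatically closed under products since it is a reflective subcategory. Thus $\Vcal$ is definable by Laking's characterization of definable subcategories of $\Tcal$; combined with the fact that $\Vcal$ is cosuspended (being a coaisle), the theorem recalled from \cite{ss20, li17} implies that $(\Ucal,\Vcal)$ is a homotopically smashing t-structure.

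Next, I would invoke the (STC) hypothesis on $\Der(R)$: every homotopically smashing t-structure is compactly generated. Applied to $(\Ucal,\Vcal)$, this yields a set $\Scal$ of compact objects of $\Der(R)$ such that $\Vcal = {}^{\perp_{>0}}C = \Scal^{\perp_0}$. This is exactly the definition of $C$ being of cofinite type, as stated just above the theorem.

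For the ``in particular'' clause, it suffices to cite Theorem~\ref{thmHN} (the main result of \cite{hn20}): if $R$ is commutative noetherian, then $\Der(R)$ satisfies (STC). Applying the first part immediately yields that every pure-injective cosilting object in $\Der(R)$ is of cofinite type.

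There is no real obstacle, as the argument is just a concatenation of previously established facts. The substantive inputs are (a) the deep result Theorem~\ref{thmHN} powering the noetherian case, and (b) the observation that pure-injectivity of $C$ forces the cosilting coaisle to be definable and hence (being cosuspended) the coaisle of a homotopically smashing t-structure; but the latter is already spelled out in the paragraph immediately preceding the theorem statement.
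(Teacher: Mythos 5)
Your proof is correct and follows exactly the route the paper itself sketches in the paragraph immediately preceding the theorem (pure-injectivity of $C$ and its shifts makes $\Vcal = {}^{\perp_{>0}}C$ closed under pure monomorphisms and pure epimorphisms; together with product-closure this makes $\Vcal$ definable and cosuspended, hence the coaisle of a homotopically smashing t-structure; (STC) then gives compact generation, i.e.\ cofinite type; and the noetherian case reduces to Theorem~\ref{thmHN}). The paper does not give its own proof of this statement but instead cites \cite[A.8, Corollary 2.14]{hn20}, so your reconstruction is precisely the intended argument, correctly filling in the step from pure-injectivity to definability via the Laking and Saor\'{i}n--\v{S}\v{t}ov\'{i}\v{c}ek characterizations already quoted in the preliminaries.
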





\begin{lemma}\label{lem4.1}
    Let $C \in \Der(R)$. Then $C$ is a cosilting object if and only if all the following conditions hold:

    (i) $C$ cogenerates $\Der(R)$, that is, ${}^{\perp_\Z}C = 0$,

    (ii) $C \in {}^{\perp_{>0}}C$,

    (iii) ${}^{\perp_{>0}}C$ is closed under products,

    (iv) ${}^{\perp_{>0}}C$ is a coaisle of a t-structure.

    Furthermore, the condition (iv) follows from the other three conditions provided that $C$ is pure-injective.
\end{lemma}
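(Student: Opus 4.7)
The plan is to prove both directions of the equivalence, with the nontrivial work concentrated in condition (ii) of the forward direction, which follows from a splitting argument applied to the canonical truncation triangle for $C$.

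For the forward implication, (iv) is immediate from the definition of cosilting, and (iii) follows because the coaisle of any t-structure is a reflective subcategory, hence closed under products. For (i), a direct shift computation gives
\[
\bigcap_{n\in\Z}\Vcal[n] \;=\; \bigcap_{n\in\Z}\Ucal[n] \;=\; {}^{\perp_{\Z}}C,
\]
so any $X\in{}^{\perp_\Z}C$ lies in $\Ucal\cap\Vcal$, whence $\Hom_{\Der(R)}(X,X)=0$ by (t1) and $X=0$. The crux is (ii): applying (t3) to $C$ yields a triangle $U\to C\xrightarrow{\beta}V\to U[1]$ with $U\in\Ucal$ and $V\in\Vcal$. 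Since $U\in{}^{\perp_{\leq 0}}C$, both $\Hom(U,C)$ and $\Hom(U[1],C)=\Hom(U,C[-1])$ vanish, so the long exact sequence from $\Hom(-,C)$ applied to this triangle shows that $\beta^{*}:\Hom(V,C)\to\Hom(C,C)$ is an isomorphism; lifting $1_C$ produces $g:V\to C$ with $g\beta=1_C$. Consequently $\beta^{*}:\Hom(V,C[i])\to\Hom(C,C[i])$ is a split epimorphism for every $i\in\Z$, and since $\Hom(V,C[i])=0$ for $i>0$ (because $V\in\Vcal$), we obtain $\Hom(C,C[i])=0$ for all $i>0$, i.e.\ $C\in{}^{\perp_{>0}}C$.

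For the backward direction, I use (iv) to fix a t-structure $(\Ucal',\Vcal)$ with $\Vcal={}^{\perp_{>0}}C$ and verify $\Ucal'={}^{\perp_{\leq 0}}C$. By (ii) and shifting, $C[i]\in\Vcal$ for every $i\leq 0$, which gives the inclusion $\Ucal'\subseteq{}^{\perp_{\leq 0}}C$. Conversely, given $X\in{}^{\perp_{\leq 0}}C$, the canonical truncation triangle $U'\to X\to V'\to U'[1]$ combined with $\Hom(-,C[i])$ for $i\leq 0$, and the vanishings $\Hom(U',C[i])=\Hom(U'[1],C[i])=0$ (since $C[i],C[i-1]\in\Vcal$), forces $V'\in{}^{\perp_{\leq 0}}C$; together with $V'\in\Vcal$ this places $V'\in{}^{\perp_\Z}C=0$ by (i), so $X\cong U'\in\Ucal'$.

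Finally, for the ``furthermore'' claim, assume $C$ is pure-injective and (i)-(iii) hold. Then ${}^{\perp_{>0}}C$ is closed under products by (iii) and under pure mono- and epimorphisms by pure-injectivity of $C$, hence definable by the characterization recalled in \S\ref{S:prelim}; a routine shift computation shows it is also cosuspended, so the characterization of coaisles of homotopically smashing t-structures in \S\ref{S:prelim} produces the desired t-structure with coaisle ${}^{\perp_{>0}}C$, verifying (iv). The main obstacle throughout is spotting the splitting step in (ii); the rest is essentially bookkeeping with long exact sequences.
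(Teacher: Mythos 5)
Your proof is correct, and the \emph{furthermore} part coincides with the paper's argument (pure-injectivity plus (iii) gives definability, hence a coaisle via the Angeleri H\"{u}gel--Marks--Vit\'{o}ria criterion). For the main equivalence the paper simply cites the dual of \cite[Proposition 4.13]{ps18}; your splitting argument for (ii) and the orthogonality bookkeeping for (i) and the converse are exactly the standard argument being referenced, so you have merely written out in full what the paper delegates to the reference.
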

\begin{proof}
Using condition (iv) the proof is dual to that of \cite[Proposition 4.13]{ps18}.

If $C$ is pure-injective, ${}^{\perp_{>0}}C$ is closed under both pure monomorphisms and pure epimorphisms. Since ${}^{\perp_{>0}}C$ is clearly cosuspended and by (iii) it is closed under products, we infer by \cite[Lemma 4.8]{li17} that $\Vcal = {}^{\perp_{>0}}C$ is a coaisle of a t-structure.
\end{proof}
We are ready to investigate colocalization properties of cosilting objects in $\Der(R)$.
\begin{lemma}\label{lem_cosilt_coloc}
    Let $C$ be a cosilting object in $\Der(R)$ and $\pp$ a prime ideal of $R$. Then $C^{\pp}$ is a cosilting object in $\Der(R_\pp)$. Furthermore, if $(\Ucal,\Vcal)$ is the t-structure induced by $C$ then $C^\pp$ induces the cosilting t-structure $(\Ucal_\pp,\Vcal^\pp)$ in $\Der(R_\pp)$.
\end{lemma}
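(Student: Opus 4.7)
The plan is to verify that $C^\pp$ satisfies the four characterizing conditions of Lemma~\ref{lem4.1} inside $\Der(R_\pp)$, and then to match the resulting cosilting t-structure with $(\Ucal_\pp,\Vcal^\pp)$. The decisive technical ingredient is the adjunction of Lemma~\ref{lem_adjunction}(i), applied to shifts of $C$: for every $X \in \Der(R_\pp)$ and $i \in \Z$,
$$\Hom_{\Der(R_\pp)}(X, C^\pp[i]) \cong \Hom_{\Der(R)}(X, C[i]).$$
Together with Lemma~\ref{lemma_tstr_loc}(i), this yields the pivotal identity
$${}^{\perp_{>0}} C^\pp = \Vcal \cap \Der(R_\pp) = \Vcal^\pp$$
(computed inside $\Der(R_\pp)$), and analogously ${}^{\perp_\Z} C^\pp = ({}^{\perp_\Z} C) \cap \Der(R_\pp) = 0$.

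With these identifications the four conditions of Lemma~\ref{lem4.1} become straightforward. Condition (i), that $C^\pp$ cogenerates $\Der(R_\pp)$, is exactly the second identity above. Condition (iv) follows because $(\Ucal_\pp,\Vcal^\pp)$ has already been shown to be a t-structure by Lemma~\ref{lemma_tstr_loc}(ii), so $\Vcal^\pp$ is a coaisle in $\Der(R_\pp)$. For condition (ii), namely $C^\pp \in \Vcal^\pp$, I will use that $C \in \Vcal$ and that $\Vcal$ is closed under colocalization, as recorded in the proof of Lemma~\ref{lemma_tstr_loc}(i) (so $C^\pp \in \Vcal \cap \Der(R_\pp) = \Vcal^\pp$). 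For condition (iii), I will observe that the inclusion $\Der(R_\pp) \hookrightarrow \Der(R)$ preserves products: by Remark~\ref{rem_adjoints} it is the right adjoint of $- \otimes_R^\mathbf{L} R_\pp$. Hence products in $\Der(R_\pp)$ of a family in $\Vcal^\pp \subseteq \Vcal$ agree with the ambient products in $\Der(R)$, which remain in $\Vcal$ (coaisles are closed under products) and in $\Der(R_\pp)$, hence in $\Vcal^\pp$.

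Once Lemma~\ref{lem4.1} applies, $C^\pp$ is cosilting in $\Der(R_\pp)$, and its induced cosilting t-structure has coaisle ${}^{\perp_{>0}} C^\pp = \Vcal^\pp$. Since a t-structure is determined by its coaisle, this cosilting t-structure must coincide with $(\Ucal_\pp,\Vcal^\pp)$, which finishes the argument. I do not anticipate a real obstacle; the proof is essentially bookkeeping with the adjunction of Lemma~\ref{lem_adjunction}(i), combined with a careful use of the TTF-triple structure of Remark~\ref{rem_adjoints} to handle the closure under products in condition (iii).
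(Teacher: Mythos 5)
Your proposal is correct and follows essentially the same route as the paper: both verify conditions (i)--(iv) of Lemma~\ref{lem4.1} for $C^\pp$ via the adjunction of Lemma~\ref{lem_adjunction}, identify ${}^{\perp_{>0}}C^\pp$ with $\Vcal \cap \Der(R_\pp) = \Vcal^\pp$, and invoke Lemma~\ref{lemma_tstr_loc} to obtain condition (iv) and the identification of the induced t-structure with $(\Ucal_\pp,\Vcal^\pp)$.
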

\begin{proof}
    By Lemma~\ref{lem_adjunction}, for any object $X \in \Der(R_\pp)$ we have $\Hom_{\Der(R_\pp)}(X,C^\pp) \cong \Hom_{\Der(R)}(X,C)$. From this we easily derive the property (i) and (iii) of Lemma~\ref{lem4.1} applied to the object $C^\pp$ of the category $\Der(R_\pp)$. Also, we can infer from \cite[Proposition 2.2]{hr20} that $C^{\pp} \in {}^{\perp_{>0}}C$, and so the adjunction formula of Lemma~\ref{lem_adjunction} also yields condition (ii). We are left to show the condition (iv), that is,  the subcategory ${}^{\perp_{>0}}C^\pp$ is a coaisle in $\Der(R_\pp)$.

    We denote $\Vcal = {}^{\perp_{>0}}C$ and $\Ucal = {}^{\perp_0}\Vcal$ so that $(\Ucal,\Vcal)$ is the t-structure induced by the cosilting object $C$ in $\Der(R)$. Recall from Lemma~\ref{lemma_tstr_loc} that $\Vcal^\pp = \Vcal \cap \Der(R_\pp)$ and note that $\Vcal^\pp$ is equal as a subcategory of $\Der(R_\pp)$ to ${}^{\perp_{>0}}C^\pp$. But $\Vcal_\pp$ is a coaisle of a t-structure by Lemma~\ref{lemma_tstr_loc}, as desired.
\end{proof}
\begin{lemma}\label{lem_cos_prod}
    Let $C$ be a pure-injective cosilting object in $\Der(R)$. Then $D = \prod_{\mm \in \mSpec(R)}C^\mm$ is a cosilting object in $\Der(R)$ which is equivalent to $C$.
\end{lemma}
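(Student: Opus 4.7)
The plan is to check that $D$ satisfies the four conditions of Lemma~\ref{lem4.1} and that its induced coaisle equals the coaisle $\Vcal = {}^{\perp_{>0}}C$ of the original cosilting t-structure $(\Ucal,\Vcal)$. The main computational tool is the adjunction Lemma~\ref{lem_adjunction}$(iii)$, which yields for every $X \in \Der(R)$ and every $\mm \in \mSpec(R)$ and $i \in \Z$ the identification
\begin{equation*}
\Hom_{\Der(R)}(X, C^\mm[i]) \cong \Hom_{\Der(R)}(X_\mm, C[i]).
\end{equation*}
Taking the product over $\mm$ and using that $D = \prod_\mm C^\mm$, I obtain
\begin{equation*}
X \in {}^{\perp_{>0}}D \iff X_\mm \in \Vcal ~\forall\mm \in \mSpec(R).
\end{equation*}

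The key step is to conclude ${}^{\perp_{>0}}D = \Vcal$. Since $C$ is pure-injective cosilting, the t-structure $(\Ucal,\Vcal)$ is homotopically smashing, hence $\Vcal$ is definable. Lemma~\ref{lem_def_loc} then asserts that $X \in \Vcal$ is equivalent to $X_\mm \in \Vcal$ for all maximal $\mm$, which matches the right-hand side above exactly. Hence ${}^{\perp_{>0}}D = \Vcal$ as subcategories of $\Der(R)$.

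With this identification in hand, three of the four conditions of Lemma~\ref{lem4.1} are essentially free: condition (iii) (closure under products) and condition (iv) (being a coaisle) are inherited from $\Vcal$, and condition (ii) ($D \in {}^{\perp_{>0}}D = \Vcal$) follows because each factor $C^\mm$ lies in $\Vcal$ — either by Lemma~\ref{lem_cosilt_coloc} combined with the adjunction (applied to the cosilting self-ext vanishing of $C^\mm$ in $\Der(R_\mm)$), or directly from \cite[Proposition 2.2]{hr20} — and $\Vcal$ is closed under products. Condition (i), the cogeneration property ${}^{\perp_\Z}D = 0$, is handled by the same adjunction: $X \in {}^{\perp_\Z}D$ forces $X_\mm \in {}^{\perp_\Z}C = 0$ for every $\mm$, and the natural map $X \to \prod_\mm X_\mm$ is a pure monomorphism (as in the proof of Lemma~\ref{lem_def_loc}), so $X$ receives a pure monomorphism to zero, forcing $\Hom_{\Der(R)}(S,X) = 0$ for every compact $S$ and hence $X = 0$ by compact generation of $\Der(R)$.

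Once Lemma~\ref{lem4.1} applies, $D$ is cosilting in $\Der(R)$, and the coincidence ${}^{\perp_{>0}}D = {}^{\perp_{>0}}C$ shows that $D$ induces the same cosilting t-structure as $C$, so $D$ is equivalent to $C$ by definition. I do not expect any serious obstacle; the only subtle point is the use of definability of $\Vcal$ to upgrade the pointwise-local condition $X_\mm \in \Vcal$ for all $\mm$ to the global condition $X \in \Vcal$, which crucially requires the pure-injectivity hypothesis on $C$.
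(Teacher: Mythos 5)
Your proof is correct and follows essentially the same route as the paper: the key identification ${}^{\perp_{>0}}D=\Vcal$ via the adjunction together with Lemma~\ref{lem_def_loc} (using definability of $\Vcal$ from pure-injectivity), then verification of the conditions of Lemma~\ref{lem4.1}. The only cosmetic difference is that the paper obtains condition (iv) by noting $D$ is pure-injective and invoking the final clause of Lemma~\ref{lem4.1}, whereas you inherit it directly from the equality ${}^{\perp_{>0}}D=\Vcal$; both are fine.
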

\begin{proof}
    We will again use Lemma~\ref{lem4.1} to show that $D$ is a cosilting object. Set $\Vcal = {}^{\perp_{>0}}C$. The main observation we make is that for any $X \in \Der(R)$ we have $X \in {}^{\perp_{>0}}D$ if and only if $X_\mm \in \Vcal$ for each maximal ideal $\mm$. This already shows that $D$ is a cogenerator in $\Der(R)$. Because $\Vcal$ is definable, Lemma~\ref{lem_def_loc} yields that $X \in \Vcal$ whenever $X_\mm \in \Vcal$ for all $\mm \in \mSpec(R)$, and so $\Vcal = {}^{\perp_{>0}}D$. Because $C$ is pure-injective, so is $C^\mm$ for each maximal ideal $\mm$, and therefore $D$ is pure-injective. Finally, $C^\mm \in \Vcal$ for each maximal ideal $\mm$ by \cite[Proposition 2.2]{hr20}, and thus $D \in \Vcal$. Now we can apply Lemma~\ref{lem4.1} to infer that $D$ is a cosilting object, and since $\Vcal = {}^{\perp_{>0}}D$, we have that $D$ is equivalent to $C$.
\end{proof}
\begin{definition}
A cosilting object $C$ is \emph{cotilting} if $\mathrm{Prod}(C) \subseteq {}^{\perp_{<0}}C$.
\end{definition}
\begin{remark}
The importance of the last definition comes from derived equivalences, here we follow \cite{ps18}. Let $(\Ucal,\Vcal)$ be the t-structure induced by $C$ and let $\Hcal = \Vcal \cap \Ucal[-1]$ be the \emph{heart} of the t-structure. Assume that $C$ is a bounded cosilting complex. Then $\Hcal$ is a Grothendieck category \cite{li17}, and there is a \emph{realization functor} $\mathrm{real}_C: \Der^b(\Hcal) \rightarrow \Der^b(R)$ between the bounded derived categories \cite{ps18}. Then the cosilting object $C$ is cotilting if and only if $\mathrm{real}_C$ is an equivalence \cite[Corollary 5.2]{ps18}.
\end{remark}
\begin{lemma}\label{lem4.4}
    If $C$ is a cotilting object in $\Der(R)$ then $C^\mm$ is a cotilting object for any $\mm \in \mSpec(R)$. Furthermore, if $C$ is pure-injective then $\prod_{\mm \in \mSpec(R)}C^\mm$ is a cotilting object in $\Der(R)$ equivalent to $C$.
\end{lemma}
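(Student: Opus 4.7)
The plan is to handle the two claims in sequence, both via Lemma~\ref{lem_cos_prod} which transfers the $\Prod$-class of $C$ to that of the global product $\prod_\mm C^\mm$.

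For the first claim, from Lemma~\ref{lem_cosilt_coloc} the object $C^\mm$ is already cosilting in $\Der(R_\mm)$, so only the cotilting condition $\Prod_{\Der(R_\mm)}(C^\mm) \subseteq {}^{\perp_{<0}} C^\mm$ in $\Der(R_\mm)$ remains to be verified. Since the inclusion $\Der(R_\mm) \hookrightarrow \Der(R)$ admits the left adjoint $-\otimes_R^{\mathbf{L}} R_\mm$, it preserves products, so the products in $\Der(R_\mm)$ and $\Der(R)$ of copies of $C^\mm$ agree, giving $\Prod_{\Der(R_\mm)}(C^\mm) \subseteq \Prod_{\Der(R)}(C^\mm)$. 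Combined with the adjunction isomorphism $\Hom_{\Der(R_\mm)}(X, C^\mm[i]) \cong \Hom_{\Der(R)}(X, C[i])$ from Lemma~\ref{lem_adjunction}(i), the containment to verify reduces to $\Prod_{\Der(R)}(C^\mm) \subseteq {}^{\perp_{<0}} C$ in $\Der(R)$. By the cotilting hypothesis $\Prod(C) \subseteq {}^{\perp_{<0}} C$, it is thus enough to prove $C^\mm \in \Prod_{\Der(R)}(C)$. This is accomplished by Lemma~\ref{lem_cos_prod}: granted that $C$ is pure-injective (either imposed here, or deduced from the standard fact that cosilting objects in $\Der(R)$ are pure-injective), the object $D = \prod_{\mathfrak{n} \in \mSpec(R)} C^{\mathfrak{n}}$ is cosilting equivalent to $C$, so $\Prod(C) = \Prod(D)$, and since $C^\mm$ is a direct summand of $D$, we obtain $C^\mm \in \Prod(D) = \Prod(C)$, as needed.

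For the \emph{furthermore} statement, with $C$ pure-injective Lemma~\ref{lem_cos_prod} directly supplies that $D = \prod_\mm C^\mm$ is a cosilting object equivalent to $C$. Hence $\Prod(C) = \Prod(D)$, and in particular $C$ and $D$ are mutual summands of products of one another, which forces the orthogonal classes ${}^{\perp_{<0}} C$ and ${}^{\perp_{<0}} D$ to coincide. Combining, $\Prod(D) = \Prod(C) \subseteq {}^{\perp_{<0}} C = {}^{\perp_{<0}} D$ by the cotilting property of $C$, establishing that $D$ is cotilting.

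The principal obstacle I foresee is the implicit reliance on pure-injectivity of $C$ in the first claim: Lemma~\ref{lem_cos_prod}, and hence the direct route to $C^\mm \in \Prod(C)$, is not available without it, and a proof of this containment in its absence seems to require substantially more delicate analysis of the colocalization functor on the heart of the cosilting t-structure.
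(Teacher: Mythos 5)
Your proof follows essentially the same route as the paper's: both reduce the first claim to showing $C^\mm \in \Prod(C)$ via Lemma~\ref{lem_cos_prod} and then conclude by the adjunction of Lemma~\ref{lem_adjunction}, and both handle the ``furthermore'' clause by observing that equivalence of cosilting objects (i.e.\ $\Prod(C) = \Prod(D)$) preserves the cotilting property. Your extra care in checking that products and summands in $\Der(R_\mm)$ agree with those computed in $\Der(R)$ is exactly what the paper glosses over as ``the usual adjunction argument.''

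The concern you raise in your final paragraph is legitimate and is in fact shared by the paper's own proof: the first claim of Lemma~\ref{lem4.4} carries no explicit pure-injectivity hypothesis, yet the paper opens its proof by invoking Lemma~\ref{lem_cos_prod}, which is stated only for pure-injective cosilting objects. So the gap you flagged is not a defect of your proposal relative to the paper --- both arguments silently assume pure-injectivity in the first claim. In the downstream applications of this lemma (e.g.\ $n$-cotilting modules via Theorem~\ref{mv_bounded}) the objects in question are pure-injective, so the assumption is harmless in practice, but the hypothesis ought to appear in the statement of the first claim for the proof to be complete as written.
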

\begin{proof}
    By Lemma~\ref{lem_cos_prod}, the cosilting object $C$ is equivalent to $D = \prod_{\mm \in \mSpec(R)}C^\mm$, which means that $\Prod(C) = \Prod(D)$. Therefore, $\Prod(C^\mm) \subseteq {}^{\perp_{<0}}C$, which by the usual adjunction argument translates to $\Prod(C^\mm) \subseteq {}^{\perp_{<0}}C^\mm$ in $\Der(D_\mm)$.

    If $C$ is pure-injective then $\prod_{\mm \in \mSpec(R)}C^\mm$ is cosilting in $\Der(R)$ by Lemma~\ref{lem_cos_prod}. Then it remains to note that the equivalence between cosilting objects clearly preserves the cotilting property. Indeed, if $C'$ and $C''$ are two equivalent cosilting objects then $\Prod(C) = \Prod(C')$, and therefore $\Prod(C) \subseteq {}^{\perp_{<0}}C$ if and only if $\Prod(C') \subseteq {}^{\perp_{<0}}C'$.
\end{proof}
However, it is not true in general that if $C^\mm$ is a cotilting object for any $\mm \in \mSpec(R)$ then the cosilting object $C$ has to be cotilting as demonstrated in the following example.
\begin{example}
    Let $k$ be a field and $R = k^\omega$ be the countably infinite product of $k$. Recall that $R$ is a von Neumann regular ring, and therefore in particular every simple $R$-module is injective. Let $\mm$ be any maximal ideal of $R$ such that $\Hom_R(R/\mm,R) = 0$. Such maximal ideals are plentiful --- recall that maximal ideals of $R$ are in a natural bijection with ultrafilters on $\omega$ and the desired property of $\mm$ is satisfied if and only if the corresponding ultrafilter is not principal. We also set $\Mcal = \mSpec(R) \setminus \{\mm\}$.

    Put $C = R/\mm [0] \oplus \prod_{\mathfrak{n} \in \Mcal}R/\mathfrak{n}[-1]$ and we claim that $C$ is a cosilting object in $\Der(R)$. Since $C$ is a product of two shifted stalk complexes of injective $R$-modules, we infer that $C$ is pure-injective object of $\Der(R)$. By the injectivity, the orthogonal $\Vcal = {}^{\perp_{>0}}C$ is determined on cohomology and can be easily computed: $\Vcal = \{X \in \Der^{\geq 0} \mid H^0(X) \in \Fcal\}$, where $\Fcal = \{M \in \ModR \mid \Hom_R(M,\prod_{\mathfrak{n} \in \Mcal}R/\mathfrak{n})=0\}$. It is easy to see that $\Fcal = \mathrm{Add}(R/\mm) = \mathrm{Prod}(R/\mm)$. It follows that $C \in \Vcal$ and that $\Vcal$ is product-closed. By Lemma~\ref{lem4.1}, $C$ is a cosilting object.

    Next we show that $C$ is not a cotilting object. From $\Hom_R(R/\mm,R) = 0$ it follows that the intersection of all maximal ideals belonging to $\Mcal$ is zero. Therefore, $\prod_{\mathfrak{n} \in \Mcal}R/\mathfrak{n}$ contains a copy of $R$ as a submodule. Since $R/\mm$ is injective, this yields $\Hom_R(\prod_{\mathfrak{n} \in \Mcal}R/\mathfrak{n},R/\mm) \neq 0$, and therefore there is a non-zero map $C \rightarrow C[-1]$ in $\Der(R)$, witnessing that $C$ is not cotilting.

    Finally, let $\mathfrak{n}$ be a maximal ideal. Then the colocalization $C^\mathfrak{n}$ is equal either to $R/\mm[0]$ in case $\mathfrak{n} = \mm$ or to $R/\mathfrak{n}[-1]$ in case $\mathfrak{n} \neq \mm$. In either case, $C^\mathfrak{n}$ is a shift of the injective cogenerator of the category of vector spaces over the field $R/\mathfrak{n} = R_\mathfrak{n}$, and so $C^\mathfrak{n}$ is a cotilting object in $\Der(R_\mathfrak{n})$.
\end{example}
\subsection{Cofinite type and compatible families of Thomason filtrations}
    We start by generalizing \cite[Theorem 3.8]{lihrbek} and characterize the Thomason filtrations which are induced by a cosilting object. 
\begin{proposition}\label{prop_nondegen_cosilting}
    Let $R$ be a commutative ring. There is a bijective correspondence between the following families:

    (i) equivalence classes of cosilting objects of cofinite type in $\Der(R)$, and

    (ii) non-degenerate Thomason filtrations $\mathbb{X} = (X_n \mid n \in \Z)$ on $\Spec(R)$.

    The correspondence assigns to a cosilting object $C$ of cofinite type the Thomason filtration associated to the compactly generated t-structure induced by $C$ via Theorem~\ref{thmcompgen}.
\end{proposition}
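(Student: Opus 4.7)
The plan is to assemble the bijection by stringing together three correspondences that have already been set up in the excerpt, plus the Laking characterization recalled in the introduction.

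First I would unravel the definitions. By definition, a cosilting object $C$ is of cofinite type precisely when the cosilting t-structure $(^{\perp_{\leq 0}}C, {}^{\perp_{>0}}C)$ is compactly generated, and two cosilting objects are equivalent exactly when they induce the same t-structure. Hence the assignment $C \mapsto (^{\perp_{\leq 0}}C, {}^{\perp_{>0}}C)$ descends to an injection from equivalence classes of cosilting objects of cofinite type into the collection of compactly generated t-structures, whose image consists of those compactly generated t-structures that arise from some cosilting object.

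Second, I would appeal to the result of Laking (\cite[Theorem 4.6]{la19}), quoted in the introduction, which states that a compactly generated t-structure is induced by a cosilting object if and only if it is non-degenerate. Combined with the previous paragraph, this identifies equivalence classes of cosilting objects of cofinite type with \emph{non-degenerate} compactly generated t-structures in $\Der(R)$.

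Third, I would invoke Theorem~\ref{thmcompgen} to turn non-degenerate compactly generated t-structures into Thomason filtrations. The only remaining task is to match the non-degeneracy condition on the t-structure with the condition $\bigcap_n X_n = \emptyset$ and $\bigcup_n X_n = \Spec(R)$ on the associated filtration $\mathbb{X} = (X_n \mid n \in \Z)$; but this is exactly what the non-degeneracy argument in the proof of Theorem~\ref{thm-compact-t-structure} establishes (using Lemma~\ref{lem_bounded_dev} and Lemma~\ref{lem_tstr_techn}(i) for one direction, and the stable auxiliary t-structure generated by $\bigcup_n \Scal[n]$ together with Theorem~\ref{thmcompgen} for the other). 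Concatenating the three bijections yields the claimed correspondence, and the description of the assignment in the statement is the composition.

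There is essentially no obstacle to overcome: once the definition of cofinite type is combined with the Laking characterization of cosilting-induced compactly generated t-structures, the statement reduces to the non-degenerate restriction of Theorem~\ref{thmcompgen}, which was already proved inside Theorem~\ref{thm-compact-t-structure}. So the proof is a short synthesis, and the only thing to verify carefully is that the non-degeneracy translation on both sides of the Thomason correspondence was indeed handled in full generality (not only in the presence of a compatible local family), which a quick re-reading of that earlier argument confirms.
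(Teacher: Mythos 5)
Your proposal is correct and follows essentially the same route as the paper's own proof: the paper likewise combines the definition of cofinite type, Laking's characterization of cosilting-induced compactly generated t-structures as the non-degenerate ones, and the non-degeneracy translation between t-structures and Thomason filtrations already established inside the proof of Theorem~\ref{thm-compact-t-structure}. Nothing is missing.
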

\begin{proof}
    Let $(\Ucal,\Vcal)$ be a compactly generated t-structure in $\Der(R)$ corresponding to a Thomason filtration $\mathbb{X}$. By \cite[Theorem 4.6]{la19}, the t-structure $(\Ucal,\Vcal)$ is induced by a cosilting object if and only if it is non-degenerate. So the result holds by the proof Theorem \ref{thm-compact-t-structure}.
\end{proof}
\begin{lemma}\label{lem_corr_thom_nd}
    The bijection of Proposition~\ref{prop_corr_thom} restricts to a bijection
    $$\left \{ \begin{tabular}{ccc} \text{ Non-degenerate Thomason }  \\ \text{ filtrations $\mathbb{X}$ of $\Spec(R)$ } \end{tabular}\right \}  \xleftrightarrow{1-1}  \left \{ \begin{tabular}{ccc} \text{ Compatible families $\{\mathbb{X}(\mm) \mid \mm \in \mSpec(R)\}$ } \\ \text{  of non-degenerate Thomason filtrations }\end{tabular}\right \}.$$
\end{lemma}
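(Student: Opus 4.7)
The plan is to observe that since non-degeneracy of a Thomason filtration $\mathbb{X}=(X_n\mid n\in\Z)$ is simply the pair of pointwise conditions $\bigcap_{n\in\Z}X_n=\emptyset$ and $\bigcup_{n\in\Z}X_n=\Spec(R)$ (with the analogous statements for each $\mathbb{X}(\mm)$), the restriction claim reduces to checking that these two conditions are preserved in both directions by the bijection of Proposition~\ref{prop_corr_thom}. The main tool is the standard bijection between primes of $R_\mm$ and primes of $R$ contained in $\mm$, under which $X(\mm)^*=X\cap\Spec(R_\mm)^*$ for any Thomason set $X\subseteq\Spec(R)$.

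For the forward direction, assume $\mathbb{X}$ is non-degenerate and fix $\mm\in\mSpec(R)$. If $\pp_\mm\in\bigcap_{n\in\Z}X(\mm)_n$, then for each $n$ we can lift to some $\pp^{(n)}\in X_n$ with $\pp^{(n)}\subseteq\mm$ and $\pp^{(n)}_\mm=\pp_\mm$; by the prime bijection all $\pp^{(n)}$ equal $\pp$, so $\pp\in\bigcap_n X_n=\emptyset$, a contradiction. Conversely, any $\pp_\mm\in\Spec(R_\mm)$ corresponds to $\pp\subseteq\mm$ in $\Spec(R)$, and by $\bigcup_n X_n=\Spec(R)$ there is $n$ with $\pp\in X_n$, hence $\pp_\mm\in X(\mm)_n$.

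For the backward direction, assume every $\mathbb{X}(\mm)$ is non-degenerate. If $\pp\in\bigcap_n X_n$, pick any $\mm\in\mSpec(R)$ with $\pp\subseteq\mm$; then $\pp_\mm\in X(\mm)_n$ for every $n$, contradicting $\bigcap_n X(\mm)_n=\emptyset$. Given arbitrary $\pp\in\Spec(R)$, choose a maximal ideal $\mm\supseteq\pp$; since $\bigcup_n X(\mm)_n=\Spec(R_\mm)$, there is $n$ with $\pp_\mm\in X(\mm)_n$, and by the characterization of $X(\mm)_n$ as $(X_n)_\mm$ together with the prime bijection, $\pp\in X_n$, so $\bigcup_n X_n=\Spec(R)$.

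There is no serious obstacle here; the argument is really a bookkeeping exercise using that $\Spec(R)=\bigcup_{\mm\in\mSpec(R)}\Spec(R_\mm)^*$ and that primes of $R_\mm$ biject with primes of $R$ contained in $\mm$. The only small care needed is to distinguish $X(\mm)\subseteq\Spec(R_\mm)$ from its image $X(\mm)^*\subseteq\Spec(R)$ when translating between the local and global statements.
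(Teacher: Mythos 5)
Your proposal is correct and follows essentially the same route as the paper: both rest on the identity $X(\mm)_n^* = X_n \cap \Spec(R_\mm)^*$ (equivalently $X_n = \bigcup_{\mm}X(\mm)_n^*$), which comes from condition $(\dagger)$, and then check that the two non-degeneracy conditions pass back and forth pointwise via the prime bijection. The paper's proof simply states "this already implies" where you spell out the bookkeeping, but the underlying argument is identical.
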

\begin{proof}
    The condition (\ref{gluingcondition}) (see \S\ref{ss_comp_thom}) ensures that $X_n = \bigcup_{\mm \in \mSpec(R)}X(\mm)_n^*$ and $X(\mm)_n^* = X_n \cap \Spec(R_\mm)^*$ for each $n \in \Z$. This already implies that $\bigcup_{n \in \Z}X_n = \Spec(R)$ if and only if $\bigcup_{n \in \Z}X(\mm)_n = \Spec(R_\mm)$ for each $\mm \in \mSpec(R)$ as well as that $\bigcap_{n \in \Z} X_n = \emptyset$ if and only if $\bigcap_{n \in \Z}X(\mm)_n = \emptyset$ for each $\mm \in \mSpec(R)$.
\end{proof}
\begin{definition}
    Let $C(\mm)$ be a cosilting object of cofinite type in $\Der(R_\mm)$ for each $\mm \in \mSpec(R)$ corresponding to a non-degenerate Thomason filtration $\mathbb{X}(\mm)$ in $\Spec(R_\mm)$. The family $\{C(\mm) \mid \mm \in \mSpec(R)\}$ is said to be \emph{compatible} if the family $\{\mathbb{X}(\mm) \mid \mm \in \mSpec(R)\}$ is compatible. We say that two compatible families $\{C(\mm) \mid \mm \in \mSpec(R)\}$ and $\{D(\mm) \mid \mm \in \mSpec(R)\}$ of cosilting objects of cofinite type are \emph{equivalent} if the cosilting objects $C(\mm)$ and $D(\mm)$ are equivalent for each $\mm \in \mSpec(R)$.
\end{definition}
\begin{theorem}\label{thm4.17}
    There is a bijection
    $$\left \{ \begin{tabular}{ccc} \text{ Cosilting objects $C$ }  \\ \text{ in $\Der(R)$ of cofinite type } \\ \text{up to equivalence} \end{tabular}\right \}  \xleftrightarrow{1-1}  \left \{ \begin{tabular}{ccc} \text{ Compatible families $\{C(\mm) \mid \mm \in \mSpec(R)\}$ } \\ \text{ of cosilting objects of cofinite type } \\ \text{up to equivalence} \end{tabular}\right \}$$
    induced by the assignment
    $$C \mapsto \{C^\mm \mid \mm \in \mSpec(R)\}$$
    and
    $$\{C(\mm) \mid \mm \in \mSpec(R)\} \mapsto \prod_{\mm \in \mSpec(R)}C(\mm).$$
\end{theorem}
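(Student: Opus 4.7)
The plan is to compose the previously established bijections — Proposition~\ref{prop_nondegen_cosilting} (cosilting objects of cofinite type up to equivalence $\leftrightarrow$ non-degenerate compactly generated t-structures $\leftrightarrow$ non-degenerate Thomason filtrations), Lemma~\ref{lem_corr_thom_nd} (non-degenerate Thomason filtrations on $\Spec(R)$ $\leftrightarrow$ compatible families of such over $\mSpec(R)$), and Proposition~\ref{prop_cg_loc} (compatible families of compactly generated t-structures $\leftrightarrow$ global compactly generated t-structures) — and then to verify that the composed bijection is implemented by the explicit assignments $C \mapsto \{C^\mm\}$ and $\{C(\mm)\} \mapsto \prod_\mm C(\mm)$ stated in the theorem.

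For the forward direction, given $C$ cosilting of cofinite type with induced compactly generated non-degenerate t-structure $(\Ucal,\Vcal)$, Lemma~\ref{lem_cosilt_coloc} yields that $C^\mm$ is cosilting in $\Der(R_\mm)$ inducing the localized t-structure $(\Ucal_\mm,\Vcal^\mm)$; by Proposition~\ref{prop_cg_loc}, this t-structure is compactly generated and corresponds to the local factor of the Thomason filtration of $(\Ucal,\Vcal)$ under the gluing of Lemma~\ref{lem_corr_thom_nd}. Hence $\{C^\mm \mid \mm \in \mSpec(R)\}$ is a compatible family of cosilting objects of cofinite type matching $C$ under the composed bijection.

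For the backward direction, start with a compatible family $\{C(\mm)\}$ whose members induce compactly generated non-degenerate t-structures $(\Ucal(\mm),\Vcal(\mm))$, and let $(\Ucal,\Vcal)$ be the non-degenerate compactly generated t-structure on $\Der(R)$ obtained from the corresponding compatible family of Thomason filtrations via Theorem~\ref{thm-compact-t-structure}, so that $\Vcal_\mm = \Vcal(\mm)$ for every $\mm$. Setting $D = \prod_\mm C(\mm)$, I would verify the four conditions of Lemma~\ref{lem4.1} to conclude that $D$ is cosilting with induced t-structure $(\Ucal,\Vcal)$, hence of cofinite type. Conditions (ii) and (iii) are immediate: $D \in \Vcal$ since each $C(\mm) \in \Vcal(\mm) = \Vcal_\mm \subseteq \Vcal$ and $\Vcal$ is product-closed, and $\Vcal$ itself is product-closed as the coaisle of a compactly generated t-structure. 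For (i), the adjunction of Lemma~\ref{lem_adjunction} reduces $X \in {}^{\perp_\Z} D$ to $X_\mm \in {}^{\perp_\Z} C(\mm) = 0$ for every $\mm$, forcing $X=0$ by the local-global principle for vanishing of cohomology modules. Condition (iv), the identity ${}^{\perp_{>0}}D = \Vcal$, is the main technical step: the same adjunction translates $X \in {}^{\perp_{>0}}D$ into the local requirement $X_\mm \in \Vcal(\mm) = \Vcal_\mm$ for every $\mm$, which by Lemma~\ref{lem_def_loc} applied to the definable subcategory $\Vcal$ is equivalent to $X \in \Vcal$.

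Mutual inversion up to equivalence then follows directly. Starting from $C$ and forming $\prod_\mm C^\mm$, the backward direction applied to the compatible family $\{C^\mm\}$ shows that $\prod_\mm C^\mm$ is a cosilting object inducing the same t-structure $(\Ucal,\Vcal)$ as $C$, whence $C \sim \prod_\mm C^\mm$; one can alternatively invoke Lemma~\ref{lem_cos_prod} after fixing a pure-injective representative of the equivalence class. Symmetrically, starting with a compatible family $\{C(\mm)\}$ and setting $D = \prod_\mm C(\mm)$, Lemma~\ref{lem_cosilt_coloc} gives that $D^{\mm'}$ is cosilting in $\Der(R_{\mm'})$ inducing $(\Ucal_{\mm'},\Vcal^{\mm'}) = (\Ucal(\mm'),\Vcal(\mm'))$, so $D^{\mm'}$ is equivalent to $C(\mm')$ by Proposition~\ref{prop_nondegen_cosilting}. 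The principal obstacle is the verification of condition (iv) above, which ties the local cosilting data to the global t-structure and relies on the simultaneous use of the adjunction formula of Lemma~\ref{lem_adjunction}, the identification $\Vcal_\mm = \Vcal(\mm)$ from Proposition~\ref{prop_cg_loc}, and the local-global characterization of definable subcategories from Lemma~\ref{lem_def_loc}.
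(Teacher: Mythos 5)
Your proposal is correct and takes essentially the same approach as the paper: compose the Thomason-filtration bijections (Proposition~\ref{prop_nondegen_cosilting}, Lemma~\ref{lem_corr_thom_nd}, Proposition~\ref{prop_cg_loc}) and then verify the explicit assignments. The one organizational difference is in the backward direction: the paper first constructs an abstract cosilting $C$ from the glued Thomason filtration, shows $C^\mm \sim C(\mm)$, and then deduces $\Prod(D)=\Prod(C)$ to conclude $D=\prod_\mm C(\mm)$ is cosilting and equivalent to $C$; you instead verify the four conditions of Lemma~\ref{lem4.1} for $D$ directly, identifying ${}^{\perp_{>0}}D$ with the glued coaisle $\Vcal$ via Lemma~\ref{lem_def_loc}. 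Both arguments ultimately rest on the same local-global mechanism, so the routes are essentially equivalent; one small caveat worth making explicit (which you acknowledge parenthetically) is that Lemma~\ref{lem_cos_prod} as stated requires a pure-injective representative, which exists in every equivalence class of cofinite type since the compactly generated coaisle is definable.
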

\begin{proof}
    The assignment $C \mapsto \{C^\mm \mid \mm \in \mSpec(R)\}$ clearly preserves the appropriate equivalence classes and so is well-defined. Since $C$ is equivalent to the cosilting object $\prod_{\mm} C^\mm$ in $\Der(R)$ by Lemma~\ref{lem_cos_prod}, the assignment is injective on equivalence classes. Let $\{C(\mm) \mid \mm \in \mSpec(R)\}$ be a compatible family of cosilting objects of cofinite type which by the definition corresponds to a compatible family $\{\mathbb{X}(\mm) \mid \mSpec(R)\}$ of non-degenerate Thomason filtrations. Let $\mathbb{X}$ be the corresponding non-degenerate Thomason filtration on $\Spec(R)$ via Lemma~\ref{lem_corr_thom_nd}, which further corresponds to a cosilting object $C$ in $\Der(R)$ via Proposition~\ref{prop_nondegen_cosilting}. Let $(\Ucal,\Vcal)$ be the t-structure induced by $C$, then $C^\mm$ induces the t-structure $(\Ucal_\mm,\Vcal_\mm)$ by Lemma~\ref{lem_cosilt_coloc} and Lemma~\ref{lemma_tstr_loc}. By Proposition~\ref{prop_cg_loc}(ii) we see that $(\Ucal_\mm,\Vcal_\mm)$ corresponds to the Thomason filtration $\mathbb{X}(\mm)$ via Proposition~\ref{prop_corr_thom}. Then $(\Ucal_\mm,\Vcal_\mm)$ is the t-structure induced by $C(\mm)$ and therefore the cosilting objects $C^\mm$ and $C(\mm)$ are equivalent in $\Der(R_\mm)$ for each $\mm \in \mSpec(R)$. Finally, let us use this to show that $D = \prod_{\mm \in \mSpec(R)}C(\mm)$ is a cosilting object in $\Der(R)$ which is equivalent to $C$. We have $\Prod(C^\mm) = \Prod(C(\mm))$ for any $\mm \in \mSpec(R)$. By Lemma~\ref{lem_cos_prod}, $C$ is equivalent to $\prod_{\mm \in \mSpec(R)}C^\mm$, and so $\Prod(C) = \Prod(\prod_{\mm \in \mSpec(R)}C^\mm)$. Together, we showed that $\Prod(C) = \Prod(\prod_{\mm \in \mSpec(R)}C^\mm) = \Prod(\prod_{\mm \in \mSpec(R)}C(\mm)) = \Prod(D)$. From this, it follows easily that for any $X \in \Der(R)$ and $i \in \Z$ we have $\Hom_{\Der(R)}(X,C[i]) = 0$ if and only if $\Hom_{\Der(R)}(X,D[i]) = 0$, and so $({}^{\perp_{\leq 0}}C,{}^{\perp_{> 0}}C) = ({}^{\perp_{\leq 0}}D,{}^{\perp_{> 0}}D)$, establishing that $D$ is a cosilting object equivalent to $C$.
\end{proof}
We also have this auxiliary result using the local-global criterion for compact generation from Section~\ref{sec_three}.
\begin{proposition}
    Let $C$ be a pure-injective cosilting object in $\Der(R)$. Then $C$ is of cofinite type if and only if $C^\mathfrak{m}$ is of cofinite type for each $\mm \in \mSpec(R)$.
\end{proposition}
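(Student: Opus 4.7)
The plan is to reduce the statement to the local-global property of compact generation for homotopically smashing t-structures already proved in Proposition~\ref{prop_cg_loc}. Let $(\Ucal,\Vcal) = ({}^{\perp_{\leq 0}}C,{}^{\perp_{>0}}C)$ be the cosilting t-structure induced by $C$. The key preliminary observation is that since $C$ is pure-injective, this t-structure is homotopically smashing: indeed, $\Vcal = {}^{\perp_{>0}}C$ is then closed under pure monomorphisms and pure epimorphisms, hence it is definable (and cosuspended), so homotopic smashness follows from the second theorem in Section~\ref{S:prelim} on such t-structures.

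Now I would invoke Lemma~\ref{lem_cosilt_coloc} to see that for each maximal ideal $\mm$, the colocalization $C^\mm$ is a cosilting object in $\Der(R_\mm)$ inducing the t-structure $(\Ucal_\mm,\Vcal^\mm)$. By Lemma~\ref{lemma_tstr_loc}(iii), the homotopically smashing hypothesis yields $\Vcal^\mm = \Vcal_\mm$, and therefore the cosilting t-structure induced by $C^\mm$ in $\Der(R_\mm)$ is exactly the localized t-structure $(\Ucal_\mm,\Vcal_\mm)$ from the statement of Proposition~\ref{prop_cg_loc}. By the very definition of cofinite type, $C$ is of cofinite type iff $(\Ucal,\Vcal)$ is compactly generated in $\Der(R)$, and similarly $C^\mm$ is of cofinite type iff $(\Ucal_\mm,\Vcal_\mm)$ is compactly generated in $\Der(R_\mm)$.

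At this point the equivalence is immediate from Proposition~\ref{prop_cg_loc}(i): the homotopically smashing t-structure $(\Ucal,\Vcal)$ is compactly generated if and only if each $(\Ucal_\mm,\Vcal_\mm)$ is compactly generated. The only genuine verification is that the t-structures produced by the cosilting objects $C$ and $C^\mm$ agree with those considered in Proposition~\ref{prop_cg_loc}; the possibly trickiest ingredient there is the identification $\Vcal^\mm = \Vcal_\mm$, which is precisely where homotopic smashing (and hence pure-injectivity of $C$) is used. Once this identification is in place, there is no further obstacle, and the stated biconditional follows.
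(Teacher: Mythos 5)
Your proof is correct and takes essentially the same approach as the paper, which simply states that the result ``follows directly from Proposition~\ref{prop_cg_loc}''; you have spelled out the intermediate steps (pure-injectivity giving a homotopically smashing t-structure, the identification $\Vcal^\mm = \Vcal_\mm$ via Lemma~\ref{lemma_tstr_loc}(iii), and Lemma~\ref{lem_cosilt_coloc}) accurately, with the only minor omission being that definability also requires closure of $\Vcal = {}^{\perp_{>0}}C$ under products, which of course holds automatically for orthogonals.
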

\begin{proof}
    Follows directly from Proposition~\ref{prop_cg_loc}.
\end{proof}
We say that a cosilting object $C$ is \emph{$n$-term} for some $n \geq 0$ if $C$ is isomorphic in $\Der(R)$ to a complex of injective $R$-modules concentrated in degrees $0,1,\ldots,n-1$.
\begin{corollary}
    Let $C$ be a cosilting object in $\Der(R)$ and $n \geq 0$. Then $C$ is $n$-term if and only if $C^\mm$ is $n$-term for any $\mm \in \mSpec(R)$. In particular, the bijection of Theorem~\ref{thm4.17} restricts for any $n \geq 0$ to a bijection
    $$\left \{ \begin{tabular}{ccc} \text{ $n$-term cosilting objects $C$ }  \\ \text{ in $\Der(R)$ of cofinite type } \\ \text{up to equivalence} \end{tabular}\right \}  \xleftrightarrow{1-1}  \left \{ \begin{tabular}{ccc} \text{ Compatible families $\{C(\mm) \mid \mm \in \mSpec(R)\}$ } \\ \text{ of $n$-term cosilting objects of cofinite type } \\ \text{up to equivalence} \end{tabular}\right \}.$$
\end{corollary}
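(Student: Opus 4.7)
The plan is to establish the biconditional by treating the forward direction via a direct computation with injective resolutions, and the converse by reducing to a combinatorial condition on the associated Thomason filtration that is visibly local.

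For the direction $\Rightarrow$, I would proceed as follows. If $C$ is $n$-term, pick a representative $I^\bullet$ consisting of injective $R$-modules concentrated in degrees $[0,n-1]$. Being bounded, $I^\bullet$ is K-injective, so $C^\mm = \RHom_R(R_\mm, C) \simeq \Hom_R(R_\mm, I^\bullet)$. Since the forgetful functor $\ModRm \to \ModR$ is exact, its right adjoint $\Hom_R(R_\mm, -)$ preserves injectivity. Hence $\Hom_R(R_\mm, I^\bullet)$ is a complex of injective $R_\mm$-modules in degrees $[0, n-1]$, and $C^\mm$ is $n$-term.

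For the direction $\Leftarrow$, the strategy is to characterize $n$-term cosilting objects of cofinite type via their Thomason filtration. I would formulate and prove the following claim: a cosilting $C$ of cofinite type with associated filtration $\mathbb{X} = (X_k \mid k \in \Z)$ (via Proposition~\ref{prop_nondegen_cosilting}) is $n$-term if and only if $X_k = \Spec(R)$ for all $k < 0$ and $X_k = \emptyset$ for all $k \geq n$. Equivalently, this corresponds to the sandwich $\Der^{\geq n}(R) \subseteq \Vcal \subseteq \Der^{\geq 0}(R)$: the first inclusion follows by a degree count on chain maps from $\Der^{\geq n}$ into the shifts $C[i]$, $i > 0$ (whose nonzero components live in degrees $\leq n-1-i < n$), and the second uses that $C$ cogenerates $\Der(R)$ combined with the explicit formula $\Hom_{\Der(R)}(E[-j],C[i]) = H^{i+j}\Hom_R(E,C^\bullet)$ applied to an injective envelope $E = E(H^j(X))$ provided by Lemma~\ref{lem_tstr_techn}(ii). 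The translation to the filtration then uses Lemma~\ref{lem_tstr_techn}(i), which identifies $X_k = \{\pp : \kappa(\pp)[-k] \in \Ucal\}$.

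Once the characterization is in hand, the corollary is immediate: by Proposition~\ref{prop_corr_thom} we have $X_k = \bigcup_\mm X(\mm)_k^*$ and $\Spec(R) = \bigcup_\mm \Spec(R_\mm)^*$, so the conditions $X_k = \emptyset$ and $X_k = \Spec(R)$ are each equivalent to the corresponding local conditions $X(\mm)_k = \emptyset$ and $X(\mm)_k = \Spec(R_\mm)$ holding for every $\mm \in \mSpec(R)$. Applying the characterization both globally and locally yields the biconditional, and the restricted bijection follows by restricting the bijection of Theorem~\ref{thm4.17} to those Thomason filtrations (equivalently, cosilting objects of cofinite type) satisfying the $n$-term filtration conditions.

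The main obstacle I expect is the converse half of the filtration characterization, namely producing an explicit $n$-term injective representative from purely filtration-theoretic input. I would attack this by combining Lemma~\ref{lem_bounded_dev} (which provides an aisle $\Ucal_\#$ determined by cohomological support and forces objects of $\Vcal \cap \Der^+(R)$ to have cohomology in degrees $\geq 0$ when $X_k = \Spec(R)$ for $k < 0$) with the Grothendieck-heart structure of the cosilting $t$-structure, allowing a bounded injective resolution to be truncated within degrees $[0,n-1]$ and rebuilt into an $n$-term injective complex representing $C$.
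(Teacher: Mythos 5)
Your forward direction agrees exactly with the paper's: a bounded injective representative for $C$ in degrees $[0,n-1]$ is K-injective, so $C^\mm = \RHom_R(R_\mm,C) \cong \Hom_R(R_\mm,C)$ is again a complex of injectives in the same degrees.

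For the converse, however, you take a genuinely different route and it has a gap. The paper's argument is quite short and avoids Thomason filtrations entirely: by Lemma~\ref{lem_cos_prod}, $C$ (being pure-injective) is equivalent to $\prod_{\mm}C^\mm$, and since an injective $R_\mm$-module is injective over $R$, the product of the local $n$-term injective representatives is itself a complex of injective $R$-modules in degrees $[0,n-1]$, whence $C$ is $n$-term. Your plan instead passes through a characterization ``$C$ $n$-term $\Longleftrightarrow$ $X_k = \Spec(R)$ for $k<0$ and $X_k = \emptyset$ for $k$ large,'' and then observes that these extremal conditions are local over the cover $\Spec(R)=\bigcup_\mm\Spec(R_\mm)^*$ via Proposition~\ref{prop_corr_thom}. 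The local-global step is fine, and the ``$n$-term $\Rightarrow$ filtration conditions'' half follows readily from a cohomological count (though note an off-by-one: concentration in $[0,n-1]$ forces $X_k=\emptyset$ already for $k\geq n-1$, as one sees in the paper's own treatment of the $2$-term case in Proposition~\ref{prop3}).

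The genuine gap is the converse half of your characterization, which you acknowledge: producing from the filtration conditions alone an $n$-term injective representative of $C$. This is precisely the substantive content of the statement, and your sketch (truncate a bounded injective resolution using Lemma~\ref{lem_bounded_dev} and the Grothendieck heart) is not an argument; in particular it is not clear how to control the upper bound of the resolution from the condition $X_k=\emptyset$ for large $k$ without essentially re-proving a boundedness result for cosilting complexes. The key observation you are missing is that you never need to build a representative from scratch: once you know $C^\mm$ is $n$-term for every $\mm$, Lemma~\ref{lem_cos_prod} already hands you the explicit $n$-term model $\prod_\mm C^\mm$, and the only additional input is the elementary fact that injective $R_\mm$-modules are injective $R$-modules. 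I recommend replacing the filtration-characterization route with this direct argument, which is both shorter and free of the gap.
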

\begin{proof}
    Let us assume that $C$ is already a complex of injective $R$-modules concentrated in degrees $0,1,\ldots,n-1$. In particular, $C^\mm = \RHom_R(R_\mm,C) \cong \Hom_R(R_\mm,C)$. As $\Hom_R(R_\mm,-)$ sends injective $R$-modules to injective $R_\mm$-modules, this establishes that $C^\mm$ is $n$-term.

    For the converse, recall from Lemma~\ref{lem_cos_prod} that $C$ is equivalent to $\prod_{\mm \in \mSpec(R)}C^\mm$. Since any injective $R_\mm$-module is also injective as an $R$-module, we conclude that $C$ is $n$-term provided that $C^\mm$ is $n$-term for any $\mm \in \mSpec(R)$.
\end{proof}
\begin{corollary}\label{cor_result_noeth}
    If $R$ is noetherian, we have bijections
    $$\left \{ \begin{tabular}{ccc} \text{ Pure-injective cosilting objects }  \\ \text{ $C$ in $\Der(R)$ up to equivalence} \end{tabular}\right \}  \xleftrightarrow{1-1}  \left \{ \begin{tabular}{ccc} \text{ Compatible families $\{C(\mm) \mid \mm \in \mSpec(R)\}$ } \\ \text{ of pure-injective cosilting objects } \\ \text{up to equivalence} \end{tabular}\right \}$$
        and
    $$\left \{ \begin{tabular}{ccc} \text{ $n$-term cosilting objects $C$} \\ \text{in $\Der(R)$ up to equivalence} \end{tabular}\right \}  \xleftrightarrow{1-1}  \left \{ \begin{tabular}{ccc} \text{ Compatible families $\{C(\mm) \mid \mm \in \mSpec(R)\}$ } \\ \text{ of $n$-term cosilting objects } \\ \text{up to equivalence} \end{tabular}\right \}.$$
\end{corollary}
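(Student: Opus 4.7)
The plan is to deduce both bijections directly from Theorem~\ref{thm4.17} and its $n$-term restriction (the previous corollary), by showing that in the noetherian setting the class of pure-injective cosilting objects and the class of cosilting objects of cofinite type coincide up to equivalence, and similarly for $n$-term cosilting objects. Once this coincidence is established in $\Der(R)$ and in each $\Der(R_\mm)$, both bijections in the statement follow at once from the earlier results by restricting the classes being parametrized on both sides.

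First I would handle the pure-injective case. Since $R$ is commutative noetherian, so is every localization $R_\mm$, and Theorem~\ref{thmHN} implies that $\Der(R)$ and each $\Der(R_\mm)$ satisfy (STC). By Theorem~\ref{noeth_coftype}, every pure-injective cosilting object in these categories is then automatically of cofinite type. For the converse, any cosilting object $C$ of cofinite type induces a non-degenerate compactly generated cosilting t-structure, and by Laking's theorem \cite[Theorem 4.6]{la19} (cited in the introduction) every such t-structure is the cosilting t-structure of some pure-injective cosilting object; since equivalence of cosilting objects is decided by the induced t-structure, $C$ itself is equivalent to a pure-injective cosilting object. Thus the equivalence classes of pure-injective cosilting objects coincide with those of cofinite type in $\Der(R)$ and in each $\Der(R_\mm)$, and the first bijection is obtained by restricting Theorem~\ref{thm4.17}.

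For the second bijection, note that an $n$-term cosilting object is by definition represented by a bounded complex of injective modules, hence is a bounded cosilting complex. By Theorem~\ref{mv_bounded}, any such object is pure-injective, so by the noetherian hypothesis and Theorem~\ref{noeth_coftype} it is of cofinite type. The same argument applies to each colocalization $C^\mm \in \Der(R_\mm)$, which by the previous corollary is again $n$-term. Therefore in both $\Der(R)$ and each $\Der(R_\mm)$ the class of $n$-term cosilting objects coincides with the class of $n$-term cosilting objects of cofinite type, and the second bijection follows from the $n$-term restriction of Theorem~\ref{thm4.17} stated in the preceding corollary.

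The proof encounters no real obstacle, as every step reduces to a result already established in the paper or explicitly cited in the introduction. The most delicate ingredient is the implication ``cofinite type $\Rightarrow$ pure-injective'' used in the first bijection, which is not isolated as a separate statement in the excerpt and relies on Laking's characterization of non-degenerate compactly generated t-structures as exactly the cosilting t-structures of pure-injective cosilting objects.
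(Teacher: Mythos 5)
Your argument is correct and follows essentially the same route as the paper, whose proof is the one-line observation that the corollary follows from Theorem~\ref{noeth_coftype} and Theorem~\ref{mv_bounded} applied over $R$ and over each (still noetherian) $R_\mm$, combined with Theorem~\ref{thm4.17} and its $n$-term restriction. The only extra ingredient you supply is the converse ``cofinite type $\Rightarrow$ pure-injective up to equivalence'' via Laking's theorem; this is valid and consistent with how the paper uses \cite[Theorem 4.6]{la19} elsewhere, though strictly speaking one could avoid it by noting that both explicit assignments of Theorem~\ref{thm4.17} (colocalization and product) preserve pure-injectivity, so the bijection restricts directly.
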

\begin{proof}
    Follows immediately by recalling Theorem~\ref{noeth_coftype} and Theorem~\ref{mv_bounded}.
\end{proof}
Recall that an $R$-module $C$ is $n$-cotilting if and only if it is an $n$-term cosilting complex when considered as an object $\Der(R)$ by taking its stalk complex in degree zero. Then our correspondence also restricts to the one of \cite{tr14}. Recall from Remark~\ref{rem_noeth} that our notions of equivalence and compatible condition on families restrict perfectly well to the notions used in \cite{tr14}.
\begin{corollary}\label{cor:cotilting-correspond}
    If $R$ is noetherian, we have bijections for any $n > 0$
    $$\left \{ \begin{tabular}{ccc} \text{ $n$-cotilting modules $C$ in} \\ \text{$\ModR$ up to equivalence} \end{tabular}\right \}  \xleftrightarrow{1-1}  \left \{ \begin{tabular}{ccc} \text{ Compatible families $\{C(\mm) \mid \mm \in \mSpec(R)\}$ } \\ \text{ of $n$-cotilting modules up to equivalence} \end{tabular}\right \}.$$
\end{corollary}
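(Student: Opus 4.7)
The plan is to deduce this from Corollary~\ref{cor_result_noeth} by restricting the bijection between $n$-term cosilting objects in $\Der(R)$ up to equivalence and compatible families of $n$-term cosilting objects over the $R_\mm$'s, to the subclasses of cotilting modules on both sides. As recalled just above the statement, an $R$-module $M$ is $n$-cotilting if and only if its stalk complex $M[0]$ is an $n$-term cosilting object of $\Der(R)$, and equivalence of $n$-cotilting modules in $\ModR$ agrees with equivalence of the corresponding cosilting objects in $\Der(R)$ (because $\Prod$ is computed the same way in both categories for stalk complexes of modules). So the work reduces to showing that both assignments $C \mapsto \{C^\mm\}_\mm$ and $\{C(\mm)\}_\mm \mapsto \prod_\mm C(\mm)$ of Theorem~\ref{thm4.17} preserve the subclasses.

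The easy direction is from a compatible family $\{C(\mm)\}$ of $n$-cotilting $R_\mm$-modules back to $R$: the assembled cosilting object is the literal module-theoretic product $\prod_\mm C(\mm) \in \ModR$, which is automatically concentrated in degree zero. Its $R$-injective dimension is at most $n-1$ because each $C(\mm)$ has such an $R_\mm$-injective resolution and any $R_\mm$-injective module remains injective over the noetherian ring $R$ (since $\Hom_R(-,E) \cong \Hom_{R_\mm}(- \otimes_R R_\mm, E)$ is exact by flatness of $R_\mm$). The remaining cotilting axioms (Ext-vanishing against $C^I$, and the cogenerator property) follow from the cosilting property of $\prod_\mm C(\mm)$ in $\Der(R)$ combined with the fact that it lives in degree zero.

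The harder direction is from an $n$-cotilting $R$-module $C$: although $C^\mm = \RHom_R(R_\mm, C)$ is an $n$-term cosilting object in $\Der(R_\mm)$ by Lemma~\ref{lem_cosilt_coloc} and the preceding corollary, it need not itself be a stalk complex, since its cohomology in degree $i$ computes $\Ext^i_R(R_\mm, C)$, which can be nonzero for $i>0$. I would bridge this by invoking the classification of $n$-cotilting modules over a noetherian commutative ring: both $n$-cotilting modules over $R_\mm$ up to equivalence and $n$-term cosilting complexes in $\Der(R_\mm)$ up to equivalence are parametrized by the same set of Thomason filtrations of $\Spec(R_\mm)$ (non-degenerate and concentrated in a length-$n$ window). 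Consequently, the equivalence class of $C^\mm$ in $\Der(R_\mm)$ contains an $n$-cotilting $R_\mm$-module. Invoking this classification at each $R_\mm$ is the main obstacle; once granted, the stated bijection follows from Corollary~\ref{cor_result_noeth} combined with the observations above, and by Remark~\ref{rem_noeth} the compatibility and equivalence conditions reduce to those in \cite{tr14}.
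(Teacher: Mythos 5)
Your proposal correctly identifies the crux of the argument --- that $C^\mm = \RHom_R(R_\mm,C)$ is \emph{a priori} only an $n$-term cosilting complex in $\Der(R_\mm)$ and might not be a stalk complex --- but the bridge you build to close this gap is unsound. You assert that $n$-cotilting $R_\mm$-modules up to equivalence and $n$-term cosilting complexes in $\Der(R_\mm)$ up to equivalence are parametrized by \emph{the same} set of (length-$n$, non-degenerate) Thomason filtrations, so that the equivalence class of $C^\mm$ automatically contains an $n$-cotilting module. That claim is not established anywhere in the paper and is not correct as stated: the passage from $n$-term cosilting complexes to modules is through $H^0$ (cf. Theorem~\ref{th1} for the $2$-term case), and a $2$-term cosilting complex $E^0 \to E^1$ is generally not quasi-isomorphic to its $H^0$ as a stalk complex. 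In fact, already for $n=2$ the class of Thomason sets realized by all cosilting modules is strictly larger than the class realized by $1$-cotilting modules (the latter also carry a faithfulness/injective-dimension constraint), so the two parametrizations do not coincide, and your argument would hand back a cosilting object over $R_\mm$ with no guarantee that it is a cotilting module.

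The paper closes this gap much more directly: it observes that, since $R_\mm$ is a flat $R$-module and every cotilting class contains all flat modules, we have $\Ext^i_R(R_\mm,C)=0$ for all $i>0$, so $C^\mm = \RHom_R(R_\mm,C)$ \emph{is} literally the stalk complex $\Hom_R(R_\mm,C)[0]$ --- there is no need to chase equivalence classes inside $\Der(R_\mm)$ at all. With that in hand, $\Hom_R(R_\mm,C)$ is an $n$-cotilting $R_\mm$-module on the nose, and the bijection of Corollary~\ref{cor_result_noeth} restricts as desired. You should replace the classification argument with this Ext-vanishing observation (which is elementary and self-contained); your handling of the converse direction via $\prod_\mm C(\mm)$ and of the compatibility/equivalence bookkeeping via Remark~\ref{rem_noeth} is fine and matches the paper.
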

\begin{proof}
    The only non-trivial task is to show that if $C$ is an $n$-cotilting module then $\mathbf{R}\Hom(R_\mm,C)$ is isomorphic to the $R$-module $\Hom_R(R_\mm,C)$ in $\Der(R)$. For this, it is sufficient to show that $\Ext_R^i(R_\mm,C) = 0$ for all $i>0$. But this follows from the well-known fact that any cotilting class in $\ModR$ contains all flat $R$-modules.
\end{proof}
\subsection{Cosilting modules over noetherian rings}
Given an $R$-module $M$, we use $\Cogen(M)$ to denote the subcategory of all $R$-modules \emph{cogenerated} by $M$, that is, all $R$-modules admitting a monomorphism into an arbitrary direct product of copies of $M$. Given a map $Q_{0}\stackrel{\eta}\rightarrow Q_{1}$ between injective $R$-modules we define a subcategory
$$\Bcal_\eta = \{M \in \ModR \mid \Hom_R(M,\eta) \text{ is an epimorphism}\}.$$
We say that an $R$-module $C$ is a \emph{cosilting module} if there is an injective copresentation $0\rightarrow C\rightarrow Q_{0}\stackrel{\eta}\rightarrow Q_{1}$ such that $\Bcal_\eta = \Cogen(C)$. We say that two cosilting modules $C,C'$ are \emph{equivalent} if they induce the same cosilting class, that is, $\Cogen(C) = \Cogen(C')$. It is well-known that cosilting modules $C$ and $C'$ are equivalent if and only if $\Prod(C) = \Prod(C')$.

The cosilting modules were introduced a module-theoretic shadows of 2-term cosilting complexes in \cite{br17}, dualizing results of \cite{li16}. More precisely, by an argument dual to that of \cite[Theorem 4.11]{li16}, we have the following result.
\begin{theorem}\label{th1}
    Let $R$ be a ring. Then there is a bijection

    $$\left \{ \begin{tabular}{ccc}\text{ $2$-term cosilting complexes } \\ \text{ up to equivalence} \end{tabular}\right \}  \xleftrightarrow{1-1}  \left \{ \begin{tabular}{ccc} \text{ cosilting $R$\text{-}modules} \\ \text{up to equivalence} \end{tabular}\right \}.$$
 This correspondence assigns to a $2$-term cosilting complex $\sigma$ the $R$-module ${\rm H}^{0}(\sigma)$.

\end{theorem}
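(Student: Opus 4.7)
The plan is to exhibit explicit inverse assignments between equivalence classes and verify the axioms on each side. Given a 2-term cosilting complex $\sigma$, I represent it up to quasi-isomorphism as a complex $(Q_{0} \xrightarrow{\eta} Q_{1})$ of injective modules concentrated in degrees $0$ and $1$, set $C = \h^{0}(\sigma) = \ker\eta$, and take the natural injective copresentation $0 \to C \to Q_{0} \xrightarrow{\eta} Q_{1}$ as the witness to $C$ being a cosilting module. Conversely, given a cosilting module $C$ with injective copresentation $0 \to C \to Q_{0} \xrightarrow{\eta} Q_{1}$, I send it to the 2-term complex $\sigma = (Q_{0} \xrightarrow{\eta} Q_{1})$. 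The identity $\h^{0}(\sigma) = C$ makes the inversion manifest, so the content splits into two parts: (a) $\sigma$ is a 2-term cosilting complex if and only if the associated $C$ is a cosilting module, and (b) the assignments respect the two equivalence relations.

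For the forward direction, the core computations are that for any $R$-module $M$ placed in degree zero, $\Hom_{\Der(R)}(M, \sigma[i]) = 0$ for $i \notin \{0,1\}$, $\Hom_{\Der(R)}(M,\sigma) = \Hom_{R}(M,C)$, and $\Hom_{\Der(R)}(M, \sigma[1]) = \coker(\Hom_{R}(M, \eta))$. These immediately identify $\Bcal_\eta = \Vcal \cap \ModR$, where $\Vcal = {}^{\perp_{>0}}\sigma$ is the cosilting coaisle; unwinding further via the exact sequence $0 \to C \to Q_{0} \to \im\eta \to 0$ gives the reformulation $\Bcal_\eta = \{M : \Hom_{R}(M, \h^{1}(\sigma)) = 0 \text{ and } \Ext^{1}_{R}(M, C) = 0\}$. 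The inclusion $\Cogen(C) \subseteq \Bcal_\eta$ reduces to three checks: $\Bcal_\eta$ is closed under products (automatic from product-closure of $\Vcal$); $\Bcal_\eta$ is closed under submodules (a degree count yields $\ModR[-1] \subseteq \Vcal$, and one then rotates a short exact sequence $0 \to M' \to M \to M/M' \to 0$ into a triangle $(M/M')[-1] \to M' \to M \to M/M'$ and invokes extension-closure of $\Vcal$); and $C \in \Bcal_\eta$ (apply $\Hom_{\Der(R)}(-, \sigma[1])$ to the truncation triangle $C \to \sigma \to \h^{1}(\sigma)[-1] \to C[1]$, combining the cosilting axiom $\Hom_{\Der(R)}(\sigma, \sigma[1]) = 0$ of Lemma~\ref{lem4.1}(ii) with the trivial vanishing of $\Hom_{\Der(R)}(\h^{1}(\sigma)[-1], \sigma[j])$ for $j \geq 1$).

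The main obstacle I expect is the reverse inclusion $\Bcal_\eta \subseteq \Cogen(C)$, which I propose to resolve as follows. For $M \in \Bcal_\eta$, set $K = \bigcap_{f \in \Hom_{R}(M,C)}\ker(f)$, so that $M/K$ embeds into $C^{\Hom_{R}(M,C)}$ and thus lies in $\Cogen(C) \subseteq \Bcal_\eta$. If $K \neq 0$, then $K \in \Bcal_\eta$ by the submodule closure above, and the cogenerator axiom ${}^{\perp_{\Z}}\sigma = 0$ from Lemma~\ref{lem4.1}(i) applied to $K$ forces $\Hom_{\Der(R)}(K, \sigma[i]) \neq 0$ for some $i \in \Z$. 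By the degree computations in the previous paragraph this is only possible for $i = 0$, producing a nonzero map $g : K \to C$. The vanishing $\Ext^{1}_{R}(M/K, C) = 0$ (valid because $M/K \in \Bcal_\eta$), applied to the short exact sequence $0 \to K \to M \to M/K \to 0$, yields the surjection $\Hom_{R}(M, C) \twoheadrightarrow \Hom_{R}(K, C)$, so $g$ extends to some $\tilde g : M \to C$. By the defining property of $K$, however, $K \subseteq \ker(\tilde g)$, contradicting $g \neq 0$. Hence $K = 0$ and $M \in \Cogen(C)$.

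For the backward direction, given a cosilting module $C$ with its injective copresentation, the complex $\sigma = (Q_{0} \to Q_{1})$ is bounded with injective terms, hence pure-injective in $\Der(R)$; by Lemma~\ref{lem4.1} it then suffices to verify conditions (i)--(iii). Condition (ii) $\sigma \in {}^{\perp_{>0}}\sigma$ translates via the truncation triangle above into $\Ext^{1}_{R}(C,C) = 0$ and $\Hom_{R}(C, \h^{1}(\sigma)) = 0$, both encoded in $C \in \Bcal_\eta = \Cogen(C)$. Condition (iii) (product-closure of ${}^{\perp_{>0}}\sigma$) follows by a degree-wise analysis using the triangle $Q_{1}[-1] \to \sigma \to Q_{0} \xrightarrow{\eta} Q_{1}$ together with product-closure of $\Bcal_\eta = \Cogen(C)$. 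Condition (i) (cogeneration of $\Der(R)$) reduces, via a standard soft-truncation argument, to the module-level statement that a module $M \in \Bcal_\eta = \Cogen(C)$ with $\Hom_{R}(M, C) = 0$ must be zero, which holds because $M$ embeds into $C^{I}$ while every coordinate projection $M \to C$ vanishes. Finally, well-definedness on equivalence classes is direct: equivalent 2-term cosilting complexes induce the same coaisle $\Vcal$, and hence the same $\Bcal_\eta = \Cogen(\h^{0}(\sigma))$, so their $\h^{0}$'s are equivalent cosilting modules; conversely, equivalent cosilting modules share the class $\Cogen(C) = \Bcal_\eta$, determining the same cosilting t-structure and hence equivalent $\sigma$'s.
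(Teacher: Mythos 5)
The paper proves this theorem in a single sentence, by citing the dual of \cite[Theorem 4.11]{li16}, so what you have written is a genuinely more self-contained alternative: you unpack the bijection from scratch. Your forward and backward constructions agree with the intended ones, and the key new idea --- establishing $\Bcal_\eta \subseteq \Cogen(C)$ by considering $K = \bigcap_{f \in \Hom_R(M,C)} \ker f$ and playing the cogeneration axiom ${}^{\perp_\Z}\sigma = 0$ against the degree bound $\Hom_{\Der(R)}(K,\sigma[i]) = 0$ for $i \notin \{0,1\}$ --- is correct and does the real work.

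One intermediate claim is stated too strongly, though. You assert that $\Bcal_\eta = \{M : \Hom_R(M,H^1(\sigma)) = 0 \text{ and } \Ext^1_R(M,C)=0\}$. Applying $\Hom_{\Der(R)}(M,-)$ to the truncation triangle $C \to \sigma \to H^1(\sigma)[-1] \to C[1]$ gives the exact sequence
$$0 \to \Ext^1_R(M,C) \to \Hom_{\Der(R)}(M,\sigma[1]) \to \Hom_R(M,H^1(\sigma)) \to \Ext^2_R(M,C) \to 0,$$
so $M \in \Bcal_\eta$ is equivalent to $\Ext^1_R(M,C)=0$ \emph{together with the injectivity of the connecting map} $\Hom_R(M,H^1(\sigma)) \to \Ext^2_R(M,C)$, which is weaker than $\Hom_R(M,H^1(\sigma))=0$. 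In the forward direction this is harmless because you only ever use the (correct) implication $M \in \Bcal_\eta \Rightarrow \Ext^1_R(M,C)=0$. But in the backward direction, when verifying condition~(ii) of Lemma~\ref{lem4.1}, you deduce $\Hom_R(C,H^1(\sigma))=0$ from $C \in \Bcal_\eta = \Cogen(C)$; that deduction does not follow from the exact sequence above. The fix is immediate and does not require that vanishing: $C \in \Bcal_\eta$ gives $\Hom_{\Der(R)}(C,\sigma[1]) = \coker\Hom_R(C,\eta) = 0$ directly, and applying $\Hom_{\Der(R)}(-,\sigma[1])$ to the same truncation triangle (exactly as you do in the forward direction) then yields $\Hom_{\Der(R)}(\sigma,\sigma[1]) = 0$; higher positive shifts vanish by degree reasons. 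With this adjustment the proof is complete.
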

Now let $R$ be a commutative noetherian ring. Then the equivalence classes of cosilting modules are in natural bijection with Thomason sets \cite{hr17}. Let $0\rightarrow C\rightarrow Q_{0}\stackrel{\eta}\rightarrow Q_{1}$ be an injective copresentation for an $R$-module $C$. If $C$ is cosilting  with respect to $\eta$ then it will induce a unique torsion pair $(^{\perp}C, \mathrm{Cogen}(C))$ up to equivalence, see \cite[Corollary 3.5]{br17}, where $^{\perp}C = \{M \in \ModR \mid \Hom_R(M,C) = 0\}$. We know that over a commutative ring $R$, there is a bijective correspondence between
 hereditary torsion pairs of finite type, and  Thomason subsets of $\Spec(R)$, see Theorem~\ref{thm_torsion_pair}. Note that the cosilting class $\mathrm{Cogen}(C)$ is a definable class in $\ModR$, so it is closed under direct limits.
Therefore, since $R$ is noetherian, $(^{\perp}C, \mathrm{Cogen}(C))$ is a hereditary torsion pair of finite type, see \cite[Lemma 4.2]{hr17}. The Thomason subset corresponding to $(^{\perp}C, \mathrm{Cogen}(C))$ is $Y=\bigcup\{V(I)\mid I~~f.g.~~ \text{ideal~~such~~that}~~R/I\in{^{\perp}C}\}$. 

In Proposition~\ref{prop3}, we show that this Thomason set coincides with the one obtained from the cosilting module by passing to the 2-term cosilting complex with Theorem~\ref{th1}, and then extracting the zero term Thomason set of the filtration associated via Proposition~\ref{prop_nondegen_cosilting}.

\begin{definition}
    Let $C(\mm)$ be a cosilting $R_\mm$-module for each $\mm \in \mSpec(R)$ corresponding to a  Thomason subset $X(\mm)$ in $\Spec(R_\mm)$. The family $\{C(\mm) \mid \mm \in \mSpec(R)\}$ is said to be \emph{compatible} if the family $\{X(\mm) \mid \mm \in \mSpec(R)\}$ is compatible. We say that two compatible families $\{C(\mm) \mid \mm \in \mSpec(R)\}$ and $\{D(\mm) \mid \mm \in \mSpec(R)\}$ of cosilting objects of cofinite type are \emph{equivalent} if the cosilting modules $C(\mm)$ and $D(\mm)$ are equivalent for each $\mm \in \mSpec(R)$.
\end{definition}

\begin{proposition}\label{prop3}Let $R$ be a commutative noetherian ring. Then there is a bijection

    $$\left \{ \begin{tabular}{ccc}\text{ Compatible families }\\ \text{$\{\sigma(\mm) \mid \mm \in \mSpec(R)\}$ } \\ \text{ of $2$-term cosilting complexes } \\ \text{up to equivalence} \end{tabular}\right \}  \xleftrightarrow{1-1}  \left \{ \begin{tabular}{ccc} \text{ Compatible families}\\ \text{ $\{C(\mm) \mid \mm \in \mSpec(R)\}$ } \\ \text{ of  cosilting  $R_{\mm}$\text{-}modules } \\ \text{up to equivalence} \end{tabular}\right \}.$$
 This correspondence assigns to a $2$-term cosilting complex $\sigma(\mm)$ the $R$-module ${\rm H}^{0}(\sigma(\mm))$.
\end{proposition}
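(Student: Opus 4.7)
My strategy is to reduce the statement to the pointwise bijection of Theorem~\ref{th1} at each stalk $R_\mm$, combined with an explicit identification of the two Thomason invariants appearing on the complex side and the module side. Since $R$ is noetherian, so is each $R_\mm$, and Theorem~\ref{th1} applied to $R_\mm$ yields a bijection between equivalence classes of $2$-term cosilting complexes $\sigma(\mm)$ and equivalence classes of cosilting $R_\mm$-modules via $\sigma(\mm)\mapsto H^0(\sigma(\mm))$. Hence both assignments in the statement are well-defined on equivalence classes componentwise; the remaining task is to verify that they send compatible families to compatible families.

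The key observation is that the Thomason filtration $\mathbb{X}(\mm)=(X(\mm)_n\mid n\in\Z)$ associated to $\sigma(\mm)$ via Proposition~\ref{prop_nondegen_cosilting} has a very constrained shape. Specifically, I would show that $X(\mm)_n=\Spec(R_\mm)$ for $n\leq -1$ (since $\sigma(\mm)$ is concentrated in degrees $[0,1]$, forcing $\Der^{\leq -1}(R_\mm)\subseteq \Ucal(\mm)$ by the standard orthogonality of the natural t-structure), and $X(\mm)_n=\emptyset$ for $n\geq 1$ (the latter by observing that $M[-n]\in\Vcal(\mm)$ for every $R_\mm$-module $M$ when $n\geq 1$, because $\sigma(\mm)[i+n]$ is then concentrated in strictly negative degrees for $i>0$; plugging in $M=\kappa(\pp)$ forces $\kappa(\pp)[-n]\notin\Ucal(\mm)$). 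The remaining level $n=0$ is characterized through Lemma~\ref{lem_tstr_techn}(i): $\pp\in X(\mm)_0$ iff $\kappa(\pp)\in\Ucal(\mm)$ iff $\Hom_{\Der(R_\mm)}(\kappa(\pp),\sigma(\mm)[j])=0$ for all $j\leq 0$, which via the injective copresentation $0\to C(\mm)\to Q_0\to Q_1$ reduces to $\Hom_{R_\mm}(\kappa(\pp),C(\mm))=0$ (higher negative shifts vanish automatically because $\sigma(\mm)$ is a complex of injectives in nonnegative degrees).

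On the module side, the Thomason set associated to $C(\mm)$ via the torsion pair $({}^{\perp}C(\mm),\Cogen(C(\mm)))$ and Theorem~\ref{thm_torsion_pair} is $Y(\mm)=\bigcup\{V(I)\mid I\text{ f.g.},\, R_\mm/I\in{}^{\perp}C(\mm)\}$. Since $R_\mm$ is noetherian, residue fields suffice to detect membership in the torsion class, which gives $Y(\mm)=\{\pp\mid\Hom_{R_\mm}(\kappa(\pp),C(\mm))=0\}$ and hence $Y(\mm)=X(\mm)_0$. Consequently the filtration $\mathbb{X}(\mm)$ is entirely encoded by the single Thomason set $Y(\mm)$, and compatibility of the family $\{\mathbb{X}(\mm)\}$ in the sense of Definition~\ref{def:compatible-of-Thomason} (which is defined level-by-level) is equivalent to compatibility of the single-level family $\{Y(\mm)\}$, since the other levels form the constant families $\{\Spec(R_\mm)\}$ or $\{\emptyset\}$, both trivially compatible. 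This yields the desired bijection between compatible families, with the map being $\sigma(\mm)\mapsto H^0(\sigma(\mm))$ componentwise.

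I expect the main obstacle to be the concrete verifications that $X(\mm)_n=\emptyset$ for $n\geq 1$ and that $X(\mm)_0=Y(\mm)$; both rely on careful bookkeeping with the cosilting t-structure and the injective nature of the terms of $\sigma(\mm)$. Once these identifications are in place, the rest of the argument is a clean formal combination of Theorem~\ref{th1} applied pointwise with the Thomason-set correspondence of Proposition~\ref{prop_corr_thom}.
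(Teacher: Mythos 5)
Your proposal is correct and follows essentially the same route as the paper's own proof: reduce to the pointwise bijection of Theorem~\ref{th1} over each $R_\mm$, identify $X(\mm)_0$ with the Thomason set $Y(\mm)$ associated to $C(\mm)$ via the isomorphism $\Hom_{\Der(R_\mm)}(\kappa(\pp),\sigma(\mm))\cong\Hom_{R_\mm}(\kappa(\pp),C(\mm))$, and observe that the remaining levels of the filtration are the trivially compatible constants $\Spec(R_\mm)$ (for $n<0$) and $\emptyset$ (for $n\geq 1$). The only cosmetic difference is that you compute the trivial levels directly from the $2$-term/injective structure, whereas the paper derives the vanishing for $n\geq 1$ from the eventual constancy of the filtration combined with the non-degeneracy condition of Proposition~\ref{prop_nondegen_cosilting}; both are fine.
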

\begin{proof} The assignment $\sigma(\mm)\rightarrow{\rm H}^{0}(\sigma(\mm))$ clearly preserves the equivalence and so is well-defined. By Theorem \ref{th1},  it remains to prove that the class
$\{\sigma(\mm) \mid \mm \in \mSpec(R)\}$ is compatible if and only if $\{C(\mm) \mid \mm \in \mSpec(R)\}$ is compatible.
  Let $\mathbb{X}(\mm)  = (X(\mm)_n \mid n \in \Z) $ be the non-degenerate Thomason filtration of $\Spec(R_\mm)$ corresponding to $\sigma(\mm)$  for each $\mm \in \mSpec(R)$. Recall that the family $\{\mathbb{X}(\mm) \mid \mm \in \mSpec(R)\}$ is compatible if $\{X(\mm)_n \mid \mm \in \mSpec(R)\}$ is a compatible family of Thomason subsets for each $n \in \Z$.
Let $Y(\mm)$ be the Thomason subset of $\Spec(R_\mm)$ corresponding to $C(\mm)$ for each $\mm \in \mSpec(R)$.

First, we claim that $X(\mm)_0=Y(\mm)$. In fact, by \cite[Theorem 5.1]{hr20}, we have
 \begin{center}
 $X(\mm)_0=\bigcup\{V(I_{\mm})\mid I~~ \text{is~~an~~ ideal~~of}~~R~~ \text{such~~that}~~R_\mm/I_{\mm}\in{^{\perp_{\leq0}}\sigma(\mm)}\}$,
$Y(\mm)=\bigcup\{V(J_{\mm})\mid J~~ \text{is~~ an ideal~~of}~~R~~ \text{such~~that}~~R_\mm/J_{\mm}\in{^{\perp}{\rm H}^{0}(\sigma(\mm))}\}$.
  \end{center}
Since $\sigma(\mm)$ is a $2$-term cosilting complex concentrated in degree 0 and 1, we see that  ${\rm H}^{i}(\sigma(\mm))=0$ for all $i<0$. Then the claim follows by $\Hom_{\Der(R_\mm)}(R_\mm/I_\mm,\sigma(\mm)[<0])=0$ and the isomorphism
\begin{align*}
\Hom_{\Der(R_\mm)}(R_\mm/I_\mm,\sigma(\mm)) \cong \Hom_{R_\mm}(R_\mm/I_\mm,{\rm H}^{0}(\sigma(\mm))).
\end{align*}
Moreover, one can check that $X(\mm)_n=\bigcup\{V(I_\mm)\mid I~~ \text{is~~an~~ideal~~of~~ $R$~~such~~that}~~R_\mm/I_\mm[-n]\in{^{\perp_{\leq0}}\sigma(\mm)}\}=\Spec(R_\mm)$ for all $n<0$
and $X(\mm)_n=X(\mm)_{n+1}=\cdots$ for all $n\geq1$. On the other hand, since the Thomason filtration $\mathbb{X}(\mm)$ is non-degenerate, that is, $\bigcap_{n \in \Z}X(\mm)_n = \emptyset$ and $\bigcup_{n \in \Z}X(\mm)_n = \Spec(R)$, we see that $X(\mm)_n=\emptyset$ for all $n\geq1$, and $X(\mm)_n=\Spec(R)$ for all $n<0$. Therefore, we easily get that $\{\mathbb{X}(\mm) \mid \mm \in \mSpec(R)\}$ is compatible
if and only if $\{Y(\mm) \mid \mm \in \mSpec(R)\}$ is compatible.
\end{proof}

\begin{corollary}\label{corollary:cosilting-correspond}Let $R$ be a commutative noetherian ring. Then there is a bijection

    $$\left \{ \begin{tabular}{ccc} \text{ cosilting $R$\text{-}modules} \\ \text{up to equivalence} \end{tabular}\right \}  \xleftrightarrow{1-1}  \left \{ \begin{tabular}{ccc} \text{ Compatible families $\{C(\mm) \mid \mm \in \mSpec(R)\}$ } \\ \text{ of  cosilting  $R_{\mm}$\text{-}modules up to equivalence} \end{tabular}\right \}$$
     induced by the assignment
    $$C \mapsto \{C^\mm \mid \mm \in \mSpec(R)\}$$
    and
    $$\{C(\mm) \mid \mm \in \mSpec(R)\} \mapsto \prod_{\mm \in \mSpec(R)}C(\mm).$$
\end{corollary}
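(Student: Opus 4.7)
The plan is to chain together three already-established bijections. Theorem~\ref{th1} identifies cosilting $R$-modules with $2$-term cosilting complexes in $\Der(R)$ (up to equivalence) via $\sigma \mapsto H^{0}(\sigma)$. The $n$-term refinement of Theorem~\ref{thm4.17}, specialized to $n = 2$, then matches $2$-term cosilting complexes in $\Der(R)$ with compatible families of $2$-term cosilting complexes over $\Der(R_\mm)$; in the noetherian setting the cofinite-type hypothesis is automatic because bounded cosilting complexes are pure-injective by Theorem~\ref{mv_bounded} and every pure-injective cosilting object is of cofinite type by Theorem~\ref{noeth_coftype}. Finally, Proposition~\ref{prop3} matches compatible families of $2$-term cosilting complexes over $\Der(R_\mm)$ with compatible families of cosilting $R_\mm$-modules, again through $\sigma(\mm) \mapsto H^{0}(\sigma(\mm))$. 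Composing these three bijections delivers the claimed correspondence.

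For the explicit forward assignment, choose an injective copresentation $\sigma = (E^{0} \xrightarrow{\eta} E^{1})$ of the cosilting module $C$ with $E^{0},E^{1}$ injective $R$-modules and $C = \ker \eta$. Since $R$ is noetherian, $\Hom_R(R_\mm, E^{i})$ is injective over $R_\mm$, so $\sigma^\mm$ is computed by applying $\Hom_R(R_\mm,-)$ term-wise and again has injective components concentrated in degrees $0$ and $1$. Left-exactness then gives $H^{0}(\sigma^\mm) = \Hom_R(R_\mm, C)$, which is precisely the cosilting $R_\mm$-module attached to $\sigma^\mm$ by Theorem~\ref{th1} over $R_\mm$. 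For the explicit inverse, given a compatible family $\{C(\mm)\}$ of cosilting $R_\mm$-modules, pick $2$-term injective representatives $\sigma(\mm)$ of each; by Theorem~\ref{thm4.17} the product $\prod_\mm \sigma(\mm)$ is a cosilting object in $\Der(R)$ corresponding to the family, and since products of injectives are injective and $H^{0}$ commutes with products, the cosilting $R$-module attached to it is $H^{0}(\prod_\mm \sigma(\mm)) = \prod_\mm C(\mm)$.

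The main obstacle is reconciling the assignment $C \mapsto \Hom_R(R_\mm, C)$ above with the derived notation $C^\mm = \mathbf{R}\Hom_R(R_\mm, C)$ from Section~\ref{sec_three}; equivalently, showing $\Ext^{i}_R(R_\mm, C) = 0$ for all $i > 0$. The case $i = 1$ is immediate from pure-injectivity of $C$ (Theorem~\ref{mv_bounded}) together with flatness of $R_\mm$: any extension $0 \to C \to X \to R_\mm \to 0$ is pure exact and hence split. For higher $i$, the required vanishing should follow by dimension shifting using the Govorov--Lazard presentation of $R_\mm$ as a directed colimit of finite free $R$-modules together with a Roos-type spectral sequence, in the spirit of the flat-containment argument used for cotilting modules in Corollary~\ref{cor:cotilting-correspond}. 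Alternatively, the corollary can be read with the convention $C^\mm := \Hom_R(R_\mm, C)$, in which case the three-step composition above is unambiguous.
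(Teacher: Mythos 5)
Your proof is correct and follows essentially the same route as the paper: it composes Theorem~\ref{th1}, the $n=2$ case of Corollary~\ref{cor_result_noeth} (which you re-derive from Theorems~\ref{mv_bounded} and~\ref{noeth_coftype}), and Proposition~\ref{prop3}. Your extra verification that $\mathbf{R}\Hom_R(R_\mm,C)\cong\Hom_R(R_\mm,C)$ for a cosilting module $C$ (via pure-injectivity of $C$ and flatness of $R_\mm$, with flat syzygies handling the higher $\Ext$ groups) is a detail the paper leaves implicit, and it is sound.
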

\begin{proof}
By Corollary~\ref{cor_result_noeth}, we know that there  are bijections between $2$-term cosilting complexes $C$ in $\Der(R)$ up to equivalence and compatible families $\{C(\mm) \mid \mm \in \mSpec(R)\}$  of $2$-term cosilting complexes up to equivalence. Thus the result follows by Theorem \ref{th1} and Proposition \ref{prop3}.
\end{proof}
\section{Silting objects}\label{S:silting}
Let us fix an injective cogenerator $W$ in $\ModR$, and denote by $(-)^+$ the duality functor $(-)^+ = \RHom_R(-,W): \Der(R) \rightarrow \Der(R)$. The following result extends the well-known explicit duality between $n$-tilting $R$-modules and $n$-cotilting $R$-modules of cofinite type.
\begin{theorem}\cite[Theorem 3.3, Theorem 3.8]{lihrbek}\label{thm_duality}
    Let us consider the assignment $\Phi: T \mapsto T^+$ on objects of $\Der(R)$.
    Then:
    \begin{enumerate}
        \item[(i)] $\Phi$ induces an injective map from the set of equivalence classes of silting objects in $\Der(R)$ of finite type to cosilting objects in $\Der(R)$ of cofinite type.
        \item[(ii)] $\Phi$ induces a bijective map from the set of equivalence classes of bounded silting complexes in $\Der(R)$ to bounded cosilting complexes in $\Der(R)$ of cofinite type.
        \item[(iii)] If $R$ is commutative noetherian then $\Phi$ induces an bijective map from the set of equivalence classes of silting objects in $\Der(R)$ of finite type to pure-injective cosilting objects in $\Der(R)$.
    \end{enumerate}
\end{theorem}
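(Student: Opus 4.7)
The plan is to base everything on the key adjunction
\[
\Hom_{\Der(R)}(X, Y^+) \cong \Hom_{\Der(R)}(X \otimes^{\mathbf{L}}_R Y, W) \cong \Hom_{\Der(R)}(Y, X^+),
\]
arising from derived tensor-Hom and the fact that $W$ is an injective $R$-module. Two consequences will be exploited throughout: an \emph{orthogonality flip}, $X \in {}^{\perp_{>0}}T^+$ iff $X^+ \in T^{\perp_{>0}}$, and a \emph{compact duality}: for $S \in \Der(R)^c$ with strong dual $S^\vee := \RHom_R(S, R) \in \Der(R)^c$, one has $\Hom(S, X^+) = 0$ iff $\Hom(S^\vee, X) = 0$. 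Together these transport compact generation of the silting coaisle $T^{\perp_{>0}}$ to compact generation of the putative cosilting coaisle ${}^{\perp_{>0}}T^+$.

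For (i), given silting $T$ of finite type with $T^{\perp_{>0}} = \Scal^{\perp_0}$ for compacts $\Scal$, I would verify the four conditions of Lemma~\ref{lem4.1} for $T^+$: cogeneration from $W$ being a cogenerator; $T^+ \in {}^{\perp_{>0}}T^+$ as the dual of the silting property $T \in T^{\perp_{>0}}$; product-closure automatic; and condition (iv) delivered by the orthogonality flip and compact duality, which rewrite ${}^{\perp_{>0}}T^+$ as the right $^{\perp_0}$ of the compact set $\{S^\vee \mid S \in \Scal\}$, yielding both a t-structure and cofinite type in one stroke. Injectivity of $\Phi$ on equivalence classes is then immediate, since silting equivalence is controlled by $T^{\perp_{>0}}$ and cosilting equivalence by ${}^{\perp_{>0}}T^+$, and the orthogonality flip places these two data in bijection.

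For (ii), bounded silting complexes are of finite type by Theorem~\ref{mv_bounded}, so (i) applies, and termwise $\Hom_R(-, W)$ visibly converts a bounded complex of finitely generated projectives into a bounded complex of injectives. Surjectivity is obtained by identifying a bounded cosilting $C$ of cofinite type with its non-degenerate Thomason filtration $\mathbb{X}$ via Proposition~\ref{prop_nondegen_cosilting}, constructing a bounded silting $T$ realizing the same filtration (for instance, from shifted Koszul complex generators $K(I)[-n]$ with $V(I) \subseteq X_n$), and matching the associated t-structures via the coaisle calculus of part (i). For (iii), Theorem~\ref{noeth_coftype} identifies pure-injective cosilting with cosilting of cofinite type over noetherian $R$, so (i) already sends $\Phi$ injectively into pure-injective cosilting objects. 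The remaining surjectivity is the \emph{principal obstacle}: given a pure-injective cosilting $C$, one must produce a silting $T$ of finite type with ${}^{\perp_{>0}}T^+ = {}^{\perp_{>0}}C$; I expect this to hinge on the Grothendieck-heart description of \cite{li17} together with the noetherian classification of compactly generated t-structures via Thomason filtrations, which should supply enough projective finiteness on the silting side to recover a preimage $T$ from the compact generators of the cosilting coaisle under $S \mapsto S^\vee$.
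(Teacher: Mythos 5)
This theorem is imported verbatim from \cite[Theorems 3.3 and 3.8]{lihrbek}; the paper offers no proof of its own, so your proposal can only be judged on its own merits. The engine you identify --- the adjunction $\Hom(X,Y^+)\cong\Hom(X\otimes^{\mathbf{L}}_RY,W)\cong\Hom(Y,X^+)$, the resulting orthogonality flip $X\in{}^{\perp_{>0}}T^+\iff X^+\in T^{\perp_{>0}}$, and the compact duality $\Hom(S,X^+)=0\iff\Hom(S^\vee,X)=0$ --- is indeed the correct mechanism, and your treatment of part (i) is essentially sound. Two small points there: verifying $T^+\in{}^{\perp_{>0}}T^+$ reduces via the flip to $T^{++}\in T^{\perp_{>0}}$, which is not "the dual of $T\in T^{\perp_{>0}}$" but requires knowing that $T^{\perp_{>0}}=\Scal^{\perp_0}$ is definable and hence closed under double duals; and injectivity on equivalence classes is cleanest argued through the compact generators (recovering $\Scal$ up to generation from $\Scal^\vee$ via $S^{\vee\vee}\cong S$) rather than by asserting that the two coaisles are "in bijection".

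The genuine gap is surjectivity in (ii) and (iii). For (ii), the statement that one can "construct a bounded silting $T$ realizing the same filtration, for instance from shifted Koszul complex generators $K(I)[-n]$" is not a proof: the Koszul complexes generate the t-structure of the given bounded cosilting complex $C$, but no coproduct or simple combination of them is a silting object, and producing a bounded silting complex inducing a prescribed intermediate compactly generated t-structure is precisely the substantive content of the cited theorem (in \cite{lihrbek} it passes through the correspondence between intermediate compactly generated t-structures, the adjacent co-t-structures, and bounded silting complexes). For (iii) you explicitly concede that you do not have the surjectivity argument; reducing to "cosilting of cofinite type" via Theorem~\ref{noeth_coftype} is the right first step, but the existence of a finite-type silting preimage for an arbitrary (not necessarily bounded) such cosilting object over a noetherian ring is again the hard half of the cited result. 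As written, the proposal establishes part (i) and the well-definedness/injectivity halves of (ii) and (iii) only.
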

\begin{definition}
    Let $T(\mm)$ be a silting object of finite type in $\Der(R_\mm)$ for each $\mm \in \mSpec(R)$, $C(\mm) = T(\mm)^+$ the cosilting object of cofinite type obtained via Theorem~\ref{thm_duality}, which further corresponds to a non-degenerate Thomason filtration $\mathbb{X}(\mm)$ in $\Spec(R_\mm)$ via Proposition~\ref{prop_nondegen_cosilting}. The family $\{T(\mm) \mid \mm \in \mSpec(R)\}$ is said to be \emph{compatible} if the family $\{\mathbb{X}(\mm) \mid \mm \in \mSpec(R)\}$ is compatible. We say that two compatible families $\{T(\mm) \mid \mm \in \mSpec(R)\}$ and $\{U(\mm) \mid \mm \in \mSpec(R)\}$ of silting objects of finite type are \emph{equivalent} if the silting objects $T(\mm)$ and $U(\mm)$ are equivalent for each $\mm \in \mSpec(R)$.
\end{definition}
\begin{lemma}\label{lem_loc_silt}
    Let $T \in \Der(R)$ be a silting object and $\pp \in \Spec(R)$. Then $T_\pp$ is a silting object in $\Der(R_\pp)$. If $T$ is of finite type in $\Der(R)$ then so is $T_\pp$ in $\Der(R_\pp)$.
\end{lemma}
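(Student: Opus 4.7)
The plan is to combine the two-variable adjunction Lemma~\ref{lem_adjunction}(ii) with the already-established t-structure localization Lemma~\ref{lemma_tstr_loc}. Write $(\Ucal,\Vcal) = (T^{\perp_{>0}}, T^{\perp_{\leq 0}})$ for the silting t-structure in $\Der(R)$ induced by $T$. The goal is to show that $T_\pp^{\perp_{>0}}$ (computed in $\Der(R_\pp)$) coincides with the localized aisle $\Ucal_\pp = \Ucal \cap \Der(R_\pp)$, and similarly $T_\pp^{\perp_{\leq 0}} = \Vcal^\pp$, after which the silting property follows immediately.

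The first step is the direct identification via adjunction. For every $Y \in \Der(R_\pp)$ and $i \in \Z$, Lemma~\ref{lem_adjunction}(ii) yields a natural isomorphism
$$\Hom_{\Der(R_\pp)}(T_\pp, Y[i]) \cong \Hom_{\Der(R)}(T, Y[i]).$$
Running this over $i>0$ and $i\leq 0$ gives, inside $\Der(R_\pp)$,
$$T_\pp^{\perp_{>0}} = T^{\perp_{>0}} \cap \Der(R_\pp) = \Ucal \cap \Der(R_\pp), \qquad T_\pp^{\perp_{\leq 0}} = T^{\perp_{\leq 0}} \cap \Der(R_\pp) = \Vcal \cap \Der(R_\pp).$$
By Lemma~\ref{lemma_tstr_loc}(i)-(ii), the right-hand sides are precisely the aisle $\Ucal_\pp$ and coaisle $\Vcal^\pp$ of a t-structure in $\Der(R_\pp)$. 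Therefore $(T_\pp^{\perp_{>0}}, T_\pp^{\perp_{\leq 0}})$ is a t-structure in $\Der(R_\pp)$, which is exactly the definition of $T_\pp$ being a silting object in $\Der(R_\pp)$.

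For the second assertion, assume $T$ is of finite type, i.e.\ there is a set $\Scal \subseteq \Der(R)^c$ with $\Ucal = T^{\perp_{>0}} = \Scal^{\perp_0}$. Since $-\otimes_R^{\mathbf{L}}R_\pp$ sends compact objects to compact objects, $\Scal_\pp = \{S_\pp \mid S \in \Scal\} \subseteq \Der(R_\pp)^c$. Applying Lemma~\ref{lem_adjunction}(ii) once more, for any $Y \in \Der(R_\pp)$ one has $\Hom_{\Der(R_\pp)}(S_\pp, Y) \cong \Hom_{\Der(R)}(S, Y)$, so $(\Scal_\pp)^{\perp_0}$ in $\Der(R_\pp)$ equals $\Scal^{\perp_0} \cap \Der(R_\pp) = \Ucal \cap \Der(R_\pp) = T_\pp^{\perp_{>0}}$. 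Hence $T_\pp$ is of finite type in $\Der(R_\pp)$.

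There is essentially no substantive obstacle; the only point that requires a moment of care is bookkeeping the adjunction correctly to see that restricting the orthogonals of $T$ to $\Der(R_\pp)$ really yields the orthogonals of $T_\pp$ computed inside $\Der(R_\pp)$, and then invoking Lemma~\ref{lemma_tstr_loc} to recognise the resulting pair as a genuine t-structure rather than just a pair of orthogonal subcategories.
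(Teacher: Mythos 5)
Your proof is correct and uses the same ingredients as the paper's (which simply dualizes Lemma~\ref{lem_cosilt_coloc}): the adjunction of Lemma~\ref{lem_adjunction}(ii) to identify the orthogonals of $T_\pp$ computed in $\Der(R_\pp)$ with the restrictions $\Ucal \cap \Der(R_\pp)$ and $\Vcal \cap \Der(R_\pp)$, and Lemma~\ref{lemma_tstr_loc} to recognize $(\Ucal_\pp,\Vcal^\pp)$ as a t-structure. If anything, your version is slightly more streamlined, since identifying both halves of the pair at once lets you appeal directly to the definition of a silting object instead of verifying the conditions of the dual of the characterization in Lemma~\ref{lem4.1}; the finite-type argument likewise matches the paper's (cf.\ the analogous step in Proposition~\ref{prop_cg_loc}(i)).
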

\begin{proof}
    The proof is completely dual to that of Lemma~\ref{lem_cosilt_coloc}. The second claim is straightforward to check.
\end{proof}

A silting object $T$ in $\Der(R)$ is \emph{$n$-term} for some $n \geq 0$ if $T$ is isomorphic in $\Der(R)$ to a complex of projective $R$-modules concentrated in degrees $-(n-1),-(n-2),\ldots,-1,0$. In view of Theorem~\ref{thm_duality}, if $T$ is an $n$-term silting object then $T^+$ is clearly an $n$-term cosilting object of $\Der(R)$.

For any maximal ideal $\mm$, let us denote $(-)^{+_\mm} = \RHom_{R_\mm}(-,W^\mm)$ and note that $W^\mm$ is an injective cogenerator for the category $\ModRm$. By the adjunction, we have for any $X \in \Der(R)$ the simple formula $(X^+)^\mm = (X_\mm)^{+_\mm}$.
\begin{theorem}\label{thm_silting}
    For any $n \geq 0$, there is a bijection
    $$\left \{ \begin{tabular}{ccc} \text{ $n$-term silting objects $T$ }  \\ \text{ in $\Der(R)$ up to equivalence} \end{tabular}\right \}  \xleftrightarrow{1-1}  \left \{ \begin{tabular}{ccc} \text{ Compatible families $\{T(\mm) \mid \mm \in \mSpec(R)\}$ } \\ \text{ of $n$-term silting objects up to equivalence} \end{tabular}\right \}.$$
    If $R$ is commutative noetherian, there is also a bijection
    $$\left \{ \begin{tabular}{ccc} \text{ Silting objects $T$ }  \\ \text{ in $\Der(R)$ of finite type } \\ \text{up to equivalence} \end{tabular}\right \}  \xleftrightarrow{1-1}  \left \{ \begin{tabular}{ccc} \text{ Compatible families $\{T(\mm) \mid \mm \in \mSpec(R)\}$ } \\ \text{ of silting objects of finite type } \\ \text{up to equivalence} \end{tabular}\right \}.$$
    Both bijections are induced by the assignment $T \mapsto \{T_\mm \mid \mm \in \mSpec(R)\}$.
\end{theorem}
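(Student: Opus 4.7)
The plan is to transport the cosilting bijection of Theorem~\ref{thm4.17} and its $n$-term corollary to the silting side via the duality functor $\Phi = (-)^+$ of Theorem~\ref{thm_duality}. I would first verify that $\Phi$ restricts appropriately to the classes in question: any $n$-term silting object $T$ is bounded, hence of finite type by Theorem~\ref{mv_bounded}, and representing it by projectives in degrees $-(n-1),\ldots,0$ shows $T^+ = \Hom_R(T,W)$ is represented by injectives in the corresponding window of degrees $0,\ldots,n-1$. Together with Theorem~\ref{thm_duality}(ii), this yields a bijection between equivalence classes of $n$-term silting objects and $n$-term cosilting objects of cofinite type. In the noetherian setting, Theorem~\ref{thm_duality}(iii) combined with Theorem~\ref{noeth_coftype} gives an analogous bijection between silting objects of finite type and cosilting objects of cofinite type (which there coincide with pure-injective cosilting objects).

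The key compatibility step is that $\Phi$ intertwines localization at $\mm$ with colocalization at $\mm$: the hom-tensor adjunction provides a natural isomorphism
$$(T_\mm)^{+_\mm} = \RHom_{R_\mm}(T\otimes_R^{\mathbf{L}} R_\mm, W^\mm) \cong \RHom_R(T,\RHom_R(R_\mm,W)) = (T^+)^\mm$$
for any $T \in \Der(R)$ and $\mm \in \mSpec(R)$. By the very definition of compatibility on the silting side, the family $\{T_\mm \mid \mm \in \mSpec(R)\}$ is compatible if and only if the cosilting family $\{(T^+)^\mm \mid \mm \in \mSpec(R)\}$ is compatible in the sense of Theorem~\ref{thm4.17}. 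Hence the assignment $T \mapsto \{T_\mm\}$ corresponds under $\Phi$ exactly to the assignment $C \mapsto \{C^\mm\}$ on the cosilting side, and composing with the first paragraph delivers both asserted bijections.

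The main obstacle is the implicit nature of the reverse direction. Starting from a compatible family $\{T(\mm) \mid \mm \in \mSpec(R)\}$, one passes to the compatible cosilting family $\{T(\mm)^+\}$ and produces the global cosilting object $D = \prod_{\mm \in \mSpec(R)}T(\mm)^+$ as in Theorem~\ref{thm4.17}. This $D$ is $n$-term in the first bijection (since products of injective $R$-modules remain injective and the fixed degree window is preserved) and pure-injective in the second (as a product of pure-injectives), so by the relevant part of Theorem~\ref{thm_duality} there is a silting object $T$ of the required kind, unique up to equivalence, with $T^+$ equivalent to $D$. Unlike in the cosilting setting, however, this $T = \Phi^{-1}(D)$ admits no direct construction from the local pieces $T(\mm)$; one only recovers $T$ through its dual, as recorded in Remark~\ref{rem_silting_coproduct}. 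Finally, combining the commutation identity above with the injectivity of $\Phi$ shows that $T_\mm$ is equivalent to $T(\mm)$ for each $\mm$, closing the argument.
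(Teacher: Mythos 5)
Your proposal follows the paper's proof essentially step for step: both transport the statement across the duality $\Phi=(-)^+$ of Theorem~\ref{thm_duality}, use the identity $(T_\mm)^{+_\mm}\cong(T^+)^\mm$ to intertwine localization with colocalization, and invoke Theorem~\ref{thm4.17} on the cosilting side plus the injectivity/surjectivity properties of $\Phi$ to obtain well-definedness, injectivity, and surjectivity of $T\mapsto\{T_\mm\}$. The reasoning and the supporting results cited are the same; your writeup merely spells out the first-paragraph preliminary bijections more explicitly.
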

\begin{proof}
    Let $T$ be a silting object in $\Der(R)$, $\mm \in \mSpec(R)$, $C = T^+$ and let $\{C(\mm) \mid \mm \in \mSpec(R)\}$ be the compatible family of cosilting objects corresponding to $C$ via Proposition~\ref{prop_nondegen_cosilting}. Note that we can choose $C(\mm) = C^\mm$ for all $\mm \in \mSpec(R)$. By the formula above, we see that $C(\mm) = (T^+)^\mm = (T_\mm)^{+_\mm}$. Together with Lemma~\ref{lem_loc_silt}, this yields that $\{T_\mm \mid \mSpec(R)\}$ is a compatible family of silting objects and so both the assignment is well-defined. By Theorem~\ref{thm_duality}, the assignment $(-)^+$ is injective on equivalence classes on silting objects, and therefore so is the assignment $T \mapsto \{T_\mm \mid \mm \in \mSpec(R)\}$.

    It remains to show that the assignment $T \mapsto \{T_\mm \mid \mm \in \mSpec(R)\}$ is surjective. Let $\{T_\mm \mid \mm \in \mSpec(R)\}$ be a compatible family of silting objects of finite type and let $C(\mm) = T(\mm)^+ = T(\mm)^{+_\mm}$ for each $\mm \in \mSpec(R)$. Then $\{C(\mm) \mid \mm \in \mSpec(R)\}$ is a compatible family of cosilting objects of cofinite type, so there is a corresponding cosilting object $C \in \Der(R)$ via Theorem~\ref{thm4.17}. If either $C$ is $n$-term, or $R$ is commutative noetherian then Theorem~\ref{thm_duality} yields a silting object $T$ of finite type such that $T^+$ is equivalent to $C$ as a cosilting object. Now for reach $\mm \in \mSpec(R)$, $(T_\mm)^{+_\mm} = (T^+)^{\mm}$ is a cosilting object in $\Der(R_\mm)$ equivalent to $C(\mm)$. Therefore, Theorem~\ref{thm_duality} again implies that $T_\mm$ is equivalent to $T(\mm)$ as silting object for each $\mm \in \mSpec(R)$.
\end{proof}
\begin{remark}\label{rem_silting_coproduct}
    In view of Theorem~\ref{thm4.17}, it would be tempting to express the converse assignment of the bijection of Theorem~\ref{thm_silting} in terms of the coproduct $\bigoplus_{\mm \in \mSpec(R)}T(\mm)$. However, given a silting object $T$ the coproduct $\bigoplus_{\mm \in \mSpec(R)}T_\mm$ is often not a silting object anymore, see \cite[Remark 2.9]{tr14}. It is instructive to see where the proof of Lemma~\ref{lem_cos_prod} breaks in the dual setting --- the silting aisle $T^{\perp_{>0}}$ is usually not closed under taking colocalization. This is in contrast with cosilting coaisles, which are closed both under localization and colocalization, which is used in an essential way in obtaining results of Section~\ref{S:cosilting}.
\end{remark}
\bibliographystyle{abbrv}
\bibliography{bibitems}

\begin{thebibliography}{10}

\bibitem{ajs10}
L.~Alonso~Tarr\'{\i}o, A.~Jerem\'{\i}as~L\'{o}pez, and M.~Saor\'{\i}n.
\newblock Compactly generated {$t$}-structures on the derived category of a
  {N}oetherian ring.
\newblock {\em J. Algebra}, 324(3):313--346, 2010.

\bibitem{hr17}
L.~Angeleri~H\"{u}gel and M.~Hrbek.
\newblock Silting modules over commutative rings.
\newblock {\em Int. Math. Res. Not. IMRN}, (13):4131--4151, 2017.

\bibitem{lihrbek}
L.~Angeleri~H{\"u}gel and M.~Hrbek.
\newblock Parametrizing torsion pairs in derived categories.
\newblock {\em arXiv preprint arXiv:1910.11589}, 2019.

\bibitem{li16}
L.~Angeleri~H\"{u}gel, F.~Marks, and J.~Vit\'{o}ria.
\newblock Silting modules.
\newblock {\em Int. Math. Res. Not. IMRN}, (4):1251--1284, 2016.

\bibitem{li17}
L.~Angeleri~H\"{u}gel, F.~Marks, and J.~Vit\'{o}ria.
\newblock Torsion pairs in silting theory.
\newblock {\em Pacific J. Math.}, 291(2):257--278, 2017.

\bibitem{li14}
L.~Angeleri~H\"{u}gel and M.~Saor\'{\i}n.
\newblock t-{S}tructures and cotilting modules over commutative noetherian
  rings.
\newblock {\em Math. Z.}, 277(3-4):847--866, 2014.

\bibitem{an14}
B.~Antieau.
\newblock A local-global principle for the telescope conjecture.
\newblock {\em Adv. Math.}, 254:280--299, 2014.

\bibitem{ab}
J.~T. Arnold and J.~W. Brewer.
\newblock Commutative rings which are locally {N}oetherian.
\newblock {\em J. Math. Kyoto Univ.}, 11:45--49, 1971.

\bibitem{bf11}
P.~Balmer and G.~Favi.
\newblock Generalized tensor idempotents and the telescope conjecture.
\newblock {\em Proc. Lond. Math. Soc. (3)}, 102(6):1161--1185, 2011.

\bibitem{bh20}
S.~Bazzoni and M.~Hrbek.
\newblock Definable coaisles over rings of weak global dimension at most one.
\newblock {\em Publ. Mat.}, 65(1):165--241, 2021.

\bibitem{bs17}
S.~Bazzoni and J.~Šťovíček.
\newblock Smashing localizations of rings of weak global dimension at most one.
\newblock {\em Adv. Math.}, 305:351--401, 2017.

\bibitem{be2000}
A.~Beligiannis.
\newblock Relative homological algebra and purity in triangulated categories.
\newblock {\em J. Algebra}, 227(1):268--361, 2000.

\bibitem{BBD}
A.~A. Be\u{\i}linson, J.~Bernstein, and P.~Deligne.
\newblock Faisceaux pervers.
\newblock In {\em Analysis and topology on singular spaces, {I} ({L}uminy,
  1981)}, volume 100 of {\em Ast\'{e}risque}, pages 5--171. Soc. Math. France,
  Paris, 1982.

\bibitem{br17}
S.~Breaz and F.~Pop.
\newblock Cosilting modules.
\newblock {\em Algebr. Represent. Theory}, 20(5):1305--1321, 2017.

\bibitem{ca1956}
H.~Cartan and S.~Eilenberg.
\newblock {\em Homological algebra}.
\newblock Princeton University Press, Princeton, N. J., 1956.

\bibitem{gp08}
G.~Garkusha and M.~Prest.
\newblock Torsion classes of finite type and spectra.
\newblock In {\em {$K$}-theory and noncommutative geometry}, EMS Ser. Congr.
  Rep., pages 393--412. Eur. Math. Soc., Z\"{u}rich, 2008.

\bibitem{gt}
R.~G\"{o}bel and J.~Trlifaj.
\newblock {\em Approximations and endomorphism algebras of modules. {V}olume
  1}, volume~41 of {\em De Gruyter Expositions in Mathematics}.
\newblock Walter de Gruyter GmbH \& Co. KG, Berlin, extended edition, 2012.
\newblock Approximations.

\bibitem{ho71}
W.~Heinzer and J.~Ohm.
\newblock Locally noetherian commutative rings.
\newblock {\em Trans. Amer. Math. Soc.}, 158:273--284, 1971.

\bibitem{hps}
M.~Hovey, J.~H. Palmieri, and N.~P. Strickland.
\newblock Axiomatic stable homotopy theory.
\newblock {\em Mem. Amer. Math. Soc.}, 128(610):x+114, 1997.

\bibitem{hr20}
M.~Hrbek.
\newblock Compactly generated t-structures in the derived category of a
  commutative ring.
\newblock {\em Math. Z.}, 295(1-2):47--72, 2020.

\bibitem{hn20}
M.~Hrbek and T.~Nakamura.
\newblock Telescope conjecture for homotopically smashing t-structures over
  commutative noetherian rings.
\newblock {\em J. Pure Appl. Algebra}, 225(4):106571, 13, 2021.

\bibitem{hs20}
M.~Hrbek and J.~Šťovíček.
\newblock Tilting classes over commutative rings.
\newblock {\em Forum Math.}, 32(1):235--267, 2020.

\bibitem{ke1994}
B.~Keller.
\newblock A remark on the generalized smashing conjecture.
\newblock {\em Manuscripta Math.}, 84(2):193--198, 1994.

\bibitem{kv1988}
B.~Keller and D.~Vossieck.
\newblock Aisles in derived categories.
\newblock volume~40, pages 239--253. 1988.
\newblock Deuxi\`eme Contact Franco-Belge en Alg\`ebre (Faulx-les-Tombes,
  1987).

\bibitem{kr2000}
H.~Krause.
\newblock Smashing subcategories and the telescope conjecture---an algebraic
  approach.
\newblock {\em Invent. Math.}, 139(1):99--133, 2000.

\bibitem{la19}
R.~Laking.
\newblock Purity in compactly generated derivators and t-structures with
  {G}rothendieck hearts.
\newblock {\em Math. Z.}, 295(3-4):1615--1641, 2020.

\bibitem{la18}
R.~Laking and J.~Vit\'{o}ria.
\newblock Definability and approximations in triangulated categories.
\newblock {\em Pacific J. Math.}, 306(2):557--586, 2020.

\bibitem{ma18}
F.~Marks and J.~Vit\'{o}ria.
\newblock Silting and cosilting classes in derived categories.
\newblock {\em J. Algebra}, 501:526--544, 2018.

\bibitem{ne1992}
A.~Neeman.
\newblock The chromatic tower for {$D(R)$}.
\newblock {\em Topology}, 31(3):519--532, 1992.
\newblock With an appendix by Marcel B\"{o}kstedt.

\bibitem{ni19}
P.~Nicol\'{a}s, M.~Saor\'{\i}n, and A.~Zvonareva.
\newblock Silting theory in triangulated categories with coproducts.
\newblock {\em J. Pure Appl. Algebra}, 223(6):2273--2319, 2019.

\bibitem{ps18}
C.~Psaroudakis and J.~Vit\'{o}ria.
\newblock Realisation functors in tilting theory.
\newblock {\em Math. Z.}, 288(3-4):965--1028, 2018.

\bibitem{Ra1984}
D.~C. Ravenel.
\newblock Localization with respect to certain periodic homology theories.
\newblock {\em Amer. J. Math.}, 106(2):351--414, 1984.

\bibitem{ro08}
R.~Rouquier.
\newblock Dimensions of triangulated categories.
\newblock {\em J. K-Theory}, 1(2):193--256, 2008.

\bibitem{ss20}
M.~Saor{\'\i}n and J.~Šťovíček.
\newblock $ t $-structures with {G}rothendieck hearts via functor categories.
\newblock {\em arXiv preprint arXiv:2003.01401}, 2020.

\bibitem{ssv17}
M.~Saor{\'\i}n, J.~Šťovíček, and S.~Virili.
\newblock $ t $-structures on stable derivators and {G}rothendieck hearts.
\newblock {\em arXiv preprint arXiv:1708.07540}, 2017.

\bibitem{st13}
G.~Stevenson.
\newblock Support theory via actions of tensor triangulated categories.
\newblock {\em J. Reine Angew. Math.}, 681:219--254, 2013.

\bibitem{st14}
G.~Stevenson.
\newblock Derived categories of absolutely flat rings.
\newblock {\em Homology Homotopy Appl.}, 16(2):45--64, 2014.

\bibitem{tr14}
J.~Trlifaj and S.~\c{S}ahinkaya.
\newblock Colocalization and cotilting for commutative {N}oetherian rings.
\newblock {\em J. Algebra}, 408:28--41, 2014.

\bibitem{vakil}
R.~Vakil.
\newblock {\em Math 216: Foundations of Algebraic Geometry}.
\newblock Available at \url{
  http://math.stanford.edu/~vakil/216blog/FOAGjun1113public.pdf}.

\bibitem{xu1996}
J.~Xu.
\newblock {\em Flat covers of modules}, volume 1634 of {\em Lecture Notes in
  Mathematics}.
\newblock Springer-Verlag, Berlin, 1996.

\end{thebibliography}

\end{document}